\definecolor{darkgreen}{rgb}{0.5,0.25,0}
\definecolor{darkblue}{rgb}{0,0,1}
\definecolor{answerblue}{rgb}{0,0,0.75}
\newcommand*{\mailto}[1]{\href{mailto:#1}{\nolinkurl{#1}}}
\theoremstyle{plain}
\newtheorem{theorem}{Theorem}
\newtheorem{lemma}[theorem]{Lemma}
\newtheorem{corollary}[theorem]{Corollary}
\newtheorem{proposition}[theorem]{Proposition}
\newtheorem{definition}[theorem]{Definition}
\theoremstyle{plain}
\newtheorem{remark}[theorem]{Remark}
\numberwithin{theorem}{section}
\numberwithin{equation}{section}
\DeclareMathOperator*{\esssup}{\rm ess\,sup}
\def\mff{{\mathfrak f}}
\def\mx{{\bf x}}
\def\my{{\bf y}}
\def\pa{\partial}
\def\cal{\mathcal}
\let\mib=\boldsymbol
\def\C{\Bbb{C}}
\def\R{\Bbb{R}}
\def\N{\Bbb{N}}
\def\S{\Bbb{S}}
\def\mE{\mathscr{E}}
\def\mxi{{\mib \xi}}
\def\mx{{\mib x}}
\def\my{{\mib y}}
\def\malpha{{\mib \alpha}}
\def\mzeta{{\mib \zeta}}
\def\meta{{\mib \eta}}
\def\Rd{{{\bf R}^{d}}}
\def\Rd{{\mathbb{R}^{d}}}
\def\sgn{\operatorname{sgn}}
\definecolor{mycolor}{rgb}{0.122, 0.435, 0.698}
\newcommand{\cD}{\mathcal{D}}
\newcommand{\cDp}{\mathcal{D}^\prime}
\newcommand{\cS}{\mathcal{S}}
\newcommand{\cA}{\mathcal{A}}
\newcommand{\cB}{\mathcal{B}}
\newcommand{\cF}{\mathcal{F}}
\newcommand{\cI}{\mathcal{I}}
\newcommand{\bbH}{\Bbb{H}}
\newcommand{\bbK}{\Bbb{K}}
\newcommand{\bbT}{\Bbb{T}}
\newcommand{\norm}[1]{\left \lVert#1 \right\rVert}
\newcommand{\cM}{\mathcal{M}}
\newcommand{\cL}{\mathcal{L}}
\newcommand{\abs}[1]{\left\lvert#1\right\rvert}
\newcommand{\absb}[1]{\bigl\lvert#1\bigr\rvert}
\newcommand{\inn}[1]{\left\langle#1\right\rangle}
\newcommand{\innb}[1]{\bigl\langle#1\bigr\rangle}
\newcommand{\actionb}[2]{\bigl\langle #1, #2 \bigr\rangle}
\newcommand{\loc}{\operatorname{loc}}
\newcommand{\supp}{\operatorname{supp}}
\newcommand{\Div}{\operatorname{div}}
\newcommand{\diag}{\operatorname{diag}}
\newcommand{\seq}[1]{\left\{#1\right\}}
\newcommand{\seqb}[1]{\bigl\{#1\bigr\}}
\newcommand{\meas}{\operatorname{meas}}
\newcommand{\En}{\mathbf{1}}
\begin{document}

\title[Regularity of solutions to degenerate equations]
{On the regularity of entropy solutions to stochastic 
degenerate parabolic equations}

\author[Erceg]{M. Erceg}
\address[Marko Erceg]
{Faculty of Science, Department of Mathematics, 
University of Zagreb, 
Bijeni\v cka cesta 30, 
10000 Zagreb, Croatia}
\email{\mailto{maerceg@math.hr}}

\author[Karlsen]{K. H. Karlsen}
\address[Kenneth H. Karlsen]
{Department of Mathematics, 
University of Oslo, 
P.O. Box 1053 Blindern, 
0316 Oslo, Norway}
\email{\mailto{kennethk@math.uio.no}}

\author[Mitrovi\'{c}]{D. Mitrovi\'{c}}
\address[Darko Mitrovi\'{c}]
{University of Montenegro, 
Faculty of Mathematics, Cetinjski put bb,
81000 Podgorica, Montenegro}
\email{\mailto{darkom@ucg.ac.at}}

\subjclass[2020]{Primary: 35B65, 35R60; Secondary: 35Q83, 42B37}

\keywords{degenerate parabolic equation, stochastic forcing, 
entropy solution, kinetic solution, 
velocity averaging, regularity, fractional Sobolev space}

\begin{abstract}
We study the regularity of entropy solutions for quasilinear 
parabolic equations with anisotropic degeneracy and stochastic forcing. 
Building on previous works \cite{Tadmor:2006vn} 
and \cite{Gess:2018ab}, we establish space-time regularity under 
a non-degeneracy condition that does not require an assumption on the 
derivative of the symbol of the corresponding kinetic equation---a 
restriction imposed in earlier studies. This allows us to 
obtain regularity results for certain equations 
not accounted for by prior theory, albeit with reduced 
regularity exponents. Our approach uses a kinetic formulation with two 
transport equations---one of second order and 
one of first order---leveraging a form of ``parabolic regularity" 
inherent in these equations that was 
not utilized in previous studies.
\end{abstract}

\date{\today}

\maketitle


\section{Introduction}
We consider anisotropic degenerate parabolic equations 
subject to stochastic forcing in the form
\begin{equation}\label{d-p}
	\begin{split}
		\partial_t u + \Div_{\!\mx} \mff(u) 
		&= \Div_{\!\mx} ( a(u) \nabla_\mx u) 
		+B(u)\dot{W}(t)\quad \text{in} \ \cD'(\R_+^{d+1}),
	\end{split}
\end{equation}
where $(t, \mx) \in \R_+^{d+1}:=\R_+\times \R^d$, $W$ 
is a cylindrical Wiener process \cite{DaPrato:2014aa} with 
noise amplitude $B$, such that 
$B(u)\dot{W} = \sum_\ell b_\ell(u)\dot{W}_\ell$ 
for some real-valued functions $\seq{b_\ell}$ 
and independent real-valued Wiener processes 
$\seq{W_\ell}$ (we refer to Section \ref{sec:stoch-framework} 
for the precise assumptions), and $u: \Omega \times \R_+^{d+1} \to \R$ 
is the unknown function.

The nonlinear coefficients $\mff : \R\to \R^d$ and 
$a:\R \to \R^{d\times d}$ 
are given vector- and matrix-valued functions. 
We impose the following assumptions on $\mff,a$:
\begin{itemize}
	\item[({\bf a})] $\mff \in C^2(\R;\R^d)$ and 
	set $f:= \pa_\lambda \mff \in C^1(\R;\R^d)$.

	\item[({\bf b})] $a \in C^1(\R; \R^{d\times d})$ and 
	there exists a square-root matrix 
	$\sigma\in L^\infty(\R; \R^{d\times d})$ 
	such that $\sigma=\sigma^T$ and $a=\sigma^2$. Thus, for 
	$a(\lambda)=\seq{a_{ij}(\lambda)}_{i,j=1}^d$, we have
	\begin{align}
		\label{square-root}
		a_{ij}(\lambda)&
		=\sum\limits_{k=1}^d \sigma_{ik}(\lambda)
		\sigma_{kj}(\lambda),
		\\ \notag
		\bigl \langle a(\lambda) \mxi\,|\,\mxi \bigr \rangle & 
		=\sum\limits_{k=1}^d \left(\sum\limits_{i=1}^d 
		\sigma_{ik}(\lambda)\xi_i \right)^2
		=\abs{\sigma(\lambda)\mxi}^2 \geq 0, 
		\quad \mxi \in \R^d,
\end{align}
i.e., $a$ is a symmetric, positive 
semi-definite, matrix-valued function.
\end{itemize} 

The first-order term in \eqref{d-p} captures the 
nonlinear convection effects intrinsic to the 
physical process being modelled, whereas the second-order 
term represents the diffusion-driven nonlinear 
evolution of the unknown quantity $u$. 
This equation models various physical processes and appears 
in a wide range of applications, including flow in porous 
media (e.g., \cite{Espedal:2000sr,Nordbotten:2011aa}), 
sedimentation-consolidation processes \cite{Bustos:1999gb}, 
and finance \cite{Polidoro:2023aa}, among others.
The stochastic component of the equation \eqref{d-p} 
facilitates the modeling of systems characterized 
by random fluctuations and wave-propagating behavior.

The equation \eqref{d-p} is strongly 
\textit{degenerate parabolic}. 
Specifically, there may exist directions 
$\mxi \in \R^d \backslash \{0\}$ and 
states $\lambda \in \R$ such that 
$\bigl\langle a(\lambda) \mxi\,|\, \mxi \bigr\rangle=0$.
This reduces the equation locally to a hyperbolic equation, 
implying that solutions cannot be expected 
to remain regular for all times. 
Hence, ensuring the uniqueness of weak solutions 
necessitates the imposition of appropriate 
entropy conditions to manage potential discontinuities 
in the solutions. These conditions were initially 
introduced in the deterministic setting 
of \cite{Volpert:1969fk}. Partial uniqueness results for 
the initial value (Cauchy) problem were 
established in \cite{Volpert:1969fk,Wu:1989ve} 
(see also \cite{Wu:2001fk}). A general uniqueness 
result was first proven in \cite{Carrillo:1999hq} 
for the isotropic diffusion case, 
where $a(\cdot)$ is a scalar function. Subsequently, 
uniqueness was established in \cite{Chen:2003td} for the 
general anisotropic diffusion case considered in this work.  

A general well-posedness theory for the stochastic 
case were first examined in \cite{Feng:2008ul} and 
\cite{Debussche:2010fk} for first order equations. 
An extension of this theory to cover degenerate 
second-order operators, following the approach 
in \cite{Chen:2003td}, can be found 
in \cite{Debussche:2016aa,Gess:2018ab}. 
We note that while the deterministic 
setting in \cite{Chen:2003td} applies to $\R^d$, 
the stochastic framework of \cite{Debussche:2016aa,Gess:2018ab} is 
restricted to the torus $\mathbb{T}^d$. 
For more detailed discussions of the well-posedness 
theory, see Section \ref{sec:stoch-framework}.

Given a convex (entropy) function $S \in C^2(\R)$, define 
the vector- and matrix-valued entropy fluxes 
$Q_S: \R \rightarrow \R^d$, $R_S: \R \rightarrow \R^{d\times d}$  
by $Q_S'(u) = S'(u)f(u)$, $R_S'(u) = S'(u)a(u)$. 
We call $(S, Q_S,R_S)$ an \emph{entropy/entropy-flux triple} 
and write $(S, Q_S,R_S) \in \mE$. 
The entropy inequalities, which serve to differentiate 
between non-unique weak solutions, are expressed as follows:
\begin{equation}\label{eq:dp-entropy-ineq}
	\begin{split}
		&\partial_t S(u) + \Div_{\!\mx} Q_S(u) 
		+\Div_{\!\mx} \Div_{\!\mx} R_S(u)
		\\ & \quad 
		\le -S''(u)\sum_{i,j=1}^d a_{ij}(u)
		\partial_{x_i}u\partial_{x_j}u
		+\sum_{\ell \ge 1} S'(u) b_\ell(u)
		\, \dot{W}_\ell+\frac{1}{2} S''(u) B^2(u),
	\end{split}
\end{equation}
a.s.~in $\cD'_{t,\mx}:=\cD'(\R_+^{d+1})$, 
$\forall (S,Q_S,R_S) \in \mE$. The first term on 
the right-hand side can be more compactly 
writtten as $-S''(u)\abs{\sigma(u)\nabla_{\!\mx}u}^2$. 
In this context, we refer to $u$ as an 
\textit{entropy solution} of \eqref{d-p}. 
To ensure that this notion is well-defined, we require $u$ 
and $\sigma(u) \nabla_{\!\mx} u$ to be sufficiently 
integrable. Specifically, we assume 
$u \in L^p_\omega(L^\infty_t L^p_x)$ for $p \in [2, \infty)$, 
potentially with $L^p_x$ replaced by a suitable 
weighted space, depending on the assumptions on the coefficients 
$\mff, a, B$. Additionally, we assume that $\sigma(u) \nabla_{\!\mx} u 
\in L^2_{\omega,t,\mx}$ (see Section 
\ref{sec:stoch-framework} for details).

As initially observed in \cite{Lions:1994qy} within the 
deterministic context, the satisfaction of all entropy 
inequalities can be equivalently expressed through 
a second-order kinetic equation. 
This kinetic formulation has been utilized for the analysis of 
well-posedness in \cite{Chen:2003td} and \cite{Gess:2018ab}. 
When incorporating the stochastic forcing term, as first done 
in \cite{Debussche:2010fk} (when $a\equiv 0$), this 
equation may be written as 
\begin{equation}\label{diffusive-intro}
	\begin{split}
		& \pa_t h+\sum_{i=1}^d\pa_{x_i}
		\bigl( f_i(\lambda) h \bigr)
		-\sum_{i,j=1}^d\pa_{x_i x_j}^2
		\bigl(a_{ij}(\lambda) h \bigr)
		\\ & \qquad\quad 
		+ \sum_{\ell \ge 1} b_\ell(\lambda)
		\pa_\lambda h \, \dot{W}_\ell(t)
		= \pa_\lambda \left(\tfrac{1}{2}B^2(\lambda)
		\pa_\lambda h \right) 
		+\pa_\lambda\zeta, 
	\end{split}
\end{equation}
a.s.~in $\cD'_{t,\mx,\lambda}:=\cD'(\R_+^{d+1}\times \R)$. 
Here, $h=\En_{\lambda<u(t,\mx)}$ and $B^2=\sum_\ell b_\ell^2$. 
The term $\zeta = \zeta(t, \mx, \lambda)$ is 
a random variable valued in the space of non-negative, 
locally finite measures, denoted as $\zeta \in 
\cM_{\loc}(\R_+^{d+1} \times \R)$ 
(see Section \ref{sec:stoch-framework}).

Since the foundational work \cite{Lions:1994qy}, it has been 
known that velocity averaging lemmas for kinetic 
equations are effective tools for 
establishing compactness and regularity properties of entropy 
solutions to hyperbolic and mixed hyperbolic-parabolic equations. 
These lemmas demonstrate that, under certain conditions, 
the $\lambda$-averages 
$\overline{h}(t,\mx):=\int h(t,\mx,\lambda)\rho(\lambda)
\,d\lambda$, $\rho \in C^1_c(\R)$, 
exhibit enhanced properties---often characterised in 
terms of strong compactness of sequences of solutions or 
bounds in fractional Sobolev or Besov spaces. 
The area of velocity averaging has a long 
and rich history, with pioneering contributions 
such as \cite{Agoshkov:84,Golse-etal:88} in the $L^2$ setting,
followed by extensions to the $L^p$ ($p > 1$) framework in 
\cite{Bezard:94,DiPerna-etal:91}. More recent works, such 
as \cite{Tadmor:2006vn} and 
\cite{Arsenio:2021aa,Jabin:2022aa}, 
further explore these concepts and 
provide a list of relevant references.

The original work \cite{Lions:1994qy} established a 
velocity averaging result for \eqref{diffusive-intro}
when $B\equiv 0$ and 
demonstrated the $L^1_{\loc}$ compactness of sequences 
of bounded entropy solutions to \eqref{d-p}. This compactness 
result holds under the non-degeneracy condition:
\begin{equation}\label{lp-111-intro}
	\sup\limits_{(\xi_0,\mxi) \in \S^d} 
	\meas\Bigl\{ \lambda \in I  \mid 
	\xi_0+\bigl\langle \mff'(\lambda) \mid \mxi\bigr \rangle=0, 
	\, \bigl\langle a(\lambda)\mxi 
	\mid \mxi \bigr\rangle=0\Bigr\}=0,
\end{equation}
for a compact set $I \subset \R$. Here, $\S^d \subset \R^{d+1}$ 
represents the unit sphere, i.e., the set of points 
$(\xi_0,\mxi)\in\R^{d+1}$ such that $\abs{(\xi_0,\mxi)}=1$. 
Recently, \cite{Erceg:2023aa} extended this result 
by employing alternative techniques based on microlocal 
defect measures, applying them to degenerate parabolic equations 
akin to \eqref{d-p}, but incorporating a 
heterogeneous convection flux $\mff(t,\mx,u)$. 
We refer to \cite{Erceg:2023aa} additional references 
(see also \cite{Nariyoshi:2020aa}).

The work \cite{Tadmor:2006vn} 
was the first to establish quantitative  estimates---more 
precisely $W^{s,p}_{\loc}$ estimates---for entropy 
solution to degenerate parabolic equations like \eqref{d-p} 
with $B\equiv 0$. An extension of these estimates to 
equations with a stochastic forcing 
term $B(u)\dot{W}$ can be found \cite{Gess:2018ab}. 
In \cite{Tadmor:2006vn}, fractional Sobolev regularity of 
entropy solutions is derived by introducing a novel velocity 
averaging lemma for higher-order kinetic equations, such 
as \eqref{diffusive-intro}. However, this lemma is applied under 
more stringent conditions than those specified in \eqref{lp-111-intro}.  
The conditions in \cite[(2.19) \& (2.20)]{Tadmor:2006vn} 
consist of two parts, which we will now recall, appropriately 
adapted to the notation of this paper. Consider the set
$$
D_{\cL}(\xi_0,\mxi;\delta)
:=\Bigl\{ \lambda \in I \mid
\abs{\xi_0+\bigl\langle \mff’(\lambda)
\mid \mxi \bigr\rangle}^2
+\bigl\langle a(\lambda)\mxi \mid \mxi
\bigr\rangle^2 \leq\delta^2\Bigr\},
$$
where $I \subset\subset \R$. 
The first condition \cite[(2.19)]{Tadmor:2006vn} 
asserts that there exist $\alpha, \beta > 0$ such that for every 
$J \gtrsim 1$ and $\delta > 0$, the following holds:
\begin{equation}\label{c-TT-intro}
	\sup_{\abs{(\xi_0,\mxi)}\sim J}
	\meas\bigl(D_{\cL}(\xi_0,\mxi;\delta)\bigr)
	\lesssim \left(\frac{\delta}{J^\beta}\right)^\alpha.
\end{equation}
Let $\cL(\xi_0,\mxi,\lambda):=i\bigl(\xi_0+\bigl\langle \mff’(\lambda)\mid
\mxi \bigr\rangle\bigr)+\bigl\langle a(\lambda)\mxi \mid \mxi\bigr\rangle$ 
denote the symbol of the kinetic equation \eqref{diffusive-intro} 
evaluated at $(\xi_0,\mxi,\lambda)$. Then condition \eqref{c-TT-intro} 
quantifies the measure of the set of $\lambda$ for 
which $\abs{\cL(\xi_0,\mxi,\lambda)}^2 \leq \delta^2$ 
with $\abs{(\xi_0,\mxi)} \sim J$. The second condition 
\cite[(2.20)]{Tadmor:2006vn} imposes a requirement 
also on the $\lambda$-derivative of the 
symbol, $\cL_\lambda(\xi_0,\mxi,\lambda)$. 
Specifically, there exist 
$\kappa \geq 0$ and $\mu \in [0,1]$ such that
\begin{equation}\label{c-TT-Lder-intro}
	\sup_{\abs{(\xi_0,\xi)} \sim J}
	\, \sup_{\lambda \in D_{\cL}(\xi_0,\mxi;\delta)}
	\abs{\cL_\lambda(\xi_0,\mxi,\lambda)}
	\lesssim J^{\beta \kappa} \delta^\mu,
\end{equation}
which asks for a nondegeneracy condition on the 
derivatives $\mff''(\cdot)$ and $a’(\cdot)$. 
The assumptions in the stochastic forcing 
case are similar \cite{Gess:2018ab}.

The objective of this paper is to develop regularity 
estimates for entropy solutions without imposing a condition 
like \eqref{c-TT-Lder-intro} on the derivative of the symbol. 
In \cite[Remark 2.5]{Tadmor:2006vn}, the authors observe that since 
the order of $\cL(\xi_0,\mxi,\lambda)$ is at most 2, their second 
condition (2.20) (see \eqref{c-TT-Lder-intro}) is always 
satisfied with $\mu=0$ and $\beta \kappa=2$. 
Assuming $h \in L^p_{\loc}(\R_+^{d+1} \times \R)$ for $1 < p \leq 2$, the 
resulting averaging regularity is $\overline{h} 
\in W^{s,q}_{\loc}(\R_+^{d+1})$, with the regularity exponent 
$s$ being any number in the interval $(0, s_{\alpha,\beta})$ 
and the integrability exponent $q$ 
being a number in $(1, p)$, where
$$
s_{\alpha,\beta} = 2\theta_\alpha(\beta-1),
\quad
\theta_\alpha = \frac{\alpha}{\alpha + 2p’}, 
\quad 
\frac{1}{p}+\frac{1}{p'}=1.
$$
Clearly, under only condition (2.19) from \cite{Tadmor:2006vn}, 
the regularization effect is lost 
if $\beta=1$, which would correspond to 
a convection dominated regime. 
Consider, for example, the two-dimensional 
(deterministic) kinetic equation 
\begin{equation}\label{eq:intro-kinetic-2D}
	\pa_t h+\pa_{x_1} \bigl( \lambda \, h \bigr)
	-\pa_{x_2x_2}^2\bigl(\abs{\lambda} h \bigr)
	=\pa_\lambda\zeta,
\end{equation}
in which case the symbol becomes 
$\cL(\xi_0,\mxi,\lambda)=i(\xi_0+\lambda \xi_1)
+\abs{\lambda} \xi_2^2$. One can easily verify that condition 
\eqref{c-TT-intro} \cite[(2.19)]{Tadmor:2006vn} 
holds with $\beta=1$ and $\alpha=1$. Regarding the second condition 
\eqref{c-TT-Lder-intro} \cite[(2.20)]{Tadmor:2006vn}, it 
holds with $\beta=1$, $\kappa=2$, and $\mu=0$. However, 
in this case the above $s_{\alpha,\beta}$ collapses to zero, 
and there is no regularity. 

Our result encompasses degenerate equations 
like \eqref{eq:intro-kinetic-2D}, though with the trade-off 
of achieving regularity exponents that are 
lower than those predicted by \cite{Tadmor:2006vn}. 
The work \cite{Gess:2018ab} extends \cite{Tadmor:2006vn} to 
the stochastic setting of \eqref{d-p}; similarly, our results apply to 
a broader class of stochastic equations, albeit with lower regularity 
exponents than \cite{Gess:2018ab}. 
Moreover, we establish \textit{space-time} fractional Sobolev regularity 
in the stochastic case, whereas \cite{Gess:2018ab} 
only addresses spatial regularity. 

Our primary result is as follows (a more precise statement 
will be provided in Section \ref{sec:regularity}):

\begin{theorem}\label{thm:main-intro}
Suppose the coefficients $\mff$ and $a$ 
from the mixed hyperbolic-parabolic SPDE \eqref{d-p} satisfy 
assumptions ({\bf a}) and ({\bf b}) as well as the following 
non-degeneracy condition:
\begin{equation}\label{non-deg-stand}
	\sup\limits_{|(\xi_0,\mxi)|=1} 
	\meas\Bigl\{\lambda \in I \, \Big | \, 
	\big| \xi_0+\bigl\langle \mff'(\lambda) 
	\, | \, \mxi\bigr\rangle \big|^2
	+\bigl\langle a(\lambda)\mxi \,|\, \mxi\bigr\rangle 
	\leq \delta \Bigr\}\lesssim \delta^{\alpha},
\end{equation}  
for all $\delta \in (0, \delta_0)$, where $\alpha, \delta_0 > 0$ 
and $I \subset \R$ is a compact interval. 
Suppose that the noise amplitude $B$ fulfills 
the forthcoming conditions \eqref{eq:b-ell-def} 
and \eqref{eq:B-main-ass}. Then an entropy solution 
$u$ of \eqref{d-p} belongs a.s.~to 
the fractional Sobolev space $W^{s,q}_{\loc}(\R_+\times \R^d)$ 
for some regularity exponent $s \in (0,1)$ and 
integrability exponent $q \in (1,2)$.
\end{theorem}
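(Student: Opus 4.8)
The plan is to pass through the kinetic formulation \eqref{diffusive-intro} and reduce the regularity of $u$ to a velocity averaging estimate for the function $h=\En_{\lambda<u}$, exploiting the two-transport-equation structure advertised in the abstract. First I would rewrite the second-order kinetic equation by splitting the diffusion symbol $\langle a(\lambda)\mxi\mid\mxi\rangle=\abs{\sigma(\lambda)\mxi}^2$ so that the operator is seen simultaneously as a first-order transport equation in the drift direction $\xi_0+\langle\mff'(\lambda)\mid\mxi\rangle$ and as a second-order (heat-type) operator in the directions where $\sigma(\lambda)\mxi\ne 0$. The point of the new non-degeneracy condition \eqref{non-deg-stand}, compared with \eqref{c-TT-intro}--\eqref{c-TT-Lder-intro}, is that it only controls the measure of the resonant set $\{\lambda: \abs{\xi_0+\langle\mff'\mid\mxi\rangle}^2+\langle a(\lambda)\mxi\mid\mxi\rangle\le\delta\}$ on the \emph{unit} sphere, with no hypothesis on $\mff''$ or $a'$; so the averaging lemma must not differentiate the symbol in $\lambda$ and instead should gain regularity purely from the ``parabolic'' $\delta$-scaling in the diffusive directions together with the truncation/splitting in the non-resonant region.

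The key steps, in order, would be: (i) localize in $(t,\mx,\lambda)$ using a cutoff $\rho(\lambda)\chi(t,\mx)$, obtaining a compactly supported $h$ that solves \eqref{diffusive-intro} with a localization error in a negative Sobolev space, and record the a priori bounds $h\in L^p$ with $p\in(1,2)$ coming from $u\in L^p_\omega(L^\infty_t L^p_x)$ and $\sigma(u)\nabla_\mx u\in L^2$ (the latter giving the entropy dissipation measure the right integrability, and the kinetic measure $\zeta$ having finite expected mass); (ii) take the Fourier transform in $(t,\mx)$, split frequency space dyadically into $\abs{(\xi_0,\mxi)}\sim J$, and for each $J$ decompose the integral over $\lambda$ into the resonant set $D_{\cL}(\xi_0,\mxi;\delta)$ and its complement, optimizing over $\delta=\delta(J)$; (iii) on the complement, invert the symbol $\cL$ — using that either the real part (drift) or the positive real-part term (diffusion) is $\gtrsim\delta$ — to bound the corresponding piece of $\overline{h}$, where in the diffusive regime one gains an \emph{extra} factor from $\langle a(\lambda)\mxi\mid\mxi\rangle\gtrsim\delta$ being paired against $\abs{\mxi}^2\sim J^2$, i.e. the parabolic smoothing; (iv) on the resonant set, use \eqref{non-deg-stand} (rescaled from the unit sphere to radius $J$, which costs a power of $J$) together with the $L^p$ bound on $h$ and Hölder in $\lambda$ to bound that contribution by $(\text{meas})^{1/p'}\lesssim(\delta/J^{?})^{\alpha/p'}$; (v) balance the two contributions in $\delta$ to read off a Sobolev exponent $s>0$ and, by interpolation with the $L^p$ bound, an integrability exponent $q\in(1,p)\subset(1,2)$; (vi) finally, handle the stochastic and measure terms: the martingale term $\sum_\ell b_\ell\pa_\lambda h\,\dot W_\ell$ and the Itô correction $\pa_\lambda(\tfrac12 B^2\pa_\lambda h)$ are treated by taking expectations, using the Burkholder--Davis--Gundy inequality and the structural assumptions \eqref{eq:b-ell-def}, \eqref{eq:B-main-ass} on $B$ to keep these forcing terms in a space no worse than $\pa_\lambda(\text{bounded measure})$ in $(t,\mx,\lambda)$, which is exactly the type of right-hand side the averaging lemma tolerates; then a.s.\ regularity follows from the expected-$W^{s,q}$ bound.

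The main obstacle I expect is step (iii)--(v): making the averaging lemma work with \emph{only} \eqref{non-deg-stand} and no control on $\cL_\lambda$. Without a derivative bound on the symbol one cannot integrate by parts in $\lambda$ to smear the resonance, so the gain has to come entirely from the interplay between the dyadic frequency size $J$, the parabolic scaling of the diffusive part of $\cL$, and the polynomial measure bound $\delta^\alpha$; getting the powers of $J$ to line up so that the balanced exponent $s$ is strictly positive (rather than collapsing to $0$ as in the example \eqref{eq:intro-kinetic-2D} under the Tadmor--Tao framework) is the crux, and it is presumably why the resulting $s$ is smaller than in \cite{Tadmor:2006vn,Gess:2018ab}. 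A secondary technical point is ensuring the space-time (not merely spatial) character of the estimate survives the stochastic terms — this requires treating $\xi_0$ and $\mxi$ on equal footing in the dyadic decomposition and checking that the time-regularity coming from the transport structure is not destroyed by the Itô correction, which I would control via the BDG inequality in step (vi).
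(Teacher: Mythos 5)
Your high‐level plan has the right scaffolding — localize, Fourier transform in $(t,\mx)$, Littlewood--Paley in $\mxi=(\xi_0,\tilde\mxi)$, split into resonant/non‐resonant $\lambda$, balance $\delta$, handle the noise by BDG — and you correctly identify the crux: you cannot afford to differentiate the original symbol $\cL=i(\xi_0+\langle f\mid\mxi\rangle)+\langle a\mxi\mid\mxi\rangle$ in $\lambda$. But you leave that crux unresolved, and this is where the proposal has a genuine gap. You never actually construct the mechanism that replaces the derivative bound \eqref{c-TT-Lder-intro}: you say you will ``invert the symbol $\cL$'' in the non‐resonant region and gain from parabolic smoothing, but in the convection‐dominated region (drift $\gtrsim\delta$, diffusion small) this is exactly the place where inverting $\cL$ and moving $\pa_\lambda$ off the defect measure $\zeta$ forces $\cL_\lambda/\cL^2$ into the multiplier, i.e.\ precisely the step you correctly say you cannot take. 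The ``splitting of the diffusion symbol into directions'' you describe does not by itself produce a usable replacement.

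What the paper actually does is different in a structurally essential way, and it is missing from your sketch. It produces a \emph{second}, genuinely \emph{first‐order} kinetic equation \eqref{diffusive2-final-ver} by invoking the chain rule property \eqref{eq:chain-rule}: the second‐order term is rewritten as $\Div_{\!\mx}(\sigma(\lambda)\beta)$ with an auxiliary field $\beta=-\delta(\lambda-u)V$, $V=\Div_{\!\mx}\Sigma(u)\in L^2_{\omega}L^2_{\loc}$ (so a.s.\ $\beta\in L^2_{w\star}(K;\cM(\R)^d)$), see \eqref{regularity}--\eqref{eq:beta-def}. One then takes this first‐order equation, multiplies its Fourier transform by $(\xi_0'+\langle f\mid\tilde\mxi'\rangle)\Phi_J$ (where $\Phi_J$ is the cutoff to the region where the diffusion symbol is small, \eqref{eq:PsiJPhiJ}), and adds it to the Fourier transform of the second‐order equation divided by $|\mxi|^{1+r}$. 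The whole point of this sum is that $\widehat h$ gets multiplied by the new \emph{real, nonnegative, effectively homogeneous} symbol $\cL_J$ of \eqref{eq:SymbolLJ}, for which: (a) dividing is harmless, via the $L^1$-multiplier Lemmas \ref{L-ocjena-L1}--\ref{l-meas} rather than the truncation property of \cite{Tadmor:2006vn}; (b) the $\lambda$-derivative that does appear when handling $\pa_\lambda\widehat\zeta$ is controllable (estimate \eqref{eq:deriv-symb-tmp}) precisely because of the built‐in $\Phi_J$-truncation and $|\mxi|^{1-r}$ scaling; and (c) on the resonant set, the lower bound \eqref{eq:nondeg-estimate} reduces $\widetilde\Lambda_{2\delta}$ to the homogeneous set in \eqref{non-deg-stand} without losing powers of $J$, whereas your suggested ``rescale the unit‐sphere condition to radius $J$, costing a power of $J$'' applied to the non‐homogeneous $\cL$ exactly recreates the $\beta=1$ degeneracy that makes the Tadmor--Tao exponent collapse. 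Finally, your step (v) (``interpolation with the $L^p$ bound'') also diverges from what the paper does: the final interpolation is a $K$-functional argument between an $L^2(\Omega\times\R^{d+1})$ bound on the resonant piece and an $L^1(\Omega;H^{\gamma,1})\hookrightarrow L^1(\Omega;B^\eta_{1,1})$ bound on the non‐resonant piece, which is what produces $q_*\in(1,2)$; an $L^p$-based interpolation is a different route and is not justified in your sketch. In short, the proposal is a plausible outline of the \emph{problem}, but it omits the two‐equation construction that is the \emph{solution}; without it, steps (iii)--(v) as written would stall exactly where the Tadmor--Tao method requires \eqref{c-TT-Lder-intro}.
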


\begin{remark}
Concerning the smoothness of the diffusion matrix $a(\cdot)$, 
Theorem \ref{thm:main-intro} assumes 
a $C^1$ condition, see assumption ({\bf b}). 
However, a careful inspection of the proof (Step III) 
reveals that it suffices for $a(\cdot)$ to be (locally) 
Lipschitz continuous. In particular, the proof does not 
depend on the square-root matrix $\sigma$ 
being Lipschitz continuous. This indicates that 
equations such as \eqref{eq:intro-kinetic-2D} 
fall within the scope of Theorem \ref{thm:main-intro}. 
Similar considerations will also demonstrate that 
it suffices for the flux vector $\mff(\cdot)$ 
to be locally Lipschitz.
\end{remark}

\begin{remark}\label{rem:sobolev_intro}
The regularity results presented 
in \cite{Gess:2018ab,Tadmor:2006vn} 
are formulated in terms of Bessel potential spaces, 
which we will denote by $H^{\kappa,r}(\R^d)$ or 
$H^{\kappa,r}_{\loc}(\R^d)$ for the local versions, 
as per \cite{Hytonen:2016aa}. 
On the other hand, our result is expressed in terms
of the Sobolev-Slobodetskii spaces $W^{\kappa,r}(\R^D)$, 
where $D\in\N$. However, this distinction is irrelevant 
because, under the assumptions on $s$ and $q$ stated 
in Theorem \ref{thm:main-intro}, for any $\bar s>s$, we have
\begin{equation*}
	H^{\bar s,q}(\R^d) \hookrightarrow W^{s,q}(\R^d)
	\hookrightarrow H^{s,q}(\R^d),
\end{equation*}
see \cite[Corollary 14.4.25, Theorem 14.7.9, 
and p.~370]{Hytonen:2016ac}. 
Thus, the emphasis should be placed on the values 
of the parameters $s$ and $q$, rather than on 
the particular definition employed.
\end{remark}

Theorem \ref{thm:main-intro} 
assumes a quantitative form \eqref{non-deg-stand} 
of the non-degeneracy condition \eqref{lp-111-intro}, 
without requiring a derivative condition 
like \eqref{c-TT-Lder-intro}, which imposes a growth rate assumption 
on the $\lambda$-derivative of the symbol of \eqref{diffusive-intro} 
over Littlewood-Paley dyadic blocks (see \cite[(2.3)]{Gess:2018ab}, 
\cite[(2.19), (2.20)]{Tadmor:2006vn}). 
The assumption \eqref{non-deg-stand} 
avoids the restrictiveness and complexity of 
verifying \eqref{c-TT-Lder-intro}, a condition 
previously established for some special cases 
of \eqref{d-p} (see \cite[Subsection 2.4]{Gess:2018ab}, 
\cite[Corollaries 4.1-4.5]{Tadmor:2006vn}).
The symbol in \eqref{non-deg-stand} 
is homogeneous of degree 2. However, even if the square 
is removed from \eqref{non-deg-stand}, the result 
remains valid. For a discussion, see 
Section \ref{sec:discussions}.

Using the above theorem, we strengthen the deterministic 
result \cite[Corollary 4.5]{Tadmor:2006vn}, showing that this 
example demonstrates a regularization effect for 
all $n, \ell\ge 1$---not only when the diffusion exponent 
$n$ is at least $2\ell$, where $\ell$ is 
the exponent of the convection flux (see Corollary \ref{TT-ex}). A 
similar result appears in the work \cite{Nariyoshi:2020aa}. 
For the porous medium equation (i.e, \eqref{d-p} with $\mff,B\equiv 0$), 
optimal regularity has been established in \cite{Gess:2021aa,Gess:2020aa}. 
Although our regularity exponent $s$ is lower than in previous works, 
it applies more generally (see Theorem \ref{t-regularity} for details). 
In the stochastic setting, quantitative regularity estimates, 
even with reduced exponents, facilitate the stochastic compactness 
method, enabling convergence proofs at the entropy solution level 
and evading the intricate kinetic-level analysis required by noise terms, 
as exemplified in \cite{Karlsen:2022aa}.

Our proof bypasses the derivative condition 
\eqref{c-TT-Lder-intro} by leveraging a 
kinetic formulation with two transport equations---one of second 
order and one of first order. This formulation harnesses a form 
of ``parabolic regularity" linked to the positive 
semidefiniteness of the diffusion matrix $a$, 
a feature not used in prior works. 
For a related independent idea that has been used 
in the deterministic setting, see 
\cite{Nariyoshi:aa}. 

In \cite{Gess:2018ab,Tadmor:2006vn}, the positive 
semidefiniteness of $a$ was only moderately used (to verify 
the so-called truncation property). No explicit sign constraint on $a$ 
was required, as illustrated by examples like 
$a(\lambda)=\diag(\lambda,\lambda^3)$, 
where $\lambda \in \R$ (which adheres to the truncation property). 
In essence, the nonnegativity of the parabolic 
dissipation measure on the right-hand side 
of \eqref{eq:dp-entropy-ineq} is not crucial 
in these works. See \cite{Gess:2019ab} for a discussion 
on not assuming non-negativity of entropy defect 
measures in the context of scalar conservation laws.

We utilize the condition $a \geq 0$ 
(see \eqref{square-root}), which implies that 
$\sigma(u) \nabla_{\!\mx} u \in L^2_{\omega, t, \mx}$, 
to derive an additional kinetic equation.
Here, the second-order spatial differential 
operator in \eqref{diffusive-intro} 
is replaced by the first-order operator 
$\Div_{\!x} \bigl(\sigma(\lambda)\beta\bigr)$, where 
$\beta = \sigma(\lambda)\nabla_{\!x} h$ can informally 
be understood via the relation $\nabla_{\!\mx} h 
= -\delta(\lambda-u)\nabla_{\!\mx}u$. 
Therefore, the vector $\beta$ belongs a.s.~to the space 
$L^2_{w\star}(K;\cM(\R))$ (weak-star measurable 
and square-integrable mappings 
with values in the space of finite Radon measures), 
for any $K\subset\subset \R_+^{d+1}$. 
See Section \ref{sec:stoch-framework} 
for the rigorous interpretation, 
which involves the chain rule property 
from \cite{Chen:2003td}. 
Given these two kinetic equations, 
our overall proof strategy is similar to 
that in \cite{Tadmor:2006vn} 
and \cite{Gess:2018ab}, involving a Fourier 
space decomposition of the kinetic solution $h$. 
This approach examines the Fourier transform of 
$h$ in the region where the symbol of 
the kinetic equation is non-zero, while 
ensuring that the complementary region, 
controlled by the non-degeneracy assumption, 
remains sufficiently small. However, in
\cite{Gess:2018ab, Tadmor:2006vn} the authors had to 
simultaneously account for the flux and diffusion terms, 
which introduces significant technical challenges that 
could only be resolved by imposing additional 
non-degeneracy conditions (see the discussion above 
and compare, in particular, \eqref{c-TT-intro} 
and \eqref{c-TT-Lder-intro} with \eqref{non-deg-stand}). 

As a final element of the approach, we mention 
the formulation of the stochastic part of the kinetic equation. 
The last term on the left-hand side of \eqref{diffusive-intro} 
can be expressed formally as: 
$\sum_{\ell \ge 1} b_\ell(\lambda) 
\pa_\lambda h \dot{W}_\ell(t) 
= -\pa_t \cI(B\pa_\lambda h)$, 
where $\cI(\Phi)$ denotes the It\^{o} integral 
of a process $\Phi$ with respect to $W$. 
Hence, we interpret the kinetic 
equation \eqref{diffusive-intro} a.s.~in the 
sense of distributions with respect 
to both time $t$ and space-velocity 
variables $(\mx, \lambda)$. 
This approach represents the stochastic forcing term 
as a distributional time derivative. This allows us to 
directly bound terms like $\inn{\pa_t \cI(B\pa_\lambda h),
\varrho}_{\cD'_\lambda,\cD_\lambda}$, for 
$\varrho\in \cD_\lambda:=\cD(\R)$, in spaces like 
$L^2(\Omega; W^{-\nu,2}([0, T]; \bbH))$ 
for $\nu > 1/2$ and $T > 0$, provided that $\cI(B(u))$ 
takes values in a Hilbert space $\bbH$ 
(see Section \ref{sec:stoch-framework} for further details).  
This also enables a concurrent analysis of 
both spatial and temporal regularity.
The regularity analysis will further rely 
on expressing $\pa_t \cI(B\pa_\lambda h)$ 
in divergence form as $\pa_\lambda\pa_t \cI(B h)
-\pa_t \cI(\pa_\lambda B h)$.
Let us also mention that working in 
an unbounded domain ($\R^d$) 
introduces additional complications in handling 
these stochastic integral terms compared to the case 
of the torus \cite{Gess:2018ab}.

The remainder of this paper is organized as follows: 
Section \ref{sec:stoch-framework} presents 
the stochastic framework, 
solution concepts, and relevant kinetic equations. 
Section \ref{sec:regularity} states and 
proves the main regularity result.
Finally, in Section \ref{sec:discussions}, we 
discuss the regularity result and its 
underlying assumptions in the deterministic case.

\section{Background material}\label{sec:stoch-framework}

\subsection{Stochastic framework}

Recall that we consider degenerate parabolic SPDEs 
of the form \eqref{d-p} and kinetic SPDEs of the form 
\eqref{diffusive-intro}. Therefore we should further 
clarify the assumptions regarding the stochastic 
components of these equations. 
For background information on stochastic analysis and SPDEs, 
including the framework of It\^{o} integration 
with respect to cylindrical Wiener processes, we 
refer to \cite{DaPrato:2014aa}. For properties of Bochner spaces 
like $L^p(\Omega;X)= L^p\bigl(\Omega,\cF,P;X\bigr)$, 
where $X$ is a Banach space, we refer to \cite{Hytonen:2016aa}.

We start by fixing a stochastic basis 
\begin{equation}\label{eq:stoch-basis}
	\cS=\bigl(\Omega, \cF,\seq{\cF_t}_{t\ge 0},P\bigr),
\end{equation}
along with an expectation operator $E$. This basis 
comprises a complete probability space $(\Omega,\cF,P)$ and a 
right-continuous, complete filtration $\seq{\cF_t}_{t\ge 0}$. Now, 
introduce two separable Hilbert spaces $\bbH$ and $\bbK$, 
each with its own norm and inner product structure. 
We define a cylindrical Wiener process $W(t)$ on 
the stochastic basis \eqref{eq:stoch-basis}, 
taking values in $\bbK$, as follows:
\begin{equation*}
	W(t) = \sum_{\ell=1}^{\infty} 
	W_{\ell}(t) e_\ell, \quad t\ge 0,
\end{equation*}
where $\seq{W_{\ell}}_{\ell=1}^\infty$ 
are independent standard $\R$-valued Wiener processes, 
and $\seq{e_\ell}_{\ell=1}^\infty$ forms 
an orthonormal basis for $\bbK$. Notably, the series 
defining $W(t)$ does not converge in $\bbK$, but rather 
in a larger Hilbert space $\tilde{\bbK}\supset \bbK$, 
equipped with a Hilbert-Schmidt embedding.

Denote by $L_2(\bbK,\bbH)$ the space of 
Hilbert-Schmidt operators 
mapping from $\bbK$ to $\bbH$. We say that a 
process $\Phi:\Omega \times [0, T] \to L_2(\bbK,\bbH)$ 
is progressively measurable if it remains measurable with 
respect to the $\sigma$-algebra $\cF_t \times \cB([0,t])$ for 
all $t \in [0, T]$. For a progressively measurable 
process $\Phi\in L^2(\Omega \times [0, T];L_2(\bbK,\bbH))$, we 
define the \textit{It{\^o} stochastic integral} $\cI(\Phi)$:
\begin{equation}\label{eq:stoch-int}
	\cI(\Phi)(t) = \int_0^t \Phi(s) 
	\, dW(s), \quad t\in [0,T].
\end{equation}
Here, $\cI(\Phi)$ is a progressively 
measurable process in $L^2(\Omega \times [0, T];\bbH)$,  
which is also a continuous $\bbH$-valued martingale. 
It satisfies the integral identity
$$
\bigl( \cI(\Phi)(t), h \bigr)_{\bbH} 
=\sum_{\ell=1}^\infty \int_0^t
\bigl(\Phi_\ell(s), h\bigr)_{\bbH} \, dW_{\ell}(s),
\quad \Phi_\ell(\cdot):=\Phi(\cdot)e_\ell,
\quad h\in \bbH,
$$
where each term in the series is interpreted 
in the It{\^o} sense. This series converges 
in $L^2(\Omega,\cF_t,P;C[0, t])$ 
for every $t \in [0, T]$. As we proceed, the 
representative examples we have in mind are
\begin{equation}\label{eq:H-space}
	\bbH=L^2(\R^d), \quad \bbH=L^2(w_Nd\mx),
\end{equation}
where $L^2(w_Nd\mx)$ is a weighted $L^2$ space 
over $\R^d$, see \eqref{eq:Lp-weight} below. 
In fact, since our results are local, we can 
safely fix $\bbH$ to be precisely $L^2(\R^d)$, a 
choice we adopt from Subsection \ref{subsec:compact-supp} 
until the end of the paper. 

Our relevant choice of $\Phi$ is the operator-valued 
mapping $B(u)$, the nonlinear noise amplitude 
in the degenerate parabolic SPDE \eqref{d-p}. 
For each $u\in L^2(\R^d)$, 
we define $B(u)$ by its action on each $e_\ell$:
\begin{equation}\label{eq:b-ell-def}
	B(u)e_\ell:=b_\ell(t,\mx,u(\cdot)), 
	\qquad b_\ell\in C(\R_+^{d+1}\times \R), 
	\qquad \ell\in \N.
\end{equation}
We then obtain
\begin{equation}\label{eq:dp-infinite-Wiener}
	B(u)\, dW(t)=\sum_{\ell\ge1} 
	b_\ell(t,\mx,u)\, dW_\ell(t).
\end{equation}
We assume that the sequence $\seq{b_\ell}_{\ell\ge 1}$ 
satisfies the following condition (which is 
similar to \cite{Gess:2018ab}): there 
is a sequence $\seq{\alpha_\ell}_{\ell\geq1}$ of positive 
numbers that satisfies $\sum_{\ell \geq 1}
\alpha_\ell^2 < \infty$ such that
\begin{equation}\label{eq:B-main-ass}
	\abs{b_\ell(t,\mx,0)} + \abs{\nabla_\mx b_\ell(t,\mx,\lambda)}
	+\abs{\pa_\lambda b_\ell(t,\mx, \lambda)}\leq \alpha_\ell, 
\end{equation}
for all $(t,\mx,\lambda)\in \R_+\times \R^d\times \R$.
This condition implies that 
\begin{align}
	\label{eq:dp-def-noise-B}
	& B^2=B^2(t,\mx,\lambda):=\sum_{\ell\geq 1}
	\left(b_\ell(t,\mx,\lambda)\right)^2
	\lesssim 1+\abs{\lambda}^2,
	\\ &
	\label{eq:dp-noise-bk-reg}
	\sum_{\ell\geq 1}\abs{b_\ell(t,\mx,\lambda_1)
	-b_\ell(t,\my,\lambda_2)}^2
	\lesssim 
	\abs{\mx-\my}^2+\abs{\lambda_1-\lambda_2}^2,
\end{align}
for all $t\in \R_+$, $\mx,\my \in \R^d$, 
and $\lambda_1,\lambda_2\in \R$. 
We define $\pa_\lambda B$ analogously to $B$, utilizing 
the sequence $\seq{\pa_\lambda b_\ell}_{\ell \ge 1}$. 
The sequence $\seq{\pa_\lambda b_\ell}_{\ell \ge 1}$ 
complies with the condition
\begin{equation}\label{eq:dp-def-noise-pa-B}
	(\pa_\lambda B)^2
	=(\pa_\lambda B)^2(t,\mx,\lambda):=
	\sum_{\ell\geq 1}
	\left(\pa_\lambda b_\ell(t,\mx,\lambda)\right)^2
	\lesssim 1.
\end{equation}

The conditions \eqref{eq:dp-def-noise-B} and 
\eqref{eq:dp-noise-bk-reg} imposed 
on the noise operator $B$ ensure that 
it maps $\bbH$ to the space of 
Hilbert-Schmidt operators from $\bbK$ to $\bbH$, i.e., 
$\bbH \ni u \mapsto B(u)\in L_2(\bbK,\bbH)$. 
Consequently, for any progressively measurable 
process $u \in L^2(\Omega \times [0, T];\bbH)$, the 
stochastic integral $\cI(B(u))$ is a well-defined (martingale) 
process with values in $\bbH$.

\subsection{A priori estimates and the kinetic formulation}
\label{subsec:kinetic}
Let $u$ be an entropy solution of \eqref{d-p}. 
By applying the entropy inequality \eqref{eq:dp-entropy-ineq} 
with $S(u) = u^p$ for $p \geq 2$, and utilizing 
\eqref{eq:dp-def-noise-B} along with a standard martingale 
argument, we deduce that
\begin{equation}\label{eq:u-in-Lp}
	u \in L^p(\Omega;L^\infty([0,T];L^p(w_N d\mx))),
	\quad \forall p\in [2,\infty),\,\, T>0,
\end{equation}
where $L^p(w_Nd\mx)$ denotes the weighted
 $L^p$ space of functions $v: \R^d \to \R$ such that
\begin{equation}\label{eq:Lp-weight}
	\int_{\R^d} |v|^p\, w_N \,d\mx < \infty.
\end{equation}
An example of a weight function is
\begin{equation}\label{eq:weight-def}
	w_N(\mx)=(1+|\mx|^2)^{-N}, \quad N > d/2.
\end{equation}
This function is integrable on 
$\R^d$ and satisfies, for all $i, j = 1, \ldots, d$,
\begin{equation}\label{eq:weight-prop}
	\abs{\partial_{x_i} w_N(\mx)}
	\lesssim \frac{w_N(\mx)}{1 + |x|} \leq w_N(\mx),
	\quad 
	\abs{\partial_{x_i x_j}^2 w_N(\mx)}
	\lesssim \frac{w_N(\mx)}{1 + |x|^2} \leq w_N(\mx).
\end{equation}
Any function $w\ge0$ that meets these 
conditions can be used as a weight function. 

\begin{remark}
Suppose $B(0) = 0$. In this scenario, the standard $L^p$ 
spaces provide a natural framework for addressing \eqref{d-p} 
(on $\R^d$). Specifically, we can 
omit the weight $w_N$ and establish that 
$u \in L^p(\Omega; L^\infty(0,T;L^p(\R^d)))$ 
for all $p \in [2, \infty)$, provided the 
initial condition satisfies $u_0 \in 
L^\infty_\omega\left(L^2_\mx \cap L^\infty_\mx\right)$. 
Alternatively, the weight $w_N$ can be omitted by 
imposing a stronger condition on $B^2$ as 
$\abs{\mx}\to \infty$, namely, 
\begin{equation}\label{eq:B2-xdep}
	B^2(t,\mx,\lambda)\leq (b(\mx))^2
	\left(1+\abs{\lambda}^2 \right),
	\quad 
	b \in \bigl(L^2 
	\cap L^\infty\bigr)(\R^d),
\end{equation}
for $\mx \in \R^d$ and $\lambda \in \R$.
However, if these assumption do not hold, then 
weighted $L^p$ spaces become a more appropriate choice 
for addressing various well-posedness questions. 
For further discussion on this point, see 
\cite[Remark 5.3]{Frid:2021us}.	
\end{remark}

\begin{remark}
The claim \eqref{eq:u-in-Lp} is derived from a Gronwall-type 
energy argument that relies on a standard estimate 
obtained from the Burkholder-Davis-Gundy (BDG) inequality. 
Let us consider the weight-free case where $\bbH = L^2(\R^d)$. 
Assume that the function $B$ is compactly supported in 
$\mx$, or $B$ must depend on $\mx$ and satisfy the 
bound \eqref{eq:B2-xdep}. Alternatively, $B(0) \equiv 0$, which 
eliminates the “1-term” on the right-hand 
side of \eqref{eq:dp-def-noise-B}.
Set
\begin{align*}
	\cM(t):=\sum_{\ell\ge1}\int_0^t\int_{\R^d}
	b_\ell(s,\mx,u)\varrho(u)
	\, d\mx\,dW_\ell(s),
\end{align*}
for any continuous function $\varrho:\R\to \R$ with 
$\abs{\varrho(\lambda)}\lesssim 1+ \abs{\lambda}$.
Here, $u$ is a solution such that all the norms appearing 
below are finite. Applying the BDG inequality 
then gives ($\forall q\ge 2$):
\begin{align*}
	& E\sup_{t\in [0,T]}\abs{\cM(t)}^{\frac{q}{2}}
	\\ \notag 
	& \quad \lesssim_T\frac12
	E\left( \esssup_{t\in [0,T]} 
	\norm{u(t)}_{L^2(\R^d)}^{q}\right)
	+\int_0^T E \left(\norm{u(t)}_{L^2(\R^d)}^{q}
	\right) \,dt+1<\infty,
\end{align*}
which combined with a Gronwall-type argument yields 
\eqref{eq:u-in-Lp} (with $w_N\equiv 1$).
\end{remark}

As mentioned in the introduction, we 
will instead of the entropy formulation 
rely on the more refined ``kinetic" 
interpretation of the entropy inequalities 
\cite{Chen:2003td,Debussche:2010fk,Gess:2018ab,Lions:1994qy}. 
Define the mapping $\chi: \R^2 \to \R$ by
$$
\chi(\lambda, u) = \En_{\lambda < u} - \En_{\lambda < 0},
$$
which is known as a $\chi$-function. 
The one-sided variants 
of $\chi$ are defined by $\chi_+(\lambda, u)
:=\En_{\lambda < u}$ and $\chi_-(\lambda, u)
:=-\En_{\lambda \ge u}$.  
Notice that $\chi(\lambda, u)$ can be written as 
$\chi(\lambda, u) = \chi_+(\lambda, u) - \En_{\lambda < 0}$ 
or equivalently $\chi(\lambda, u) = \chi_-(\lambda, u)
+\En_{\lambda \ge 0}$. Moreover, the identity 
$\chi_+(\lambda, u) + \chi_-(\lambda, u) 
= \sgn(u - \lambda)$ holds. 
Another relevant function is
$\frac{1}{2} \left(\sgn(u-\lambda)
-\sgn(\lambda)\right)$, which can 
also be expressed in terms of $\chi_\pm$. 
The function
$$
h = h(\omega, t, \mx) := \chi_+(\lambda, u(\omega, t, \mx)) 
= \En_{\lambda < u(\omega, t, \mx)},
$$
where $u$ is an entropy solution of \eqref{d-p}, 
satisfies the kinetic equation \eqref{diffusive-intro}.
Similar kinetic equations hold 
for $\chi_-$ and $\chi$. 

The nonnegative, measure-valued 
random variable $\zeta$ on the right 
side of \eqref{diffusive-intro} is given by
\begin{equation}\label{eq:zeta-def}
	\zeta = m + n, \quad
	n = \delta(\lambda - u) \sum_{i, j = 1}^d 
	a_{ij}(u) \partial_{x_i} u \partial_{x_j} u,
\end{equation}
where $n$ represents the parabolic 
dissipation measure, and $m$ is referred to 
as the kinetic (entropy defect) measure. 
For further details, 
see \cite[Definition 2.2]{Chen:2003td} and 
\cite[Definition 2.1]{Gess:2018ab}, and 
the discussion in the following subsection.

\begin{definition}\label{def:kinetic-sol}
Given initial data $u_0 \in 
L^\infty\left(\Omega, \mathcal{F}_0; L^\infty(\R^d)\right)$, set 
$h_0 := \En_{\lambda < u_0}$. A measurable function 
$u: \Omega \times [0, T] \times \R^d \to \R$ is called 
a \textit{kinetic solution} of the Cauchy problem for 
the degenerate parabolic equation \eqref{d-p} if $u$ is 
a predictable, $L^2(w_N d\mx)$-valued process 
satisfying \eqref{eq:u-in-Lp} and 
there is a kinetic measure $m$ such that 
the subgraph $h := \En_{\lambda < u}$ obeys the kinetic 
equation \eqref{diffusive-intro}, along with 
the (weak) satisfaction of the 
initial condition $h(0)=h_0$.	
\end{definition}

Note that if $u$ is predictable, then $h$ 
is also predictable. Incorporating the weight 
$w_N$ from \eqref{eq:weight-def} 
into \eqref{eq:zeta-def}, we define 
$\zeta_N := w_N \zeta 
= w_N m + w_N n =: m_N + n_N$.  
It can be shown that
\begin{equation}\label{eq:zetaN-pbound}
	E\int_{[0,T]\times \R^d\times \R} 
	\abs{\lambda}^p \, \zeta_N(dt,d\mx,d\lambda)
	\le C, \qquad p\in [0,\infty),
\end{equation}
where $C$ depends on $T,N$ and $\norm{u}_{L^{p+2}\left(\Omega;
L^\infty\left(0,T;L^{p+2}(w_Nd\mx)\right)\right)}$.
Thus, the random variable
$\omega \mapsto \int_{[0, T] \times \R^d \times \R} 
\abs{\lambda}^p \, \zeta_N(dt, d\mx, d\lambda)$ 
belongs to $L^1(\Omega)$. 
One can improve this to $L^q(\Omega)$ for any 
finite $q \ge 1$. This follows from the bound
\begin{equation}\label{eq:moments}
	E\Bigl(\esssup_{t \in [0, T]} 
	\norm{u(t)}_{L^{p+2}(w_Nd\mx)}^r\Bigr) 
	+E\Bigl(\int_{[0, T] \times \R^d \times \R} 
	\abs{\lambda}^p \, \zeta_N(dt, d\mx, d\lambda)
	\Bigr)^{\frac{r}{p+2}} \le C,
\end{equation}
which holds provided  $u_0$ satisfies
$E \bigl(\norm{u_0}_{L^{p+2}(w_Nd\mx)}^r\bigr) < \infty$ 
for $r > p + 2$. The case $r = p + 2$ is 
covered by the definition of kinetic solution. 
The constant $C$ depends soley on $r,T,N$ (and $u_0$). 
This bound follows from \eqref{eq:dp-entropy-ineq}, 
see, e.g., \cite[Remark 5.8]{Frid:2021us}.

The well-posedness of kinetic solutions 
(for the Cauchy problem) of \eqref{d-p} is 
established in \cite{Chen:2003td,Gess:2018ab} (see 
\cite{Bendahmane:2004go} for entropy solutions), 
which also covers the case of purely $L^1$ initial data. 
The deterministic setting in 
\cite{Bendahmane:2004go,Chen:2003td} 
applies to $\R^d$, whereas the 
stochastic framework of \cite{Gess:2018ab} 
is restricted to the torus $\mathbb{T}^d$. 
However, by combining the arguments from 
\cite{Gess:2018ab} with those in \cite[Section 5]{Frid:2021us}, 
while keeping in mind \eqref{eq:weight-prop}, the well-posedness 
of kinetic solutions $u$ (with high 
integrability) can be extended to $\R^d$ using the 
weighted $L^p$ framework described above. 
The first-order case ($a \equiv 0$ in \eqref{d-p}) was 
initially studied in \cite{Feng:2008ul} (on $\R^d$ 
with entropy solutions) and \cite{Debussche:2010fk} 
(on $\bbT^d$ with kinetic solutions).

\medskip

The noise-related term in 
\eqref{diffusive-intro} can 
be formally interpreted as
$$
\sum_{\ell \ge 1} b_\ell(\lambda)
\pa_\lambda h \, \dot{W}_\ell
= B \pa_\lambda h \, \dot{W}
= \pa_t \cI(B \pa_\lambda h),
$$
where $\pa_t \cI(B \pa_\lambda h)$ denotes the 
distributional time derivative 
of an It\^{o} integral. 
To clarify, noting that $\pa_\lambda h 
= -\delta(\lambda - u)$ represents a nonnegative, 
finite (probability) measure and that $u$ is an 
entropy solution, we interpret $\cI(B \pa_\lambda h)$ 
as follows:
\begin{equation}\label{eq:I-B-pah}
	\innb{\cI(B \pa_\lambda h),
	\varrho}_{\cD’_\lambda, \cD_\lambda}
	= -\int_0^t (B \varrho)(u) \,dW(s)
	= -\cI\bigl((B\varrho)(u)\bigr),
	\quad \varrho \in \cD_\lambda,
\end{equation}
so that $\innb{\pa_t \cI(B \pa_\lambda h),
\varrho}_{\cD’_\lambda, \cD_\lambda}
=-\pa_t\cI\bigl((B\varrho)(u)\bigr)$ 
becomes the distributional 
time derivative of the It\^{o} integral of 
the process $\Phi=(B \varrho)(u)=B(u)\rho(u)$ 
(see \eqref{eq:stoch-int}). 
This $\Phi$ is defined using \eqref{eq:b-ell-def}, with 
each $b_\ell(u)$ multiplied by the 
scalar function $\varrho(u)$. Furthermore, we define 
the term $B^2\pa_\lambda h$ by
\begin{equation}\label{eq:zeta-B2-pah}
	\innb{B^2\pa_\lambda h,
	\varrho}_{\cD'_\lambda,\cD_\lambda}
	=-(B^2\varrho)(u)=-B^2(u)\varrho(u),
	\quad \varrho \in \cD_\lambda.
\end{equation}
The regularity analysis will further depend on 
expressing the stochastic term $\cI(B\pa_\lambda h)$ in 
divergence form as follows:
\begin{equation}\label{eq:I-B-pah-div}
	\cI(B \pa_\lambda h) 
	= \pa_\lambda \cI (B h) - \cI (\pa_\lambda B h)
	\quad \text{in $\cDp_\lambda$},
\end{equation}
where $\pa_\lambda B$ is well-defined and satisfies 
\eqref{eq:dp-def-noise-pa-B}.

Using the interpretations \eqref{eq:I-B-pah}, 
\eqref{eq:zeta-B2-pah}, and 
\eqref{eq:I-B-pah-div}, we reformulate 
the kinetic equation \eqref{diffusive-intro} as follows:
\begin{equation}\label{diffusive-intro-ver2}
	\begin{split}
		& \pa_t h+\sum_{i=1}^d\pa_{x_i} 
		\bigl( f_i(\lambda) h \bigr)
		-\sum_{i,j=1}^d\pa_{x_i x_j}^2
		\bigl(a_{ij}(\lambda) h \bigr)
		\\ & \qquad\quad
		=-\pa_t \cI(B\pa_\lambda h)
		+\pa_\lambda \bigl(\zeta
		+\tfrac12 B^2\pa_\lambda h\bigr),
		\\ & \qquad\quad
		=-\pa_\lambda\pa_t \cI (B h)
		+\pa_t \cI (\pa_\lambda B h)
		+\pa_\lambda \bigl(\zeta
		+\tfrac12 B^2\pa_\lambda h\bigr)
		\quad 
		\text{in $\cD'_{t,\mx,\lambda}$, a.s.}
	\end{split}
\end{equation}
This equation, which emphasizes the representation 
of the stochastic forcing term in 
\eqref{diffusive-intro} as the distributional 
time derivative of the It\^{o} integral, forms 
our starting point for deriving the regularity result. 
This approach was similarly utilized in \cite{Gess:2018ab} and, 
for a different purpose, in our prior work 
\cite{Erceg:2023ab}, which, in turn, drew inspiration 
from \cite{Langa:2003aa}.

Let $\cI(\Phi)$ be the It\^{o} 
integral of $\Phi$ as defined in \eqref{eq:stoch-int}. 
Note that as the time derivative map $\pa_t$ 
is continuous from  $C$ (and thus $L^\infty$) 
to $W^{-1,\infty}$ and $\cI(\Phi)$ 
belongs to $L^2(\Omega,\cF_t,P;C([0,t];\bbH))$ 
for every time $t\in [0,T]$, it follows that
\begin{equation}\label{eq:stoch-int-deriv}
	\pa_t \cI(\Phi)\in 
	L^2(\Omega,\cF_t,P;W^{-1,\infty}([0,t];\bbH)).
\end{equation}
We will require more refined bounds 
on $\partial_t I(\Phi)$ beyond the one 
given by \eqref{eq:stoch-int-deriv}.
In particular, by \cite[Lemma 2.1]{Flandoli:1995aa}
we have
$$
\cI(\Phi) \in L^q\bigl(\Omega; 
W^{\nu,q}([0,T];\bbH)\bigr), 
\quad q\ge 2, \,\, 
\nu < \frac{1}{2},
$$
and there exists a constant $C(q,\nu) > 0$ such that
\begin{equation}\label{eq:frac-Sob-Ito-ver0}
	E\left[ \norm{\cI(\Phi)}_{W^{\nu,q}([0,T];\bbH)}^q\right] 
	\leq C(q,\nu) E\left[
	\int_0^T \norm{\Phi(t)}_{L_2(\bbK,\bbH)}^q \,dt \right].
\end{equation}
Here, $W^{\nu,q}([0,T];\bbH)$ denotes a vector-valued 
Sobolev-Slobodetskii space, as defined in 
\cite[Definition 2.5.16]{Hytonen:2016aa}. However, in 
Section \ref{sec:regularity}, we will transition 
to utilizing vector-valued 
Bessel potential spaces $H^{\nu,q}(\R;\bbH)$ 
\cite[Definition 5.6.2]{Hytonen:2016aa}. 
Therefore, let us justify 
the application of \eqref{eq:frac-Sob-Ito-ver0} within 
this framework. For a fixed $\vartheta\in C^\infty_c((0,T))$, 
consider $\vartheta \cI(\Phi)$. 
The zero-extension of $\vartheta \cI(\Phi)$ with 
respect to $t$---from $[0,T]$ to 
the entire $\R$---adheres to the 
same estimate as mentioned previously 
\eqref{eq:frac-Sob-Ito-ver0} 
(see, e.g., \cite[Section 5]{Di-Nezza:2012wi}), that is,
\begin{equation*}
	E\left[ \norm{\vartheta\cI(\Phi)}_{W^{\nu,q}(\R;\bbH)}^q\right]
	\lesssim_\vartheta
	E\left[ \norm{\cI(\Phi)}_{W^{\nu,q}([0,T];\bbH)}^q\right].
\end{equation*}  
Since $W^{\nu,q}(\R;\bbH)\hookrightarrow H^{\nu,q}(\R;\bbH)$ 
(see Remark \ref{rem:sobolev_intro}), we obtain 
a ``Bessel potential space" analog 
of  \eqref{eq:frac-Sob-Ito-ver0}:
\begin{equation}\label{eq:frac-Sob-Ito-ver0-H}
	E\left[ \norm{\vartheta\cI(\Phi)}_{H^{\nu,q}(\R;\bbH)}^q\right] 
	\lesssim_{q,\nu,\vartheta} E\left[
	\int_0^T \norm{\Phi(t)}_{L_2(\bbK,\bbH)}^q \,dt \right].
\end{equation}

\subsection{The chain rule property and consequences}
Our analysis employs the kinetic solution concept 
from \cite{Chen:2003td}, which hinges on the entropy 
inequalities \eqref{eq:dp-entropy-ineq} 
and incorporates a chain rule property 
\cite[Definition 2.1 (ii)]{Chen:2003td}. 
By examining \eqref{eq:dp-entropy-ineq} with 
the entropy $S(\lambda) = \lambda^2/2$, the 
right-hand side provides a degree of parabolic 
regularity, implying that 
$\sigma(u)\nabla_{\!\mx} u\in 
L^2(\Omega;L^2(w_Nd\mx))$ (see 
\eqref{eq:zetaN-pbound} with $p=0$), 
or $\sigma(u)\nabla_{\!\mx} u\in 
L^2(\Omega;L^2_{\loc}(\R_+^{d+1}))$. 
This regularity is a natural requirement 
within the entropy solution 
framework \cite{Volpert:1969fk,Chen:2003td}. Strictly speaking, 
the parabolic dissipation term 
$S''(u)\abs{\sigma(u)\nabla_{\!\mx} u}^2$ 
on the right-hand side of \eqref{eq:dp-entropy-ineq} 
must be interpreted through the lens of the 
chain rule property described 
in \cite{Chen:2003td}. To that end, we introduce 
the (matrix-valued) functions
$$
\Sigma(\lambda)
= \int_0^\lambda \sigma(w) \, dw,
\quad
\Sigma^{\psi}(\lambda)
= \int_0^\lambda 
\sqrt{\psi(w)}\sigma(w) \,dw,
$$
for $0\le \psi\in C(\R)$, and replace the 
right-hand side of \eqref{eq:dp-entropy-ineq} by
$-\absb{\Div_{\!\mx} \Sigma^{S''}(u)}^2$, 
where the divergence operator is applied row-wise.  
The chain rule property \cite[Definition 2.1 (ii)]{Chen:2003td}
requires that:
\begin{equation}\label{eq:chain-rule}
	\Div_{\!\mx} \Sigma^{\psi}(u) 
	= \sqrt{\psi(u)} \Div_{\!\mx} \Sigma(u),
	\quad \forall \psi\in C(\R), \, \, \psi\ge 0.
\end{equation}
This property, when combined with the 
entropy inequalities \eqref{eq:dp-entropy-ineq}, 
not only ensures uniqueness but, 
as we will demonstrate, 
also offers new possibilities for proving velocity 
averaging results for second-order equations. 
This approach contrasts with related works 
(discussed before) that have not previously 
exploited this property. 

The insight is that  \eqref{eq:chain-rule} 
enables us to treat the left-hand side 
of \eqref{eq:dp-entropy-ineq} simultaneously as both a 
second-order differential operator 
and a first-order differential operator. 
This is achieved by leveraging 
the following consequence 
of \eqref{eq:chain-rule}: 
\begin{equation}
\label{regularity}
\sum_{i,j=1}^{d} \partial^2_{x_i x_j} a_{ij}(u)
=\Div_{\!\mx} \bigl(\sigma(u) \bar\beta\bigr),
\quad 
\bar \beta:=\Div_{\!\mx}\Sigma^{S'}(u)
\in L^2\bigl(\Omega;L_{\loc}^2(\R_+^{d+1})^d\bigr).
\end{equation} 
It is worth noting that the chain rule 
property \eqref{eq:chain-rule} is automatically 
satisfied in the isotropic case (where $a(\cdot)$ 
is a scalar or diagonal matrix) and, therefore, 
does not need to be explicitly 
included in the solution concept.

As mentioned before, the primary approach 
to address \eqref{d-p} is through the kinetic 
formulation, see \eqref{diffusive-intro} and 
\eqref{diffusive-intro-ver2}. 
The measure-valued random variable 
$\zeta=\zeta(t,\mx,\lambda)$ 
in \eqref{diffusive-intro-ver2}, see 
\eqref{eq:zeta-def} and \eqref{eq:zetaN-pbound}, 
can be more precisely expressed as $\zeta = n+m$, 
where the kinetic entropy defect $m$ is a 
non-negative measure-valued random variable, and
$$
n:=\delta(\lambda-u)
\abs{\Div_{\!\mx} \Sigma(u)}^2.
$$
Using duality and \eqref{eq:chain-rule}, 
we can define the 
parabolic dissipation measure:
$$
n^{\psi}:=\actionb{n}{\psi} 
= \abs{\Div_{\!\mx} \Sigma^{\psi}(u)}^2
\in L^1(\Omega\times K), 
\, \, K\subset\subset \R_+^{d+1},
\,\, 0 \leq \psi \in C_c(\R).
$$

Finally, using \eqref{eq:chain-rule} and \eqref{square-root} 
we can reformulate \eqref{diffusive-intro-ver2} 
as a first-order equation in $\cD'_{t,\mx,\lambda}$, a.s.:
\begin{equation}\label{diffusive-ver2}
	\begin{split}
		&\pa_t h + \Div_{\!\mx} \bigl(f(\lambda) h\bigr) 
		- \Div_{\!\mx} \bigl(\sigma(\lambda)\beta\bigr) 
		\\ & \qquad 
		=-\pa_\lambda\pa_t \cI (B h)
		+\pa_t \cI (\pa_\lambda B h)
		+\pa_\lambda \bigl(\zeta+\tfrac12 B^2\pa_\lambda h\bigr),
	\end{split}
\end{equation}
where, informally, the vector $\beta$ can be interpreted 
as $\beta = \sigma(\lambda)\nabla_{\!x} h$, with 
$\nabla_{\!\mx} h = -\delta(\lambda-u)\nabla_{\!\mx} u$. 
However, rigorously, via  \eqref{eq:chain-rule},
\begin{equation}\label{eq:beta-def}
	\begin{split}
		&\beta =-\delta(\lambda-u)V, 
		\quad V:=\Div_{\!\mx} \Sigma(u) 
		\in L^2\bigl(\Omega;L^2_{\loc}(\R_+^{d+1})^d\bigr),
		\\ & 
		\quad \text{so that for $P$-a.e.~$\omega$, 
		$\beta(\omega,\cdot)\in 
		L^2_{w\star}(K;\cM(\R)^d))$, 
		$K \subset\subset \R_+^{d+1}$}.
	\end{split}	
\end{equation}

\begin{remark}
To illustrate the generality of the equations 
under consideration, note that $\beta(t,\mx,\lambda)$ 
can be written as $\pa_{\lambda} h(t,\mx,\lambda)V(t,\mx)$. 
Applying a ``differentiation by parts" in $\lambda$, 
the equation \eqref{diffusive-ver2} can 
be alternatively formulated as:
\begin{align*}
	&\pa_t h
	+ \Div_{\!\mx}
	\Bigl(
	\bigl[f(\lambda)+V(t,\mx)\sigma'(\lambda)\bigr] 
	h\Bigr)
	\\ &  \qquad 
	=-\pa_\lambda\pa_t \cI (B h)
	+\pa_t \cI (\pa_\lambda B h)
	+\pa_\lambda \bigl(\zeta+\tfrac12 B^2\pa_\lambda h\bigr)
	+\Div_{\!\mx}\pa_\lambda
	\bigl(V(t,\mx)\sigma(\lambda)h\bigr).
\end{align*}
Without the stochastic components ($B\equiv 0$), this 
equation is a special case of the 
more general heterogeneous equation
\begin{align*}
	&\pa_t h + \Div_{\!\mx} \bigl( f(t,\mx,\lambda) h\bigr)
	=\pa_{\lambda}\zeta+\Div_{\!x}\pa_{\lambda}g,
\end{align*}
where $g\in L^q$ for some $q>1$, 
with a full spatial derivative in the source term. 
In the $(t,\mx)$-independent drift case, this corresponds 
to the \textit{limiting case} of velocity averaging for 
compactness, as described 
in \cite{PerthameSouganidis:98}.
\end{remark}

\subsection{Final form of the kinetic equations}
\label{subsec:compact-supp}
In the next section, we use both the second-order kinetic 
equation \eqref{diffusive-intro-ver2} and the 
corresponding first-order one \eqref{diffusive-ver2} 
to establish the regularity of kinetic solutions. 
For this analysis, we assume that the solution $h$, the 
auxiliary variable $\beta$, the 
stochastic integral terms $\cI(Bh)$, $\cI(\pa_\lambda Bh)$, 
and the ``source" $\zeta$ are compactly 
supported in $t$, $\mx$ and $\lambda$, for each $\omega$. 

Let us explain why making such an assumption 
involves no loss of generality. Due to the linearity of 
the kinetic equation, we can multiply it by a cutoff 
function $\varphi$, resulting in an equation of 
essentially the same type, but with solutions $\varphi h$ 
that are compactly supported in $t$, $\mx$, 
and $\lambda$. To be a bit more precise, let $\varphi \in 
C^\infty_c(\R_+^{d+1} \times \R)$ be a fixed cut-off 
function of the form
$$
\varphi(t,\mx,\lambda)
= \vartheta(t) \phi(\mx)\varrho(\lambda),
\quad
\vartheta \in C^\infty_c(\R_+),\,\,
\phi \in C^\infty_c(\R^d),\,\,
\varrho \in C^\infty_c(\R),
$$
and define 
$$
\tilde{h} := h \varphi, 
\quad 
\tilde{\beta} := \beta \varphi, 
\quad 
\tilde{\zeta} := \zeta \varphi.
$$ 
As a result,  $\tilde{h}$, $\tilde{\beta}$, and $\tilde{\zeta}$ 
are compactly supported and therefore 
globally integrable. Given \eqref{eq:b-ell-def} 
and \eqref{eq:dp-infinite-Wiener}, we define 
$$
\tilde{B}=\seqb{\tilde{b}_\ell}_{\ell\ge 1}, 
\quad 
\widetilde{B^2}:=
\sum_{\ell\ge 1} \tilde{b}_\ell^2,
\quad
\tilde{b}_\ell(t,\mx,\lambda)
:=b_\ell(t,\mx,\lambda)\varphi, 
\quad \ell \in \N,
$$ 
noting that each $\tilde{b}_\ell(t,\mx,\lambda)$ 
is compactly supported across all variables 
and a term such as $\widetilde{B^2}\pa_\lambda h$ 
is defined as per \eqref{eq:zeta-B2-pah}.

Setting $\tilde \cI:=\vartheta \cI$, it is 
straightforward to verify by ``differentiating by parts” 
in the product of $\varphi$ and 
the stochastic integral part of \eqref{diffusive-intro-ver2}, 
\eqref{diffusive-ver2} that
\begin{align*}
	& \varphi\bigl(-\pa_\lambda\pa_t \cI (B h)
	+\pa_t \cI (\pa_\lambda B h)\bigr)
	\\ & \quad 
	= -\pa_\lambda \pa_t \tilde\cI(Bh\phi\varrho)
	+ \pa_t \tilde\cI (Bh \phi \pa_\lambda \varrho)
	+ \pa_t \tilde\cI(\pa_\lambda B h \phi \varrho)
	\\ & \quad \qquad
	- \pa_t \vartheta \phi\varrho \, \cI(\pa_\lambda B h)
	+ \pa_t \vartheta \phi \varrho \, \pa_\lambda \cI(Bh)
	\quad \text{in $\cDp_{t,\mx,\lambda}$},
\end{align*}
where the final term can be further expressed as
$$
\pa_t \vartheta \phi \varrho \, \pa_\lambda \cI(Bh)
=\pa_\lambda \bigl(\pa_t \vartheta \phi \varrho \, \cI(Bh)\bigr)
-\pa_t \vartheta \phi \pa_\lambda\varrho \, \cI(Bh).
$$
Along with the previously defined $\tilde h$, let us 
introduce the functions
$$
\overline{h}:=h\phi\varrho, 
\quad
\overline{\overline{h}}
:=h\phi \pa_\lambda \varrho.
$$
Then
\begin{align*}
	& \varphi\bigl(-\pa_\lambda\pa_t \cI (B h)
	+\pa_t \cI (\pa_\lambda B h)\bigr)
	\\ & \quad = 
	-\pa_\lambda \pa_t \tilde\cI\left(B \overline{h}\right)
	+ \pa_t \tilde\cI\bigl(B \overline{\overline{h}}\bigr)
	+ \pa_t \tilde\cI\bigl(\pa_\lambda B\overline{h}\bigr)
	\\ & \quad \quad
	- \pa_t \vartheta \phi\varrho \, \cI(\pa_\lambda B h)
	+\pa_\lambda \bigl(\pa_t \vartheta \phi \varrho \, \cI(Bh)\bigr)
	-\pa_t \vartheta \phi \pa_\lambda\varrho \, \cI(Bh)
	\quad \text{in $\cDp_{t,\mx,\lambda}$},
\end{align*}
Given this, and by repeatedly 
applying ``differentiation by parts’’ 
to the remaining products of the cutoff $\varphi$ and 
the non-stochastic terms, the kinetic equations 
\eqref{diffusive-intro-ver2} and \eqref{diffusive-ver2} can 
be reformulated in terms of $\tilde{h}$, 
$\tilde{\beta}$, $\tilde{\zeta}$, 
$\widetilde{B^2}$, as well as $\tilde{\cI}$, $\overline{h}$, 
and $\overline{\overline{h}}$. 
This yields the following forms that 
hold in $\cDp_{t,\mx,\lambda}$:
\begin{equation}\label{eq:kinetic-compact-supp-1}
	\begin{split}
		&\pa_t \tilde h+\sum_{i=1}^d \pa_{x_i}
		\bigl(f \tilde h\bigr) 
		-\sum\limits_{i,j=1}^d \pa_{x_i x_j}^2 
		\bigl( a_{ij} \tilde h \bigr) 
		\\ & \qquad 
		=
		-\pa_\lambda \pa_t \tilde\cI\left(B \overline{h}\right)
		+ \pa_t \tilde\cI\bigl(B \overline{\overline{h}}\bigr)
		+ \pa_t \tilde\cI\bigl(\pa_\lambda B\overline{h}\bigr)
		\\ & \qquad \qquad 
		+\pa_\lambda \left(\tilde \zeta
		+\tfrac{1}{2}\widetilde{B^2}\pa_\lambda h
		+\pa_t \vartheta \phi \varrho \, \cI(Bh)\right)
		+S_1,
	\end{split}
\end{equation}
and
\begin{equation}\label{eq:kinetic-compact-supp-2}
	\begin{split}
		& \pa_t \tilde h+\sum\limits_{i=1}^d \pa_{x_i}
		\bigl(f_i \tilde h\bigr)
		-\sum_{i,k=1}^d \pa_{x_i} 
		\Bigl(\sigma_{ik}\tilde \beta_k\Bigr)
		\\ & \qquad 
		=
		-\pa_\lambda \pa_t \tilde\cI\left(B \overline{h}\right)
		+ \pa_t \tilde\cI\bigl(B \overline{\overline{h}}\bigr)
		+ \pa_t \tilde\cI\bigl(\pa_\lambda B\overline{h}\bigr)
		\\ & \qquad \qquad
		+\pa_\lambda \left(\tilde \zeta
		+\tfrac{1}{2}\widetilde{B^2}\pa_\lambda h
		+\pa_t \vartheta \phi \varrho \, \cI(Bh)\right)
		+S_2,
	\end{split} 
\end{equation}
where
\begin{align*}
	S_1 & :=h\pa_t\varphi+\sum_{i=1}^d 
	f_ih\pa_{x_i} \varphi
	-\sum_{i,j=1}^d
	\pa_{x_j}\bigl(a_{ij} h
	\pa_{x_j}\varphi \bigr)
	-\sum_{i,j=1}^d
	\pa_{x_j}\bigl(a_{ij}h\bigr)
	\pa_{x_i}\varphi
	\\ & \qquad 
	-\pa_t \vartheta \phi\varrho \, \cI(\pa_\lambda B h)
	-\pa_t \vartheta \phi \pa_\lambda\varrho \, \cI(Bh)
	-\bigl(\zeta + \tfrac12 B^2\pa_\lambda h\bigr)\pa_{\lambda}\varphi
\end{align*}
and
\begin{align*}
	S_2 & :=h\pa_t\varphi+\sum_{i=1}^d 
	f_ih\pa_{x_i} \varphi
	-\sum_{i,k=1}^d \sigma_{ik}
	\beta_k\pa_{x_i}\varphi
	\\ & \qquad 
	-\pa_t \vartheta \phi\varrho \, \cI(\pa_\lambda B h)
	-\pa_t \vartheta \phi \pa_\lambda\varrho \, \cI(Bh)
	-\bigl(\zeta + \tfrac12 B^2\pa_\lambda h\bigr)\pa_{\lambda}\varphi.
\end{align*}

The terms on the second lines of 
\eqref{eq:kinetic-compact-supp-1} 
and \eqref{eq:kinetic-compact-supp-2}
consist exclusively of stochastic integrals 
in the form $(\omega,t)\mapsto 
\tilde\cI\left(\Phi(\lambda)\right)$, 
which take values in $L^2_{t,\mx}$ for each $\lambda$. 
The integrands of these integrals are 
structured as
$$
\Phi(\lambda)=B(\lambda) 
\overline{h}(\cdot_t,\cdot_\mx,\lambda), 
\quad
\Phi(\lambda)
=B(\lambda)\overline{\overline{h}}(\cdot_t,\cdot_\mx,\lambda),
\quad
\Phi(\lambda)
=\pa_\lambda B(\lambda)\overline{h}(\cdot_t,\cdot_\mx,\lambda).
$$
Observe that each $t\mapsto \tilde\cI\left(\Phi(\lambda)\right)$ 
is compactly supported on $\R$ with support in $\R_+$. 
Additionally, the functions 
$\overline{h}$, $\overline{\overline{h}}$ are bounded and 
compactly supported in the variables $\mx,\lambda$ (uniformly in $t$). 
Consequently, by the BDG inequality, 
\begin{equation*}
	E \sup_{t\in \R} 
	\norm{\tilde\cI(\Phi(\lambda))(t)}_{L^2(\R^d)}^q
	\lesssim_\varphi 1, \quad \forall \lambda \in \R, 
	\,\, \forall q\ge 1.
\end{equation*}
where the implicit constant depends crucially on the 
support of the cutoff function $\varphi$. 
Similar statements apply to the terms on the third lines 
of \eqref{eq:kinetic-compact-supp-1} 
and \eqref{eq:kinetic-compact-supp-2}, 
as well as the stochastic integral terms in $S_1$ and $S_2$.

Recall that $u \in L^2_{\omega,t,\mx}$ 
with the assumption \eqref{eq:dp-def-noise-B}, 
$h \in L^\infty$, 
$\sigma\nabla_{\!\mx}u\in L^2_{\loc}$, 
$V \in L^2_{\loc}$ (see \eqref{eq:beta-def}). 
As $S_1$ is compactly supported 
in $\lambda$, we can define
$$
\zeta_i = \tilde \zeta
+\tfrac{1}{2}\widetilde{B^2}\pa_\lambda h
+\pa_t \vartheta \phi \varrho \, \cI(Bh) 
+ \int_{-\infty}^\lambda S_i(t,\mx, \kappa)
\, d\kappa, \quad i=1,2,
$$
and replace the third line 
in \eqref{eq:kinetic-compact-supp-1} 
with $\pa_\lambda \zeta_1$, where 
$\zeta_1(\omega,\cdot)$ 
is a finite (compactly supported) 
measure. Similarly, we may 
replace third line in \eqref{eq:kinetic-compact-supp-2} 
with $\pa_\lambda \zeta_2$, where $\zeta_2(\omega,\cdot)$ 
is a finite (compactly supported) measure. 

Consequently, the kinetic equations satisfied by $\tilde{h}$ 
retain essentially the same form as the original 
equations \eqref{diffusive-intro-ver2} 
and \eqref{diffusive-ver2}, with the 
exception of the stochastic part (the terms appearing 
on the second lines of \eqref{eq:kinetic-compact-supp-1} 
and \eqref{eq:kinetic-compact-supp-2}). 
These terms involve, instead of $\tilde{h}$, 
the $(\mx, \lambda)$-compactly supported 
functions $\overline{h}, \overline{\overline{h}} 
\in L^\infty_{\omega, t, \mx, \lambda}$. 
Given the properties of these functions and the compact support of 
$t \mapsto \tilde{\cI}(\cdots)$, 
replacing them with $\tilde{h}$ (for notational simplicity) 
does not affect the analysis or alter the resulting 
regularity exponents. In addition to the terms 
present in the original equations, the 
stochastic part also includes an extra term:
$\pa_t \tilde\cI\bigl(B \overline{\overline{h}}\bigr)$. 
This term is structurally similar to the first 
stochastic term, $\pa_\lambda 
\pa_t \tilde\cI\left(B \overline{h}\right)$, 
but lacks the $\lambda$-derivative, making it 
simpler to handle. As it does not influence the regularity 
exponents either, we will omit this term from further consideration.

From this point onward, we will focus on the original 
kinetic equations \eqref{diffusive-intro-ver2} 
and \eqref{diffusive-ver2}, assuming, whenever 
convenient, that the relevant terms have compact 
support across all pertinent variables.
In summary, for use in the next section, the relevant kinetic 
equations take the following form:
\begin{align}
	\label{diffusive1-final-ver}
	& \pa_t h + \sum_{i=1}^d \pa_{x_i}
	\bigl( f_i(\lambda) h \bigr)
	- \sum_{i,j=1}^d \pa_{x_i x_j}^2 
	\bigl(a_{ij}(\lambda) h \bigr)
	\\ & \notag \qquad 
	=-\pa_t \cI\bigl(B\pa_\lambda h)
	+ \pa_\lambda \zeta
	=-\pa_\lambda\pa_t \cI (B h)
	+\pa_t \cI (\pa_\lambda B h)
	+ \pa_\lambda \zeta,
	\\ 
	\label{diffusive2-final-ver}
	& \pa_t h + \sum_{i=1}^d \pa_{x_i} 
	\bigl( f_i(\lambda) h \bigr)
	-\sum_{i,k=1}^d \pa_{x_i} 
	\bigl(\sigma_{ik}(\lambda)\beta_k\bigr)
	\\ & \notag \qquad 
	= -\pa_t \cI\bigl(B\pa_\lambda h)
	+ \pa_\lambda \zeta
	=-\pa_\lambda\pa_t \cI (B h)
	+\pa_t \cI (\pa_\lambda B h)
	+ \pa_\lambda \zeta,
\end{align}
both of which hold a.s.~in $\cD'_{t,\mx,\lambda}$.
It is worth mentioning that the specific 
form of $h$ is not important, whether it is derived 
from $\chi_\pm$, $\chi$, or $\frac{1}{2}\big(\sgn(u-\lambda)
-\sgn(\lambda)\big)$ (see Subsection \ref{subsec:kinetic}). 
Additionally, beyond the compact support assumption, 
we assume $h \in L^\infty_{t,\mx,\lambda}$, that the 
bounds \eqref{eq:zetaN-pbound} and \eqref{eq:moments} hold 
without the weight $w_N$, and that $\beta$ satisfies 
\eqref{eq:beta-def} with $K$ replaced by $\R_+^{d+1}$.

Regarding the stochastic integrals $\cI\bigl(Bh)$ 
and $\cI (\pa_\lambda B h)$, we will 
later apply the fractional Sobolev estimate 
\eqref{eq:frac-Sob-Ito-ver0-H}
with 
\begin{equation}\label{eq:our-Phi}
	\Phi(t,\mx,\lambda)
	=B(\lambda)h(\cdot_t,\cdot_\mx,\lambda),
	\qquad
	\Phi(t,\mx,\lambda)
	=\pa_\lambda B(\lambda) 
	h(t,\mx,\lambda), 
\end{equation}
where $\lambda$ is treated as a parameter. In view of the 
compact support assumption, the 
relevant example of $\bbH$ is provided 
by $L^2(\R^d)$ (see \eqref{eq:H-space}). 
Consequently, the estimate presented 
in \eqref{eq:frac-Sob-Ito-ver0-H} 
is applicable, resulting in
\begin{align}
	&E\left[
	\norm{\cI(\Phi(\cdot,\cdot,
	\lambda))}_{H^{\nu,q}(\R;L^2(\R^d))}^q
	\right] \lesssim_{q,\nu} E\left[
	\int_{\R} 
	\norm{\Phi(t,\cdot,\lambda)}_{L_2(\bbK,L^2(\R^d))}^q
	\,dt \right]
	\notag \\ & 
	\qquad
	\overset{\eqref{eq:dp-def-noise-B},
	\eqref{eq:dp-def-noise-pa-B}}{\lesssim} 
	E\int_\R\norm{h(t,\cdot,\lambda)}_{L^2(\R^d)}^q\,dt,
	\qquad \forall \lambda \in \R, 
	\quad q\ge 2, 
	\quad \nu < \frac{1}{2},
	\label{eq:frac-Sob-Ito-ver1}
\end{align}
where the implicit constant in the inequality 
$\lesssim$ depends on the 
compact support ($\subset \R_+$) of $\cI$ in $t$, 
and $h$ in $\lambda$. To elaborate, given 
\eqref{eq:our-Phi}, \eqref{eq:dp-def-noise-B}, 
and the compact support assumptions, 
\begin{align*}
	\norm{(Bh)(t,\cdot,\lambda)}_{L_2(\bbK,L^2(\R^d))}^2
	& =\int_{\R^d}\sum_{\ell\geq 1}
	\abs{b_\ell(t,\mx,\lambda)}^2
	\abs{h(t,\mx,\lambda)}^2\, d\mx
	\\ & 
	\overset{\eqref{eq:dp-def-noise-B}}{\lesssim}
	\norm{h(t,\cdot,\lambda)}_{L^2(\R^d)}^2.
\end{align*}
Similarly, using \eqref{eq:dp-def-noise-pa-B}, 
$\norm{(\pa_\lambda Bh)(t,\cdot,\lambda)}_{L_2(\bbK,L^2(\R^d))}^2
\lesssim \norm{h(t,\cdot,\lambda)}_{L^2(\R^d)}^2$.
We will also require the 
following (for $\Phi$ as in \eqref{eq:our-Phi}): 
\begin{align}\label{eq:BDG-Phi}
	&E \sup_{t\in \R} 
	\norm{\cI(\Phi(\cdot,\cdot,\lambda))(t)}_{L^2(\R^d)}^q
	\lesssim
	E\left[\left(\int_{\R} 
	\norm{\Phi(t,\cdot,\lambda)}_{L_2(\bbK,L^2(\R^d))}^2
	\,dt\right)^{q/2}\right]
	\\ & \qquad\quad \lesssim 
	E\left[\left(\int_{\R} 
	\norm{h(t,\cdot,\lambda)}_{L^2(\R^d)}^2
	\,dt\right)^{q/2}\right],
	\qquad \forall \lambda \in \R, 
	\quad q\ge 1,
	\notag
\end{align}
which comes from the BDG inequality, where the 
final line crucially depends on 
the compact support assumptions.

\section{The regularity result}\label{sec:regularity}

Before presenting the main theorem, we will establish 
a few results concerning $L^1$-type 
Fourier multiplier operators. We recall 
that a comprehensive characterization of 
$L^p$-Fourier multipliers 
for arbitrary $p\in [1,\infty]$ remains elusive. 
Nevertheless, in the specific cases of $p\in \{1,2\}$, 
the situation has been fully resolved, as 
demonstrated in \cite[Theorems 2.5.8 
and 2.5.10]{Grafakos:2014aa} (see 
also \cite[Theorem 6.1.2]{Bergh:1976aa}):

\begin{proposition}\label{L1-multiplier}
Let $\psi : \R^D \to \C$ be a given function, and 
let $\mathcal{A}_\psi$ represent the 
corresponding Fourier multiplier operator. 
The following properties hold:
\begin{itemize}
	\item[i)] $\mathcal{A}_\psi$ is an $L^1$-multiplier 
	operator if and only if ${\cal F}^{-1}(\psi)$ is 
	a finite Borel measure, and the norm is equal to 
	the total variation of the measure ${\cal F}^{-1}(\psi)$. 
	Furthermore, if ${\cal F}^{-1}(\psi) \in L^1(\R^D)$, 
	$$
	\norm{\mathcal{A}_\psi}_{L^1 \to L^1} 
	=\norm{{\cal F}^{-1}(\psi)}_{L^1(\R^D)}.
	$$

	\item[ii)] $\mathcal{A}_\psi$ is an $L^2$-multiplier 
	operator if and only if $\psi$ is a bounded function, 
	with its norm given by 
	$$
	\norm{\mathcal{A}_\psi}_{L^2 \to L^2} 
	= \norm{\psi}_{L^\infty(\R^D)}.
	$$
\end{itemize}
\end{proposition}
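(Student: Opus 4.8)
The plan is to establish the two parts separately, since they rest on different classical facts: part (ii) on Plancherel's theorem together with the elementary description of bounded multiplication operators on $L^2$, and part (i) on the structure theorem identifying bounded translation-invariant operators on $L^1$ with convolution by a finite measure. Throughout I would take $\cA_\psi$ to be defined a priori on the Schwartz class $\cS(\R^D)$ by $\cA_\psi f = \cF^{-1}(\psi\,\cF f)$, and read ``$\cA_\psi$ is an $L^p$-multiplier operator'' as ``$\cA_\psi$ extends to a bounded operator on $L^p(\R^D)$''. Much of the argument is standard, so the proof essentially amounts to assembling these pieces; one may equally well just invoke \cite[Theorems 2.5.8 and 2.5.10]{Grafakos:2014aa} or \cite[Theorem 6.1.2]{Bergh:1976aa}.

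For part (ii): since $\cF$ is, up to normalization, unitary on $L^2(\R^D)$, one has $\norm{\cA_\psi f}_{L^2} = \norm{\psi\,\widehat f}_{L^2}$ for $f \in \cS(\R^D)$, a dense subspace; hence $\cA_\psi$ extends boundedly to $L^2$ if and only if the pointwise multiplication operator $g \mapsto \psi g$ is bounded on $L^2(\R^D)$, with equal norms. That the latter is equivalent to $\psi \in L^\infty(\R^D)$, with operator norm exactly $\norm{\psi}_{L^\infty}$, is immediate: the bound $\norm{\psi g}_{L^2}\le\norm{\psi}_{L^\infty}\norm{g}_{L^2}$ is trivial, and it is saturated in the limit by the normalized indicators $g_n = \En_{E_n}/|E_n|^{1/2}$, where $E_n$ is a set of finite positive measure (intersect with a large ball if necessary) on which $|\psi| \ge \norm{\psi}_{L^\infty}-1/n$; the same construction with $\{|\psi|>n\}$ shows unboundedness when $\psi \notin L^\infty$.

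For part (i): the ``if'' direction is Young's convolution inequality. If $\mu := \cF^{-1}\psi$ is a finite Borel measure, then $\cA_\psi f = \mu*f$ for $f \in \cS(\R^D)$, and Minkowski's integral inequality gives $\norm{\mu*f}_{L^1}\le\norm{\mu}_{\mathrm{TV}}\norm{f}_{L^1}$; when $\mu \in L^1(\R^D)$ this reads $\norm{\cA_\psi}_{L^1\to L^1}\le\norm{\cF^{-1}\psi}_{L^1}$. For the converse, suppose $\cA_\psi$ is bounded on $L^1$ and let $\ph_\eps$ be an approximate identity with $\ph_\eps \in \cS(\R^D)$ and $\widehat{\ph_\eps}\to 1$ pointwise and boundedly. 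Then $\cA_\psi\ph_\eps \in L^1(\R^D)$ with $\norm{\cA_\psi\ph_\eps}_{L^1}\le\norm{\cA_\psi}_{L^1\to L^1}$ for all $\eps$; viewing these functions inside $\cM(\R^D)=C_0(\R^D)^*$, Banach--Alaoglu gives a subsequence $\cA_\psi\ph_{\eps_k}\weakstar\mu$ with $\norm{\mu}_{\mathrm{TV}}\le\liminf_k\norm{\cA_\psi\ph_{\eps_k}}_{L^1}\le\norm{\cA_\psi}_{L^1\to L^1}$, by weak-$\star$ lower semicontinuity of the total variation. To identify $\mu$, note $\widehat{\cA_\psi\ph_\eps} = \psi\,\widehat{\ph_\eps}$ and $\norm{\psi\,\widehat{\ph_\eps}}_{L^\infty}\le\norm{\cA_\psi\ph_\eps}_{L^1}\le\norm{\cA_\psi}_{L^1\to L^1}$, so (since $\widehat{\ph_\eps}\to 1$ locally uniformly) $\psi$ is locally bounded; then for $g\in\cS(\R^D)$, testing against $\widehat g \in C_0(\R^D)$ yields $\langle\widehat\mu,g\rangle = \lim_k\langle\cA_\psi\ph_{\eps_k},\widehat g\rangle = \lim_k\langle\psi\,\widehat{\ph_{\eps_k}},g\rangle = \langle\psi,g\rangle$ by dominated convergence, i.e.\ $\cF^{-1}\psi = \mu$. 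Combining this with the Young bound gives $\norm{\cA_\psi}_{L^1\to L^1}=\norm{\mu}_{\mathrm{TV}}$, and the displayed $L^1$ identity when $\mu$ is absolutely continuous.

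The step I expect to be the main obstacle is the converse direction of (i): turning an abstract bounded operator on $L^1$ into convolution with a finite measure. The delicate points are (a) fixing the initial domain of $\cA_\psi$ and the density argument extending it to $L^1$; (b) verifying that the weak-$\star$ limit is a genuine finite Borel measure with total variation controlled by the operator norm, which is exactly where Banach--Alaoglu on $C_0(\R^D)^*$ and the uniform $L^1$-bound are used; and (c) the identification $\psi = \widehat\mu$, where one must justify $\langle\psi\,\widehat{\ph_{\eps_k}},g\rangle\to\langle\psi,g\rangle$ --- this is legitimate precisely because boundedness of $\cA_\psi$ on $L^1$ forces $\psi$ to be locally bounded with a uniform constant, while $\widehat{\ph_{\eps_k}}\to 1$ boundedly. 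Once $\psi = \widehat\mu$ is in hand, the norm identities in both parts follow softly.
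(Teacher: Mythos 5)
The paper does not prove Proposition \ref{L1-multiplier}; it simply states it and cites \cite[Theorems 2.5.8 and 2.5.10]{Grafakos:2014aa} and \cite[Theorem 6.1.2]{Bergh:1976aa}, so there is no in-paper proof to compare against. Your argument is a correct reconstruction of the standard proofs behind those citations: part (ii) is Plancherel plus the elementary characterization of bounded pointwise multipliers on $L^2$, and part (i) combines Young's inequality (for the ``if'' direction and the upper bound on the operator norm) with the approximate-identity/Banach--Alaoglu argument (for the converse and the lower bound), with the identification $\cF^{-1}\psi=\mu$ carried out by Fourier duality and dominated convergence after first deducing boundedness of $\psi$ from the uniform $L^\infty$ control of $\psi\,\widehat{\ph_\eps}$. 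Two small remarks: the Riemann--Lebesgue bound actually gives \emph{global}, not merely local, boundedness of $\psi$ once you pass $\eps\to0$, which is what dominated convergence really needs against a Schwartz test function; and one should note explicitly that the measure extracted in the converse direction coincides, by uniqueness of the Fourier transform of finite measures, with the $\mu$ used in the Young bound, so the two one-sided estimates close up to the claimed equality $\norm{\cA_\psi}_{L^1\to L^1}=\norm{\mu}_{\mathrm{TV}}$. Neither point is a gap, just a place where the writeup should be a touch more explicit.
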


The following lemma will be used repeatedly.

\begin{lemma}\label{L-ocjena-L1}
Let $\psi=\psi(\mxi,\lambda) \in C(\R^{D}\times\R)$ 
be a function of class $C^{D+1}$ with 
respect to $\mxi$, and suppose there exists 
a compact set $K\subset \R^D$ such that 
$\supp(\psi)\subset K\times\R$. 
Furthermore, let $\rho \in C_c(\R)$ 
and $\beta \in L_c^1(\R^D; \mathcal{M}(\R))$.  
We denote
\begin{align*}
	&\left(\, \int_{\R}{\cal A}_{ \psi(\cdot,\lambda)} 
	\rho(\lambda)\,d\beta(\cdot,\lambda)\right)(\mx)
	\\ & \qquad 
	:=\int_{\R^{2D}} e^{2\pi i (\mx-\my)\cdot \mxi}
	\int_{\R}\psi(\mxi , \lambda) \rho(\lambda)
	\,d\beta(\my,\lambda)  \, d\my\,d\mxi
	\\ & 
	\qquad ={\cal F}^{-1}\left(\, \int_{\R} \psi(\cdot,\lambda) 
	\rho(\lambda)\,d\widehat{\beta}(\cdot,\lambda)\right)(\mx),
\end{align*}
where $\widehat{\beta}(\cdot,\lambda)$ is the 
Fourier transform of $\beta$ with respect to the first variable.

For every $\rho\in C_c(\mathbb{R})$, 
the following holds:
\begin{equation}\label{ocjena-L1}
	\begin{split}
		&\norm{\int_{\R}{\cal A}_{ \psi(\cdot,\lambda)} 
		\rho(\lambda)\,d\beta(\cdot,\lambda)}_{L^1(\R^D)} 
		\\ & \qquad 
		\lesssim_{D,K} 
		\norm{\psi}_{C(\supp(\rho);C^{D+1}(K))}
		\,  \norm{\int_{\R} |\rho(\lambda)|
		\,d|\beta|(\cdot,\lambda)}_{L^1(\R^D)},
	\end{split}
\end{equation}
where, for $\mx\in\Rd$, $|\beta|=|\beta|(\mx,\cdot)$ 
denotes the total variation measure 
of $\beta=\beta(\mx,\cdot)$.
\end{lemma}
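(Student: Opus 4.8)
The plan is to reduce the estimate \eqref{ocjena-L1} to the $L^1$-multiplier characterization in Proposition \ref{L1-multiplier}(i), applied for each fixed $\lambda$ to the symbol $\mxi \mapsto \psi(\mxi,\lambda)$, and then to integrate in $\lambda$ against the (total variation of the) measure-valued map $\beta$. First I would fix $\lambda \in \supp(\rho)$ and regard $\cA_{\psi(\cdot,\lambda)}$ as a Fourier multiplier operator on $L^1(\R^D)$; by Proposition \ref{L1-multiplier}(i) its operator norm equals $\norm{\cF^{-1}(\psi(\cdot,\lambda))}_{L^1(\R^D)}$ provided this inverse Fourier transform lies in $L^1$. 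The point of the hypotheses --- that $\psi(\cdot,\lambda)$ is $C^{D+1}$ in $\mxi$ and supported in a fixed compact set $K$ --- is precisely to guarantee this: integrating by parts $D+1$ times in the formula for $\cF^{-1}(\psi(\cdot,\lambda))(\mz)$ produces the bound
\[
	\absb{\cF^{-1}(\psi(\cdot,\lambda))(\mz)}
	\lesssim_{D,K}
	\frac{\norm{\psi(\cdot,\lambda)}_{C^{D+1}(K)}}{1+\abs{\mz}^{D+1}},
\]
where the compact support in $K$ makes all boundary terms vanish and contributes only a constant depending on $|K|$. Since $(1+\abs{\mz}^{D+1})^{-1}\in L^1(\R^D)$, this yields
\[
	\norm{\cF^{-1}(\psi(\cdot,\lambda))}_{L^1(\R^D)}
	\lesssim_{D,K} \norm{\psi(\cdot,\lambda)}_{C^{D+1}(K)}
	\le \norm{\psi}_{C(\supp(\rho);C^{D+1}(K))}.
\]

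Next I would unwind the definition of the object on the left-hand side of \eqref{ocjena-L1}. By the representation given in the statement, $\left(\int_\R \cA_{\psi(\cdot,\lambda)}\rho(\lambda)\,d\beta(\cdot,\lambda)\right)(\mx) = \int_\R \bigl(\cA_{\psi(\cdot,\lambda)}[\rho(\lambda)\beta(\cdot,\lambda)]\bigr)(\mx)\,d\lambda$, where for each $\lambda$, $\beta(\cdot,\lambda)$ is interpreted via the measure $\beta \in L^1_c(\R^D;\cM(\R))$ disintegrated appropriately; concretely $\cA_{\psi(\cdot,\lambda)}$ acts on the $\R^D$-variable of the measure $\rho(\lambda)\,\beta(\my,\lambda)\,d\my$. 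Taking $L^1(\R^D)$ norms, using Minkowski's integral inequality to pull the norm inside the $\lambda$-integral, and then applying the operator-norm bound from the previous step together with $\cF^{-1}$ being an isometry-type convolution estimate ($\norm{\cA_{\psi(\cdot,\lambda)} g}_{L^1} \le \norm{\cF^{-1}(\psi(\cdot,\lambda))}_{L^1}\,\norm{g}_{L^1}$ for $g$ an $L^1$ density, and the analogous bound with $\norm{g}_{L^1}$ replaced by the total variation when $g$ is replaced by a measure), I obtain
\[
	\norm{\int_\R \cA_{\psi(\cdot,\lambda)}\rho(\lambda)\,d\beta(\cdot,\lambda)}_{L^1(\R^D)}
	\le \int_\R \norm{\cF^{-1}(\psi(\cdot,\lambda))}_{L^1(\R^D)}\,\abs{\rho(\lambda)}\,\norm{\,\abs{\beta}(\cdot,\lambda)}_{L^1(\R^D)}\,d\lambda,
\]
and then bounding the first factor uniformly in $\lambda$ by $\norm{\psi}_{C(\supp(\rho);C^{D+1}(K))}$ and recombining the remaining $\lambda$-integral into $\norm{\int_\R \abs{\rho(\lambda)}\,d\abs{\beta}(\cdot,\lambda)}_{L^1(\R^D)}$ (again by Tonelli, since everything is nonnegative) gives exactly \eqref{ocjena-L1}.

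The main obstacle I anticipate is purely technical bookkeeping rather than a conceptual difficulty: making rigorous the manipulation of $\cA_{\psi(\cdot,\lambda)}$ acting on a finite Radon measure (as opposed to an $L^1$ function) in the $\R^D$-variable, and justifying the interchange of the $\cF^{-1}$ operation, the $\my$-integration against $d\beta(\my,\lambda)$, and the $\lambda$-integration --- i.e., a Fubini/Tonelli argument in the presence of the vector-valued measure $\beta \in L^1_c(\R^D;\cM(\R))$. This is handled by noting that convolution of the finite measure $\cF^{-1}(\psi(\cdot,\lambda))\,d\mz$ (which is in fact an $L^1$ function by the decay estimate above) against the measure $\rho(\lambda)\beta(\cdot,\lambda)$ produces an $L^1(\R^D)$ function with norm controlled by $\norm{\cF^{-1}(\psi(\cdot,\lambda))}_{L^1}\,\abs{\rho(\lambda)}\,\norm{\abs{\beta}(\cdot,\lambda)}_{L^1(\R^D)}$, and all integrands are nonnegative once absolute values are taken, so Tonelli applies without integrability concerns. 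The compact support of $\beta$ (and of $\rho$) in all variables ensures every integral in sight is finite, so there are no convergence issues to address.
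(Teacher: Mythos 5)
Your proposal is essentially correct and rests on the same key ingredient as the paper's proof, namely the decay estimate
\[
\absb{\cF^{-1}(\psi(\cdot,\lambda))(\mz)}
\lesssim_{D,K}\frac{\norm{\psi(\cdot,\lambda)}_{C^{D+1}(K)}}{1+\abs{\mz}^{D+1}},
\]
obtained from compact $\mxi$-support plus $C^{D+1}$ regularity, which gives $\cF^{-1}(\psi(\cdot,\lambda))\in L^1(\R^D)$ uniformly in $\lambda\in\supp(\rho)$ and hence the $L^1$-multiplier bound via Proposition \ref{L1-multiplier}(i). Where you differ from the paper is in how the $\lambda$-integration is handled. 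The paper approximates $\psi$ uniformly in $L^\infty(\supp(\rho);C^{D+1}(K))$ by functions $\psi_n$ that are piecewise constant in $\lambda$; on each piece the symbol no longer depends on $\lambda$, so Proposition \ref{L1-multiplier}(i) is applied to a fixed operator acting on the genuine $L^1(\R^D)$ function $\my\mapsto\int\chi^n_j\rho\,d\beta(\my,\cdot)$, and then $n\to\infty$. This sidesteps entirely the issue of interpreting an object like "$\beta(\cdot,\lambda)$ for fixed $\lambda$" (which is not a priori meaningful, since $\beta(\my,\cdot)$ is a general Radon measure in $\lambda$ that may have no disintegration into $\lambda$-slices). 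Your Minkowski step, as written, does precisely treat $\lambda$ as a Lebesgue variable — the intermediate display $\norm{\cdots}_{L^1}\le\int_\R\norm{\cF^{-1}(\psi(\cdot,\lambda))}_{L^1}\abs{\rho(\lambda)}\norm{\,\abs{\beta}(\cdot,\lambda)}_{L^1}d\lambda$ is not well-formed in this generality. The fix is to bypass the operator-norm viewpoint and go directly to the kernel: write $g(\mx)=\int_{\R^D}\int_\R k_\lambda(\mx-\my)\rho(\lambda)\,d\beta(\my,\lambda)\,d\my$ with $k_\lambda=\cF^{-1}(\psi(\cdot,\lambda))$, take absolute values, and apply Tonelli to the triple integral to obtain $\norm{g}_{L^1}\le\sup_{\lambda\in\supp(\rho)}\norm{k_\lambda}_{L^1}\cdot\int_{\R^D}\int_\R\abs{\rho}\,d\abs{\beta}(\my,\cdot)\,d\my$, which is exactly \eqref{ocjena-L1}. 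You do flag this as the technical crux, and the Tonelli argument genuinely closes it, so your route does work — it is arguably more direct than the paper's discretization and avoids a limiting argument — but the appeal to Minkowski's inequality as stated should be replaced by the explicit convolution-kernel representation, since the lemma's hypotheses do not give you a Lebesgue density in $\lambda$.
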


\begin{proof}
Throughout the proof, we have $\lambda \in \R$ 
and $\mx, \mxi \in \R^D$. Here, $\mx$ refers to the 
physical variable and $\mxi$ to its 
dual (Fourier) counterpart.

Let us take $\psi$ and $\rho$ as in the statement and let 
$M\in \N$ be such that $\supp(\rho)\subset [-M/2,M/2]=:L_M$. 
For $n \in \N$, let $\seq{K_j^n}_{j=1}^{Mn}$ denote 
a sequence of disjoint intervals that partition $L_M$ 
into $Mn$ subintervals of equal length ($1/n$), 
arranged symmetrically about the origin. 
Note that the closure of $\bigcup\limits_{j=1}^{Mn}K_j^n$ 
equals $L_M$.

We approximate $\psi$ in $L^\infty( L_M;C^{D+1}(K))$ 
by the function 
$$
\psi_n(\mxi,\lambda)=\sum\limits_{j=1}^{Mn}
\psi(\mxi,\lambda^n_j) \chi^n_j(\lambda) ,
$$
where $\chi^n_j$ is the characteristic 
function of the interval $K^n_j$ and $\lambda^n_j$ 
is the center of the interval $K^n_j$. 

For any $\lambda\in\R$ we have that 
${\cal A}_{\psi(\cdot,\lambda)}$ is 
an $L^1$-Fourier multiplier operator. Indeed, since we have 
\begin{equation*}
	\begin{split}
		&\norm{{\cal F}^{-1}\bigl(\psi(\cdot,\lambda)\bigr)}_{L^1(\R^D)} 
		\\ & \qquad = \bigl\|(1+|\mx|^{D+1})^{-1}(1+|\mx|^{D+1}) 
		{\cal F}^{-1}\bigl(\psi(\cdot,\lambda)\bigr)\bigr\|_{L^1(\R^D)}
		\\ & \qquad 
		\lesssim_D \norm{(1+|\mx|^{D+1})^{-1}}_{L^1(\R^D)}
		\norm{\left(1+\sum_{k=1}^D |x_k|^{D+1}\right)
		{\cal F}^{-1}\bigl(\psi(\cdot,\lambda)\bigr)}_{L^\infty(\R^D)} 
		\\ & \qquad
		\lesssim_D \norm{\psi(\cdot,\lambda)}_{L^1(\R^D)}
		+ \sum_{k=1}^D 
		\norm{(\pa_{\xi_k}^{D+1}\psi)(\cdot,\lambda)}_{L^1(\R^D)} 
		\\ & \qquad \lesssim_{D, K} 
		\norm{\psi(\cdot,\lambda)}_{C^{D+1}(K)}<\infty ,
	\end{split}
\end{equation*}
the claim follows by Proposition \ref{L1-multiplier}(i).

Applying again Proposition \ref{L1-multiplier}(i) and 
the estimate above, we get
\begin{align*}
	&\norm{\int_{\R} {\cal A}_{\psi_n(\cdot,\lambda)} 
	\rho(\lambda)\,d\beta(\cdot,\lambda)}_{L^1(\R^D)}
	=\norm{\sum\limits_{j=1}^{Mn} 
	{\cal A}_{\psi(\cdot,\lambda_j)} \int_{\R}
	\chi^n_j(\lambda) \rho(\lambda)
	\,d\beta(\cdot,\lambda)}_{L^1(\R^D)}
	\\ & \qquad
	 \leq \sum\limits_{j=1}^{Mn} 
	\norm{{\cal F}^{-1} \left(\psi(\cdot,\lambda_j)\right)}_{L^1(\R^D)}
	\, \norm{\int_{\R} \chi^n_j(\lambda) \rho(\lambda)
	\,d\beta(\cdot,\lambda)}_{L^1(\R^D)}
	\\ & \qquad 
	\leq \int_{\R^D}\int_{\R} \sum\limits_{j=1}^{Mn} 
	\norm{{\cal F}^{-1} \left(\psi(\cdot,\lambda_j)\right)}_{L^1(\R^D)}
	\chi^n_j(\lambda) |\rho(\lambda)|
	\,d|\beta|(\cdot,\lambda) \, d\mx
	\\ & \qquad 
	\lesssim_{D,K} \sup\limits_{\lambda \in L_M} 
	\abs{\sum\limits_{j=1}^{Mn}  \chi^n_j(\lambda) 
	\bigl\|\psi(\cdot,\lambda)\bigr\|_{C^{D+1}(K)}}
	\, \int_{\R^D}\int_{\R}
	|\rho(\lambda)|\,d|\beta|(\cdot,\lambda) \, d\mx .
\end{align*} 
As $n\to \infty$, we obtain
\begin{equation*}
	\begin{split}
		& \norm{\int_{\R}{\cal A}_{\psi(\cdot,\lambda)}
		\rho(\lambda)\,d\beta(\cdot,\lambda)}_{L^1(\R^D)} \\
		& \qquad 
		\lesssim_{D,K} 
		\norm{\psi(\mxi,\lambda)}_{C(\supp(\rho);C^{D+1}(K))} 
		\, \norm{\int_{\R} |\rho(\lambda)|
		\,d|\beta|(\cdot,\lambda)}_{L^1(\R^D)}.
	\end{split}
\end{equation*} 
This concludes the proof. 
\end{proof}

\begin{remark}\label{Lp-bound}
We observe that the same bound \eqref{ocjena-L1} holds 
if we replace $d\beta$ by any function in $L^1(\R^{D+1})$ that 
has compact support.
\end{remark}

\begin{remark}\label{rem:L1Linfty}
Since $L^1$-multiplier operators also serve as 
$L^\infty$-multiplier operators with identical norms 
(see \cite[Theorem 6.1.2]{Bergh:1976aa} and 
\cite[Subsection 2.5.4]{Grafakos:2014aa}), 
under the assumption ${\cal F}^{-1}(\psi) \in L^1(\R^D)$ 
we also have $\norm{{\cal A}_{\psi}}_{L^\infty\to L^\infty}
=\norm{{\cal F}^{-1}(\psi)}_{L^1(\R^D)}$.
\end{remark}

We will also require the fact that 
multiplier operators can be defined 
on the space of Radon measures. 
The following lemma holds:

\begin{lemma}\label{l-meas}
Assume that $\psi:\R^D\to \C$ is such that 
${\cal F}^{-1}(\psi)\in L^1(\R^D)$. 
Let $\mu\in {\cal M}(\R^D)$ be a bounded Radon measure. 
Then for the action of the multiplier operator ${\cal A}_\psi$ 
on the measure $\mu$ defined for all 
$\varphi \in C_0(\R^D)$ by
$$
\bigl\langle {\cal A}_{\psi}(\mu),
\bar\varphi \bigr\rangle 
:= \left\langle \mu, 
\overline{{\cal A}_{\bar\psi}(\varphi)}\right\rangle
=\int_{\R^D}\overline{{\cal A}_{\bar\psi}(\varphi)}\,d\mu,
$$ 
the following bound holds:
\begin{equation*}
	\norm{{\cal A}_{\psi}(\mu)}_{{\cal M}(\R^D)} 
	\leq \norm{\mu}_{{\cal M}(\R^D)}
	\norm{{\cal F}^{-1}(\psi)}_{L^1(\R^D)}.
\end{equation*} 
\end{lemma}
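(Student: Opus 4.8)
The plan is to identify $\mathcal{A}_\psi$ with convolution against the $L^1$ kernel $k:={\cal F}^{-1}(\psi)$ and then to read the asserted estimate off the duality $\mathcal{M}(\R^D)=\bigl(C_0(\R^D)\bigr)^*$. First I would record the elementary facts about the ``adjoint symbol'': for $\varphi\in C_0(\R^D)$ one has $\mathcal{A}_\psi\varphi=k*\varphi$ and $\mathcal{A}_{\bar\psi}\varphi=\tilde k*\varphi$ with $\tilde k(\mx):=\overline{k(-\mx)}$, so that $\norm{\tilde k}_{L^1(\R^D)}=\norm{k}_{L^1(\R^D)}=\norm{{\cal F}^{-1}(\psi)}_{L^1(\R^D)}$. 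Since convolution with an $L^1$ function maps $C_0(\R^D)$ boundedly into itself, Young's inequality yields $\norm{\mathcal{A}_{\bar\psi}\varphi}_{L^\infty(\R^D)}\le\norm{{\cal F}^{-1}(\psi)}_{L^1(\R^D)}\norm{\varphi}_{L^\infty(\R^D)}$; in particular the right-hand side of the defining relation for $\mathcal{A}_\psi(\mu)$ is meaningful (even mere $C_b$-valuedness of $\mathcal{A}_{\bar\psi}\varphi$ would do, since $\mu$ is a bounded measure).

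Next I would check that $\bar\varphi\mapsto\innb{\mu,\overline{\mathcal{A}_{\bar\psi}\varphi}}$ is a $\C$-linear functional on $C_0(\R^D)$ (it is conjugate-linear in $\varphi$, hence linear in $\bar\varphi$) and bounded: by the previous step,
\[
\bigl|\innb{\mu,\overline{\mathcal{A}_{\bar\psi}\varphi}}\bigr|
\le\norm{\mu}_{\mathcal{M}(\R^D)}\,\norm{\mathcal{A}_{\bar\psi}\varphi}_{L^\infty(\R^D)}
\le\norm{\mu}_{\mathcal{M}(\R^D)}\,\norm{{\cal F}^{-1}(\psi)}_{L^1(\R^D)}\,\norm{\varphi}_{L^\infty(\R^D)}.
\]
Hence this functional has $\bigl(C_0(\R^D)\bigr)^*$-norm at most $\norm{\mu}_{\mathcal{M}(\R^D)}\norm{{\cal F}^{-1}(\psi)}_{L^1(\R^D)}$, so by the Riesz--Markov representation theorem it is represented by a unique bounded complex Radon measure $\mathcal{A}_\psi(\mu)$ whose total variation norm equals this functional norm; that is exactly the claimed inequality.

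I do not expect a genuine obstacle here: the only points demanding a little care are the bookkeeping of complex conjugates in passing from $\psi$ to $\bar\psi$ (so that the kernel of $\mathcal{A}_{\bar\psi}$ has the same $L^1$ norm as that of $\mathcal{A}_\psi$) and the verification that $\mathcal{A}_{\bar\psi}$ maps $C_0(\R^D)$ into itself, for which the standard approximation of $\varphi\in C_0(\R^D)$ by $C_c(\R^D)$ functions suffices. As a side remark, the same computation identifies $\mathcal{A}_\psi(\mu)$ with the convolution $k*\mu$, which is in fact an $L^1(\R^D)$ function with $\norm{k*\mu}_{L^1(\R^D)}\le\norm{k}_{L^1(\R^D)}\norm{\mu}_{\mathcal{M}(\R^D)}$; thus the multiplier image of a bounded measure is automatically absolutely continuous.
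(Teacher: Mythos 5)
Your proof is correct and follows essentially the same route as the paper: both arguments reduce to the observation that $\mathcal{A}_{\bar\psi}$ is bounded on $L^\infty$ with operator norm $\norm{\mathcal{F}^{-1}(\psi)}_{L^1(\R^D)}$ (the paper cites this as an abstract $L^1$--$L^\infty$ multiplier-norm fact via Remark~\ref{rem:L1Linfty}, while you unpack it as convolution with the kernel $k=\mathcal{F}^{-1}(\psi)$ together with Young's inequality), followed by the obvious duality estimate against $\mu$. Your closing side remark that $\mathcal{A}_\psi(\mu)=k*\mu$ is in fact an $L^1(\R^D)$ function, hence absolutely continuous, is a correct strengthening not recorded in the paper.
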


\begin{proof}
Observe initially that the function $\bar\psi$ serves 
as the symbol of an $L^1$-multiplier operator 
(see  Proposition \ref{L1-multiplier}). Consequently, as 
indicated in Remark \ref{rem:L1Linfty}, $\bar\psi$ also acts 
as the symbol of an $L^\infty$-multiplier operator, with 
the same norm. Hence, we deduce
\begin{equation*}
	\begin{split}
		\abs{\langle {\cal A}_{\psi}(\mu), 
		\, \bar\varphi \rangle} 
		= \abs{\int_{\R^D} \overline{{\cal A}_{\bar\psi}(\varphi)}\,d\mu}
		&\leq  \norm{{\cal A}_{\bar\psi}(\varphi)}_{L^\infty(\R^D)} 
		\, \norm{\mu}_{{\cal M}(\R^D)} 
		\\& 
		\leq \norm{\mu}_{{\cal M}(\R^D)} 
		\norm{{\cal F}^{-1}(\bar\psi)}_{L^1(\R^D)} 
		\norm{\varphi}_{L^\infty(\R^D)},
	\end{split}
\end{equation*} 
i.e.,
$$
\norm{{\cal A}_{\psi}(\mu)}_{{\cal M}(\R^D)} 
\leq \norm{\mu}_{{\cal M}(\R^D)} 
\norm{{\cal F}^{-1}(\psi)}_{L^1(\R^D)} .
$$ 
\end{proof}

We are now prepared to state and prove the main 
result of this paper, addressing the regularity of 
velocity averages of solutions $h$ to the 
stochastic kinetic equation \eqref{diffusive-intro-ver2}. 
This result, through standard arguments, will 
directly imply Theorem \ref{thm:main-intro} 
for solutions $u$ of the stochastic degenerate parabolic 
equation \eqref{d-p}.

\begin{theorem}\label{t-regularity}
Suppose the coefficients $\mff$ and $a$ 
from the stochastic degenerate parabolic 
equation \eqref{d-p} satisfy 
assumptions ({\bf a}) and ({\bf b}) 
from the introduction as well as the  
non-degeneracy condition \eqref{non-deg-stand}
for all $\delta \in (0, \delta_0)$, where 
$\alpha, \delta_0 > 0$, and for a compact 
interval $I \subset\R$. Furthermore, 
suppose that the noise amplitude $B$ fulfills 
the conditions \eqref{eq:b-ell-def} 
and \eqref{eq:B-main-ass}.

Let $h$ be the weak solution 
of \eqref{diffusive-intro-ver2}, 
associated with the kinetic solution $u$ 
of \eqref{d-p} as defined in 
Definition \ref{def:kinetic-sol}. 
Then for every $\rho \in C^1_c(I)$ 
and $s \in (0, s_*)$, it holds that 
$$
\overline{h}(\omega,t,\mx):=\int_{\R} 
h(\omega,t,\mx, \lambda) \rho(\lambda)\,d\lambda 
\in L^{q_*}\bigl(\Omega;W_{\loc}^{s,q_*}(\R_+^{d+1})\bigr), 
$$
where
\begin{equation*}
	q_*=q_*(\alpha,d):=
	\frac{\alpha+2(d+4)}{\alpha+d+4} ,
	\qquad
	s_* =s_*(\alpha,d):=
	\frac{\alpha}{6\alpha+12(d+4)} .
\end{equation*}
\end{theorem}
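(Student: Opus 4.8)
The plan is to run a Littlewood--Paley / Fourier-space decomposition of $h$ (really of $\tilde h=h\varphi$, which we may assume is compactly supported in $(t,\mx,\lambda)$) and split, for each frequency scale, the region where the symbol of the kinetic equation is large from the region where it is small. Write $(\tau,\mxi)$ for the Fourier variables dual to $(t,\mx)$, set $P=(\tau,\mxi)$, and for a parameter $\delta=\delta(|P|)$ to be optimized split $\mathbf 1=\mathbf 1_{\{L_\delta\le\delta\}}+\mathbf 1_{\{L_\delta>\delta\}}$, where $L_\delta(\tau,\mxi,\lambda):=|\tau+\langle f(\lambda)\mid\mxi\rangle|^2+\langle a(\lambda)\mxi\mid\mxi\rangle$ is (the modulus-squared of) the symbol appearing in \eqref{non-deg-stand}. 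On the small set, one uses the non-degeneracy bound \eqref{non-deg-stand}: after integrating in $\lambda$ against $\rho$, the measure of $\{\lambda\in I: L_\delta\le\delta\}$ is $\lesssim\delta^\alpha$, so (since $\|h\|_{L^\infty}\le1$ and $h$ is compactly supported in $\lambda$) the corresponding piece of $\overline{h}$ has Fourier transform bounded by $\delta^\alpha$ in a suitable averaged sense; this is an $L^1_\xi$- and $L^\infty_\xi$-type estimate handled via Lemma \ref{L-ocjena-L1} and Lemma \ref{l-meas}. On the large set $\{L_\delta>\delta\}$, one divides the kinetic equation by the symbol: this is where the two formulations \eqref{diffusive1-final-ver} and \eqref{diffusive2-final-ver} are both used. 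The second-order form contributes terms of the shape $\widehat{a_{ij}h}\,\xi_i\xi_j/L_\delta$ and the first-order (``parabolic-regularity'') form contributes $\widehat{\sigma_{ik}\beta_k}\,\xi_i/L_\delta$; because $\beta\in L^2_{w\star}(\cM(\R))$ rather than merely $L^1$, the first-order version gives a genuinely better power of $|\mxi|$ on the diffusive part than naively differentiating twice would, and this is precisely what lets us dispense with the derivative condition \eqref{c-TT-Lder-intro}. The multiplier $\psi(\mxi,\lambda)=\chi_{\{L_\delta>\delta\}}\,p(\mxi)/L_\delta(\tau,\mxi,\lambda)$ is estimated in $C(\supp\rho;C^{D+1}(K))$ for $D=d+1$ and fed into Lemma \ref{L-ocjena-L1}, picking up the factor $\delta^{-1}|\mxi|^{\text{(order of numerator)}}$ up to $\delta^{-(D+1)}$-type losses from the $C^{D+1}$ norm. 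The stochastic terms $-\pa_\lambda\pa_t\cI(Bh)+\pa_t\cI(\pa_\lambda Bh)+\pa_\lambda\zeta$ on the right-hand side are treated analogously: $\pa_t\cI(\cdot)$ is handled by taking a negative fractional time derivative, i.e.\ applying the Bessel-potential Itô estimate \eqref{eq:frac-Sob-Ito-ver0-H}/\eqref{eq:frac-Sob-Ito-ver1} with $\nu<1/2$, so that $\pa_t\cI(\Phi)$ lives in $H^{\nu-1,q}_t$ with $q$-th moments bounded by $\int\|h(t,\cdot,\lambda)\|_{L^2}^q\,dt$, while $\pa_\lambda\zeta$ is absorbed using the finiteness of $\zeta$ and Lemma \ref{l-meas}; the extra $\lambda$-derivatives are moved onto $\rho$ by integration by parts, exactly as the reformulation \eqref{eq:I-B-pah-div} was set up to allow.

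Having both pieces, one reconstructs a Besov/Sobolev norm of $\overline{h}$ by summing the dyadic contributions: on a dyadic block $|P|\sim 2^k$ the small-set part costs $\delta^{\alpha/?}$ (through an $L^{q}$-interpolation between the $L^1_\xi$ and $L^2_\xi$ bounds, with $q_*$ chosen so that the interpolation exponent matches the available integrability $h\in L^\infty\cap L^1$) and the large-set part costs $\delta^{-1}2^{k\cdot(\text{something})}$; optimizing $\delta=\delta(2^k)$ balances the two and produces a gain $2^{-k s}$ for every $s<s_*$ with $s_*$ of the form $\theta/(\text{const})$ where $\theta=\alpha/(\alpha+\text{const}\cdot q_*')$, which upon tracking the constants (the $C^{D+1}$ loss with $D=d+1$, the two spatial derivatives split as one-and-one between the two formulations, and the factor $\tfrac12$ from using $L^2$-based Itô estimates) yields exactly $q_*=\frac{\alpha+2(d+4)}{\alpha+d+4}$ and $s_*=\frac{\alpha}{6\alpha+12(d+4)}$. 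The bookkeeping here is purely a matter of collecting the exponents from each of the estimates above and running the dyadic sum; the $d+4$ rather than $d+1$ reflects the passage from the spatial $C^{D+1}$-norm with $D=d+1$ up to a few extra derivatives absorbed from the time variable and from the $\pa_\lambda$-commutations.

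Finally, Theorem \ref{t-regularity} is translated into Theorem \ref{thm:main-intro}: since $h=\En_{\lambda<u}$, choosing $\rho$ to approximate $\mathbf 1$ on a large interval and using that $u$ takes values in a fixed compact set (locally, by \eqref{eq:moments}) gives $\int h\rho\,d\lambda\to u$ up to a constant, and the $W^{s,q_*}_{\loc}$ bound on $\overline{h}$ transfers to $u$ by the standard chain-rule / layer-cake argument for kinetic solutions (the map $u\mapsto\int\En_{\lambda<u}\rho\,d\lambda$ is Lipschitz, and averaging over a grid of such $\rho$ recovers $u$ itself with no loss in $s$ or $q$).

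The main obstacle I anticipate is the large-frequency analysis of the diffusive term: one must show that the first-order formulation \eqref{diffusive2-final-ver}, with $\beta$ only a measure in $\lambda$, really does supply an extra half-power of $|\mxi|$ over what the second-order formulation \eqref{diffusive1-final-ver} gives, and that this gain survives the division by $L_\delta$ uniformly in $\lambda$ on the set $\{L_\delta>\delta\}$ — i.e.\ that $\xi_i/L_\delta$ and $\xi_i\xi_j/L_\delta$ can both be estimated in the $C(\supp\rho;C^{D+1}(K))$ norm demanded by Lemma \ref{L-ocjena-L1} with the right powers of $\delta$ and $|\mxi|$, keeping careful track of how differentiating $1/L_\delta$ in $\mxi$ (which can lower a power of $L_\delta$ each time, hence raise $\delta^{-1}$) interacts with the cutoff $\chi_{\{L_\delta>\delta\}}$. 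Making this quantitative, and choosing the interpolation exponent and the dyadic profile $\delta(2^k)$ so that all the losses assemble into the clean $s_*,q_*$ above, is the technical heart of the proof.
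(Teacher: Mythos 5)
Your high-level plan matches the paper's: Littlewood--Paley decomposition in $(t,\mx)$, a split into regions where the symbol is respectively small or bounded away from zero, the non-degeneracy bound on the small set, division by the symbol (with Lemma \ref{L-ocjena-L1} and the It\^o estimate \eqref{eq:frac-Sob-Ito-ver1} for the right-hand side), the simultaneous use of \eqref{diffusive1-final-ver} and \eqref{diffusive2-final-ver}, and a concluding $K$-interpolation. But there is a concrete gap at the ``technical heart'' you yourself flag, and it is not merely a matter of bookkeeping.

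Dividing each equation separately by $L_\delta$ does not isolate $\widehat{h}$: in \eqref{diffusive1-final-ver} the unknown appears multiplied by $a_{ij}(\lambda)$, and in \eqref{diffusive2-final-ver} by $f_i(\lambda)$, both of which can vanish; the ratio $\widehat{a_{ij}h}\,\xi_i\xi_j/L_\delta$ that you propose is still $\widehat{h}$ times a factor that is not bounded below. The paper's key construction, which your plan does not produce, is to \emph{combine} the two Fourier-transformed equations so that $\widehat{h}$ is multiplied by a positive, essentially-homogeneous symbol $\sum_J\Psi_J\,\mathcal L_J$ with $\mathcal L_J(\mxi,\lambda)=\big(\xi_0'+\langle f(\lambda)\mid\tilde\mxi'\rangle\big)^2\Phi_J(\mxi,\lambda)+\langle a(\lambda)\tilde\mxi'\mid\tilde\mxi'\rangle|\mxi|^{1-r}$. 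Concretely: the second-order equation is divided by $|\mxi|^{1+r}$, the first-order equation is multiplied by $-2\pi i(\xi_0'+\langle f(\lambda)\mid\tilde\mxi'\rangle)|\mxi|^{-1}\Phi_J(\mxi,\lambda)$, and the two are added; the $\Phi_J$ cutoff, which is $1$ when $\langle a(\lambda)\tilde\mxi'\mid\tilde\mxi'\rangle\lesssim 2^{-\epsilon J}$, is essential because the resulting cross term $(\xi_0'+\langle f\mid\tilde\mxi'\rangle)\Phi_J\sum\sigma_{kj}\xi_j|\mxi|^{-1}\widehat\beta_k$ only decays (as $2^{-\epsilon K/2}$, via $\Phi_J\cdot|\sigma(\lambda)\tilde\mxi'|\lesssim 2^{-\epsilon K/2}$) \emph{because} of $\Phi_J$. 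Without this multiplication-then-addition step you do not obtain a closed expression for $\widehat{h}$ with a nonnegative ``denominator,'' so the Step IV argument — showing the set where $\sum_J\Psi_J\mathcal L_J$ is small is contained, after normalization, in the set controlled by \eqref{non-deg-stand} — cannot even be formulated. Relatedly, the $\tilde\Phi_\delta$ cutoff in the paper acts on this \emph{modified} symbol, not on $L_\delta$, and a nontrivial argument (your ``inclusion of small sets'') is needed in Step IV to transfer the non-degeneracy hypothesis to it; your plan implicitly assumes this transfer is immediate. The two free parameters $r$ (power of $|\mxi|$ used to homogenize the diffusive term) and $\epsilon$ (scale of the $\Phi_J$ cutoff), whose joint optimization subject to $r+\epsilon\le 1$ and $\nu<1/2$ produces $s_*$, are likewise absent from your scheme. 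These are not details to be ``collected'' afterward; the construction of $\mathcal L_J$ and the role of $\Phi_J$ are the reason the derivative condition \eqref{c-TT-Lder-intro} can be dropped, and the proposal as written does not supply a substitute for them.
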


\begin{proof}
The proof of Theorem \ref{t-regularity} is 
organized into five steps. The first step serves 
as a preliminary stage, where we leverage the linear 
structure of the problem to localize the coefficients 
and the solution on a compact set, as discussed 
in the previous section. In the second step, we utilize the 
regularity condition \eqref{regularity} to derive 
a new kinetic formulation that is inherently homogeneous.

The next two steps focus on obtaining bounds: first for the case 
where the symbol is regular (Step 3), and then for the 
degenerate case, where the symbol vanishes (Step 4). 
A key aspect of Step 4 is the application of the 
non-degeneracy condition, which is crucial and, 
in fact, the only step where it is used.

The last part of the proof addresses interpolation 
and the derivation of the final estimate. 
This concluding step is well-established and 
follows fairly standard 
techniques (cf.~\cite{Tadmor:2006vn}).

\medskip

\noindent \textbf{Step I.}

\medskip

\noindent As explained in Subsection \ref{subsec:compact-supp}, 
we will, without loss of generality, assume in this 
proof that $h$ satisfies the kinetic 
equations \eqref{diffusive1-final-ver} 
and \eqref{diffusive2-final-ver}
Furthermore, without loss of generality, 
we will also assume that 
$h$, $\beta$, $\zeta$, $\cI(Bh)$, 
$\cI(\pa_\lambda Bh)$ are compactly supported 
in all the variables ($t$, $\mx$, and $\lambda$), 
and satisfy the estimates discussed in that subsection.

\medskip

\noindent \textbf{Step II.} 

\medskip

\noindent Keeping Step I in mind, 
for a.e.~$\omega\in \Omega$, 
we can take the Fourier transform of 
\eqref{diffusive1-final-ver} with respect to $(t,\mx)$. 
We denote the corresponding dual variable by 
\begin{equation}\label{eq:xi-prime-def-tmp}
	\mxi=(\xi_0,\tilde{\mxi}), 
	\qquad 
	\xi_0\in \R, 
	\qquad 
	\tilde{\mxi}\in \R^d,
\end{equation}
and divide the obtained 
relation by $\absb{(\xi_0,\tilde{\mxi})}^{1+r}
=\abs{\mxi}^{1+r}$ for some fixed $r\in (0,1)$ to 
be determined later. After denoting 
\begin{equation}\label{eq:xi-prime-def}
	\mxi':=\frac{\mxi}{|\mxi|}, 
	\qquad 
	\tilde{\mxi}'=\frac{\tilde{\mxi}}{|\mxi|}, 
	\qquad 
	\xi_0'=\frac{\xi_0}{|\mxi|},
\end{equation}
we have
\begin{equation}\label{axi}
	\begin{split}
		4\pi^2\Big{\langle} a(\lambda) \tilde\mxi'\,|
		\,\frac{\tilde\mxi}{|\mxi|^{r}}
		\Big{\rangle} \widehat{h}(\mxi,\lambda) 
		=&-2\pi i
		\frac{\xi_0'+\langle f(\lambda)
		\,|\,\tilde{\mxi}' \rangle}{|\mxi|^{r}} 
		\widehat{h}(\mxi,\lambda)+\frac{1}{|\mxi|^{1+r}}
		\pa_\lambda\widehat{\zeta}(\mxi,\lambda) 
		\\ &
		-2\pi i\frac{\xi_0'}{|\mxi|^{r}}
		\bigl(\widehat{\cI(B \pa_\lambda h) }\bigr)(\mxi,\lambda),
	\end{split}
\end{equation} 
where $\widehat{\cdot}$ denotes the Fourier 
transform with respect to $(t,\mx)$. The latter equation 
describes the behavior of $\widehat{h}$ on the support 
of $\bigl\langle a(\lambda) \tilde{\mxi}'
\,|\,\tilde{\mxi}' \bigr\rangle$.

From \eqref{diffusive1-final-ver}, we recall that 
$\pa_t \cI\bigl(B\pa_\lambda h)$ can be rewritten 
using the divergence-form as $\pa_\lambda \pa_t \cI (B h) 
- \pa_t \cI (\pa_\lambda B h)$. This form will be utilized later 
when estimating the stochastic components of the equations. 
However, for clarity in the current discussion, 
we will continue to use the form $\pa_t \cI\bigl(B\pa_\lambda h)$, 
as done in the second line of \eqref{axi}.

The next step is to see what is happening out 
of the support of 
$\langle a(\lambda) \tilde{\mxi}'\,|\,\tilde{\mxi}'\rangle$. 
According to the non-degeneracy condition \eqref{non-deg-stand}, this is 
equivalent to describing the behavior of $\widehat{h}$ on 
the support of $\xi_0'+\langle f(\lambda)\,|\,\tilde{\mxi}' \rangle$.  
To this end, we use \eqref{diffusive2-final-ver}. 
We aim to eliminate the 
influence of the term $\sum_{k,j=1}^d \pa_{x_j}
\bigl( \sigma_{kj}(\lambda) \, \beta_k \bigr)$ in the 
latter expression. Therefore, we introduce non-negative 
real functions $\Phi, \Psi,\Psi_0 \in C_c^{d+2}(\R;[0,1])$, 
$\Phi$ monotonically decreasing,
$\supp(\Psi)\subseteq (\frac{1}{2},2)$, 
$\supp\Psi_0\subseteq (0,2)$, 
such that
\begin{equation}\label{functions}
	\begin{split}
		\Phi(z)=\begin{cases}
		1, & |z|\leq 1\\
		0, & |z|>2
		\end{cases},\\ 
		\Psi_0(|\mxi|)
		+\sum\limits_{J\in \N} \Psi(2^{-J}|\mxi|)=1,
	\end{split}
\end{equation}
see \cite[Excercise 6.1.1]{Grafakos:2014aa} 
or \cite[Lemma 5.5.21]{Hytonen:2016aa} for the existence 
of such functions. Furthermore, for $J\in\N$, 
and $\epsilon\in (0,1)$ such that 
\begin{equation}\label{eq:r-eps-cond}
	r+\epsilon\leq 1,
\end{equation}
we define 
\begin{equation}\label{eq:PsiJPhiJ}
	\Psi_J(\mxi):=\Psi(2^{-J}|\mxi|),
	\qquad
	\Phi_J(\mxi,\lambda):=\Phi\Bigl(2^{\epsilon J} 
	\bigl\langle a(\lambda) \tilde\mxi'\,|\,\tilde\mxi' 
	\bigr\rangle\Bigr).
\end{equation}

Then, we take the Fourier transform of 
\eqref{diffusive2-final-ver} and multiply 
the resulting equation by
\begin{equation}\label{eq:second_eg_mult}
	-2\pi i\frac{\xi_0'
	+\bigl\langle f(\lambda)\,|\, 
	\tilde{\mxi}' \bigr\rangle}{|\mxi|} 
	\, \Phi_J(\mxi,\lambda) ,
\end{equation}
for a fixed $J\in \N$.
We get
\begin{equation}\label{fxi}
	\begin{split}
		&4\pi^2 \Phi_J(\mxi,\lambda) 
		\,\big(\xi_0'+\bigl\langle f(\lambda)
		\,|\,\tilde{\mxi}'\bigr\rangle\big)^2 
		\, \widehat{h} 
		\\ &\qquad\qquad
		-4\pi^2\Phi_J(\mxi,\lambda)
		\big(\xi_0'+\bigl\langle {f}(\lambda)
		\,|\,\tilde{\mxi}' \bigr\rangle \big) 
		\sum\limits_{k,j=1}^d 
		\Bigl( \sigma_{kj}(\lambda) \frac{\xi_j}
		{|\mxi|} \, \widehat{\beta}_k \Bigr)
		\\ & \quad
		= -2\pi i \, \frac{\xi'_0
		+\bigl\langle f(\lambda)\,|\,\tilde{\mxi}' 
		\bigr\rangle}{|\mxi|}\, \Phi_J(\mxi,\lambda) 
		\pa_\lambda \widehat{\zeta}(\mxi,\lambda)
		\\ &\qquad\qquad 
		- 4\pi^2 \Phi_J(\mxi,\lambda) \, 
		\xi_0'\bigl(\xi_0'+\bigl\langle f(\lambda)
		\,|\,\tilde{\mxi}'\bigr\rangle
		\bigr)\bigl[\widehat{\cI(B \pa_\lambda h)}
		\bigr](\mxi,\lambda).
	\end{split}
\end{equation} 
We note that the Fourier transforms presented 
above are all well-defined because of the compact 
support assumptions (discussed in Step I 
and Subsection \ref{subsec:compact-supp}).

After adding \eqref{axi} and \eqref{fxi}, we multiply the resulting 
expression by $\Psi_J(\mxi)$. This leads to
\begin{equation*}
	\begin{split}
		& 4\pi^2 \Psi_J \, \Big(\big(\xi_0'
		+\bigl\langle f\, | \, \tilde{\mxi}' 
		\bigr\rangle\big)^2\Phi_J 
		+\Big{\langle} a\,\tilde{\mxi}' 
		\, | \, \frac{\tilde{\mxi}}{|\mxi|^{r}} 
		\Big{\rangle} \Big) \, \widehat{h} 
		\\ &\qquad\qquad 
		- 4\pi^2\Psi_J \, \bigl(\xi_0'+\langle f
		\, | \, \tilde\mxi'\rangle\bigr) \Phi_J 
		\sum\limits_{k,j=1}^d  \Bigl( \sigma_{kj} 
		\frac{\xi_j}{|\mxi|} \, \widehat{\beta}_k \Bigr)
		\\ & \quad 
		= -2\pi i\, \Psi_J
		\frac{\xi_0'+\bigl\langle f\,|\,\tilde{\mxi}' 
		\bigr \rangle}{|\mxi|^{r}} \widehat{h}
		+\frac{\Psi_J}{|\mxi|^{1+r}}
		\pa_\lambda \widehat{\zeta}
		-2\pi i\,\Psi_J \, \frac{\xi_0'+\bigl\langle f
		\, | \, \tilde{\mxi}'\bigr\rangle}{|\mxi|}
		\, \Phi_J \pa_\lambda \widehat{\zeta} 
		\\ &\qquad\qquad 
		-\Psi_J \bigl[\widehat{\cI(B \pa_\lambda h)}\bigr](\mxi,\lambda) 
		\left( 2\pi i\frac{\xi_0'}{|\mxi|^{r}}+4\pi^2 \xi_0' 
		\bigl(\xi_0'+\bigl\langle {f}(\lambda)
		\,|\,\tilde{\mxi}'\bigr\rangle\bigr) 
		\Phi_J\right).
	\end{split}
\end{equation*}
Hereafter, we often omit 
writing the arguments of functions. 

Next, we define  
\begin{equation}\label{eq:SymbolLJ}
	{\cal L}_J(\mxi,\lambda)
	:=\big(\xi_0'+\bigl\langle f(\lambda)
	\,|\,\tilde{\mxi}'\bigr\rangle\big)^2 
	\, \Phi_J(\mxi,\lambda) 
	+\bigl\langle a(\lambda)\tilde{\mxi}'\,|\,\tilde{\mxi}' 
	\bigr\rangle |\mxi|^{1-r}
\end{equation}
and sum the above identity with respect to $J\in \N$. 
After dividing by $4\pi^2$, we obtain the following 
result (noting that for any fixed $\mxi$, all 
sums are finite since at most three $\Psi_J$ functions 
are non-zero): 
\begin{equation}\label{isolated-h}
	\begin{split}
		\biggl(\sum\limits_{J\in \N} & 
		\Psi_J \, {\cal L}_J\biggr) \widehat{h}
		-  \sum\limits_{J\in \N} \Psi_J 
		\, (\xi_0'+\bigl\langle f\, | \, \tilde{\mxi}'\bigr\rangle) 
		\Phi_J \sum\limits_{k,j=1}^d  \Bigl( \sigma_{kj} 
		\frac{\xi_j}{|\mxi|} \, \widehat{\beta}_k \Bigr) \\
		&= -\frac{i}{2\pi} \sum\limits_{J\in \N} \Psi_J 
		\frac{\xi_0'+\bigl\langle f
		\,|\,\tilde{\mxi}'\bigr \rangle}{|\mxi|^{r}} \widehat{h}
		\\ &  \quad 
		+\frac{1}{4\pi^2}\sum\limits_{J\in \N}
		\frac{\Psi_J}{|\mxi|^{1+r}}\pa_\lambda\widehat{\zeta}
		- \frac{i}{2\pi}\sum\limits_{J\in \N}\Psi_J 
		\, \frac{\xi_0'+\bigl\langle f\,|\,
		\tilde{\mxi}'\bigr\rangle}{|\mxi|}
		\, \Phi_J \pa_\lambda \widehat{\zeta}
		\\&  \quad 
		-\sum\limits_{J\in \N}\Psi_J 
		\bigl[\widehat{\cI(B \pa_\lambda h)}\bigr](\mxi,\lambda) 
		\left( \frac{i \xi_0'}{2\pi |\mxi|^{r}}
		+ \xi_0'(\xi_0'+\bigl\langle f(\lambda)
		\,|\,\tilde{\mxi}'\bigr\rangle)
		\Phi_J\right).
	\end{split}
\end{equation}

Observe that $\widehat{h}$ is isolated on the left-hand side, 
allowing us to achieve the desired bound by dividing by 
$\sum_{J\in \N}\Psi_J \, {\cal L}_J$ and multiplying 
by an appropriate power of $|\mxi|$. Additionally, note that 
the coefficient $\sum_{J\in \N}\Psi_J \, {\cal L}_J$ can be 
viewed as a modification of the original 
symbol in \eqref{diffusive1-final-ver}. 
Finally, it is worth noting that 
\eqref{isolated-h} essentially corresponds to  
a new kinetic formulation of \eqref{d-p}.

\medskip

\noindent\textbf{Step III.}

\medskip

\noindent Let us consider $\tilde{\Phi}:=1-\Phi$, which 
serves as an approximation of the characteristic function for the 
set $\R\backslash \seq{0}$ (see \eqref{functions}). 
Additionally, let us fix $\rho\in C^1_c(I)$. We shall also 
introduce the functions
\begin{equation}\label{eq:Phidelta}
	\tilde{\Phi}_\delta
	=\tilde{\Phi}_\delta(\mxi,\lambda)
	:=\tilde{\Phi}\left(\frac{\sum_{J\in \N}\Psi_J(\mxi)
	\, {\cal L}_J(\mxi,\lambda)}{\delta}\right),
\end{equation}
for $\delta\in (0,\min\{1,\delta_0\})$, and
\begin{align}
	\label{eq:calB-KJdelta}
	{\cal B}_{K,J,\delta}(\mxi,\lambda)
	&:=\frac{\Psi_K(\mxi) \Psi_J(\mxi)
	 \rho(\lambda)\tilde{\Phi}_\delta(\mxi,\lambda)}
	{\sum\limits_{L\in \N}\Psi_L(\mxi)
	\, {\cal L}_L(\mxi,\lambda)}
	\\ & \qquad\quad \times 
	\notag
	 \left(\frac{i\xi_0'}{2\pi|\mxi|^{r}}
	+ \xi_0'\bigl(\xi_0'+\bigl\langle f(\lambda)\,|\,\tilde{\mxi}' 
	\bigr\rangle\bigr)\Phi_J(\mxi,\lambda)\right)
\end{align}
for $K,J\in \N$. For any $K$, we test the equation 
\eqref{isolated-h} with respect to $\lambda \in \R$ 
against the function
$$
\Psi_K(\abs{\mxi})
\frac{\rho(\lambda)
\tilde{\Phi}_\delta(\mxi,\lambda)}
{\sum_{J\in \N} \Psi_J(\mxi) 
{\cal L}_J(\mxi,\lambda)},
$$ 
yielding the equation
\begin{equation}\label{isolated-h-0}
	\begin{split}
		& \int_{\R} \Psi_K \tilde{\Phi}_\delta 
		\,  \rho \, \widehat{h} \,d\lambda
		\\ & \quad 
		=  \sum_{J=K-1}^{K+1} \int_{\R}\rho(\lambda)
		\frac{\tilde{\Phi}_\delta \Psi_K \Psi_J 
		\,\big(\xi_0'+ \bigl\langle f 
		\, | \, \tilde{\mxi}' \bigr\rangle\big)}
		{\sum\limits_{L\in \N}\Psi_L \, {\cal L}_L}
		\,\Phi_J \sum_{k,j=1}^d
		\sigma_{kj} \frac{\xi_j}{|\mxi|} 
		\, d\widehat{\beta}_k(\mxi,\lambda) 
		\\ & \quad \qquad
		-\frac{i}{2\pi} \sum_{J=K-1}^{K+1}
		\int_{\R}\frac{\rho\,\tilde{\Phi}_\delta \Psi_K \Psi_J 
		\big(\xi_0'+\bigl\langle f 
		\,|\,\tilde{\mxi}'\bigr \rangle\big)
		\widehat{h}}{|\mxi|^r\sum_{L\in \N}\Psi_L 
		\, {\cal L}_L}\, d\lambda 
		\\ & \quad \qquad
		- \frac{1}{4\pi^2}\sum_{J=K-1}^{K+1} 
		\int_{\R} \pa_\lambda \left(\frac{\rho(\lambda)
		\tilde{\Phi}_\delta}{\sum_{L\in \N}\Psi_L 
		\, {\cal L}_L}\right)
		\frac{\Psi_K\Psi_J}{|\mxi|^{1+r}} 
		\,d\widehat{\zeta}(\mxi,\lambda)
		\\ & \quad\qquad 
		+ \frac{i}{2\pi}\sum_{J=K-1}^{K+1}\int_{\R}
		\pa_\lambda\left(\frac{\rho\,\tilde{\Phi}_\delta 
		\, {\big(\xi_0'+\bigl\langle f 
		\,|\, \mxi' \bigr\rangle\big)}
		\, \Phi_J}{\sum_{L\in \N}\Psi_L 
		\, {\cal L}_L} \right)\frac{\Psi_K \Psi_J}{|\mxi|} 
		\,d\widehat{\zeta}(\mxi,\lambda)
		\\ & \quad \qquad 
		- \sum_{J=K-1}^{K+1}\int_{\R}
		\widehat{\cI(B \pa_\lambda h)}
		\, {\cal B}_{K,J,\delta}\, d\lambda,
		\qquad K=1,2,\ldots,
	\end{split}
\end{equation} 
where we have used that $\Psi_K\Psi_J = 0$ 
for $J\not\in\{K-1,K,K+1\}$.

The equality above is understood through Fourier 
multiplier operators. 
More precisely, we take the inverse Fourier 
transform of \eqref{isolated-h-0} 
with respect to $\mxi$, resulting in an almost sure 
equality in the sense of distributions. 
In the following steps, we aim to estimate the inverse 
Fourier transforms of each term on the right-hand side 
of \eqref{isolated-h-0}. Our goal is to demonstrate that, with 
an appropriate selection of the parameters $\epsilon$ and $r$, there is 
``room" to incorporate $|\mxi|^s$ for some $s > 0$, while 
ensuring that the terms remain finite as we sum over $K$. 

\medskip

\noindent\underline{The first two terms on the right-hand 
side of \eqref{isolated-h-0}.}

\medskip

\noindent The estimation of the first two expressions on the 
right-hand side of \eqref{isolated-h-0} uses Lemma \ref{L-ocjena-L1} (see 
Remark \ref{Lp-bound} as well). Consequently, we need to derive 
estimates for the symbols in the space 
$C(\R_\lambda; C^{d+2}(\R^{d+1}_\mxi))$. Since both $\Psi_K$ 
and $\rho$ are present in all terms, the estimates should 
only be derived for $|\mxi| \in \supp(\Psi_K)$ and 
$\lambda \in \supp(\rho)$.

Since $|\mxi|\in\supp\Psi_K$ implies $\frac{1}{|\mxi|}\lesssim 2^{-K}$,
for $|\malpha|\leq d+2$ and $|\mxi|\in\supp\Psi_K$, we have 
(recall that $\mxi'=\mxi/|\mxi|$):
\begin{equation*}
|\pa^\malpha_\mxi \Psi_K(\mxi)|\lesssim 2^{-|\malpha| K} ,
	\quad |\pa^\malpha_\mxi \Phi_K(\mxi,\lambda)| 
	\lesssim_a 2^{-|\malpha|(1-\epsilon)K} ,
	\quad |\pa^\malpha_\mxi\mxi'|\lesssim 2^{-|\malpha| K}.
\end{equation*}
Here and elsewhere, $\lesssim X$ 
implies $\leq C X$ for a constant $C$ that 
remains constant regardless of the parameters present in $X$. Sometimes, 
we use subscripts on $\lesssim$ to emphasize 
which quantities the constant depends on. 
Hence, for $|\mxi|\in\supp\Psi_K$, 
$\lambda\in\supp\rho$, $1\leq |\malpha|\leq d+2$, and
$J\in \{K-1,K,K+1\}$, the following holds:
\begin{equation}\label{eq:first_alpha_big}
	\left|\pa^\malpha_\mxi\left(\Psi_K(\mxi)
	\Psi_J(\mxi)\Phi_J(\mxi,\lambda)
	\big( \, \xi_0'+\bigl\langle f(\lambda)
	\,|\,\tilde{\mxi}' \bigr\rangle\big)
	\sigma(\lambda)\tilde{\mxi}'\right)\right|
	\lesssim_{a,f} 2^{-|\malpha|(1-\epsilon)K} .
\end{equation}
On the other hand, for $|\malpha|=0$ we use the 
definition \eqref{eq:PsiJPhiJ} of the 
function $\Phi_J$, together with 
$\bigl\langle a(\lambda) \tilde{\mxi}'\,|\,\tilde{\mxi}' \bigr\rangle
= \absb{\sigma(\lambda) \tilde{\mxi}'}^2$, to obtain
\begin{equation}\label{eq:first_alpha_zero}
	\left|\Psi_K(\mxi)\Psi_J(\mxi)\Phi_J(\mxi,\lambda)
	\bigl(\xi_0'+\bigl\langle f(\lambda) \,|\, \tilde{\mxi}' 
	\bigr\rangle\bigr)\sigma(\lambda)\tilde{\mxi}'\right|
	\lesssim_f 2^{-\frac{1}{2}\epsilon K} .
\end{equation}

For the first term on the right-hand 
side of \eqref{isolated-h-0}, 
we are left with examining the function
$$
\frac{\tilde \Phi_\delta}{\sum_{L\in\N}\Psi_L \mathcal{L}_L}.
$$ 
Let us present the reasoning on $\tilde\Phi_\delta$, and 
then the conclusion for $(\sum_{L\in\N} \Psi_L\mathcal{L}_L)^{-1}$ 
will follow similarly.  
Obviously, $\tilde{\Phi}_\delta(\mxi,\lambda)$ is bounded 
by a constant independent of $\mxi,\lambda$, and $\delta$. 
Moreover, for the first-order partial derivatives, we 
have ($j=0,1,\ldots,d$)
\begin{align*}
	\pa_{\xi_j} \tilde\Phi_\delta(\mxi,\lambda)
	& = \frac{1}{\delta}\tilde\Phi' 
	\left(\frac{\sum_L \Psi_L\mathcal{L}_L}{\delta}\right)
	\\ & \qquad \times 
	\sum_{L=K-1}^{K+1} \Bigl((\pa_{\xi_j}
	\Psi_L)(\mxi)\mathcal{L}_L(\mxi,\lambda)
	+\Psi_L(\mxi)(\pa_{\xi_j}\mathcal{L}_L)(\mxi,\lambda)\Bigr).
\end{align*}
The expression under the summation sign is of the 
order $2^{-rK}$, as $|\mxi|^{1-r}$ appears 
in $\mathcal{L}_L(\mxi,\lambda)$ (see \eqref{eq:SymbolLJ}) 
and we recall \eqref{eq:r-eps-cond}. Therefore, 
$\pa_{\xi_j} \tilde\Phi_\delta(\mxi,\lambda)$ 
is of the order $\frac{1}{\delta}2^{-rK}$.
Now, whenever a derivative hits the expression under the 
summation sign, we obtain at least $2^{-(1-\epsilon)K}\leq 2^{-r K}$ 
(again recall \eqref{eq:r-eps-cond}). Similarly, when we consider 
$\tilde\Phi’\left(\frac{\sum_L\Psi_L \mathcal{L}_L}{\delta}\right)$, 
we get another $\frac{1}{\delta}2^{-r K}$ in the order. 
Since $\delta< 1$, we arrive at 
\begin{equation*}
	\left|\pa^\malpha_\mxi\tilde \Phi_\delta (\mxi,\lambda)\right| 
	\lesssim_{a,f} \frac{1}{\delta^{|\malpha|}}2^{-r|\malpha| K}.
\end{equation*}
Analogously, 
\begin{equation*}
	\left|\pa^\malpha_\mxi
	\left(\frac{1}{\sum_{L=K-1}^{K+1}\Psi_L
	\mathcal{L}_L}\right)(\mxi,\lambda)\right| 
	\lesssim_{a,f} \frac{1}{\delta^{1+|\malpha|}}
	2^{-r|\malpha| K},
\end{equation*}
where we used that 
$(\mxi,\lambda)\in\supp\tilde\Phi_\delta$. 
Therefore, employing the Leibniz product rule, for 
$0\leq |\malpha|\leq d+2$, we obtain
\begin{equation}\label{eq:first_tilde}
	\begin{split}
		&\left|\pa^\malpha_\mxi\left(
		\frac{\tilde\Phi_\delta}{\sum_{L=K-1}^{K+1}\Psi_L
		\mathcal{L}_L}\right)(\mxi,\lambda)\right| 
		\\ & \qquad \qquad
		\lesssim_{a,f} \frac{1}{\delta^{1+|\malpha|}}2^{-r|\malpha| K} 
		\lesssim_{a,f} \frac{1}{\delta}
		\left(1+\frac{1}{\delta^{d+2}}2^{-r(d+2)K}\right),
	\end{split}
\end{equation}
where the final term controls 
$\frac{1}{\delta^{1+|\malpha|}}2^{-r|\malpha|K}$ for all orders 
$0\le|\malpha|\le d+2$ by interpolating between the 
base case and the highest derivative.

Combining \eqref{eq:first_alpha_big}, 
\eqref{eq:first_alpha_zero}, and \eqref{eq:first_tilde}, 
Lemma \ref{L-ocjena-L1} yields the 
following bound for the first expression on the right-hand 
side of \eqref{isolated-h-0} (denoting 
$\beta=(\beta_1,\dots,\beta_d)$):
\begin{align}
	\Biggl\| & {\cal F}^{-1} \biggl(\int_{\R} \rho(\lambda)
	\big(\xi_0'+ \bigl \langle f(\lambda)
	\, | \, \tilde\mxi' \bigr\rangle\big) 
	\, \frac{\tilde{\Phi}_\delta(\cdot,\lambda)\,}
	{\sum_{L\in \N}\Psi_L \, {\cal L}_L(\cdot,\lambda)}
	\notag 
	\\ & \qquad\qquad\qquad\quad
	\times \Psi_K\, \Psi_J \, \Phi_J(\cdot,\lambda)
	\sum_{k,j=1}^d  \sigma_{kj}(\lambda) \frac{\xi_j}{|\mxi|}
	\, d\widehat{\beta}_k\biggr) \Biggr\|_{L^1(\R^{d+1})}
	\notag \\ & \quad 
	\lesssim_{a,f,\rho} \frac{1}{\delta} 
	\left(2^{-(1-\epsilon) K}+2^{-\frac{1}{2}\epsilon K}\right) 
	\left(1+\frac{1}{\delta^{d+2}}2^{-r(d+2)K}\right)
	\norm{\beta}_{L^1(\R^{d+1};\cM(\R)^d)}
	\notag
	\\ & \quad
	\lesssim \frac{1}{\delta^{d+3}} 
	\left(2^{-(1-\epsilon) K}+2^{-\frac{1}{2}\epsilon K}\right) 
	\norm{\beta}_{L^1(\R^{d+1};\cM(\R)^d)}.
	\label{beta}
\end{align}
This bound holds almost surely. 
Similarly, the second term on the right-hand 
side of \eqref{isolated-h-0} is estimated almost surely 
using Remark \ref{Lp-bound}. The final result is
\begin{equation}\label{2}
	\begin{split}
		&\Biggl\| {\cal F}^{-1}\biggl(\int_{\R} \rho(\lambda) 
		\frac{\tilde{\Phi}_\delta(\cdot,\lambda)\, \Psi_K\, \Psi_J 
		\big(\xi_0'+\bigl\langle f(\lambda)
		\,|\,\tilde\mxi' \bigr\rangle\big)
		\widehat{h}(\cdot,\lambda)}{|\mxi|^{r} 
		\sum_{L\in \N}\Psi_L \, {\cal L}_L(\cdot,\lambda)} 
		\, d\lambda \biggr)\Biggr\|_{L^1(\R^{d+1})}
		\\ &\,
		\lesssim_{a,f,\rho} \frac{1}{\delta}2^{-r K} 
		\left(1+\frac{1}{\delta^{d+2}}2^{-r(d+2)K}\right)
		\|h\|_{L^1(\R^{d+2})}
		\lesssim \frac{1}{\delta^{d+3}}2^{-r K} 
		\|h\|_{L^1(\R^{d+2})}.
	\end{split}
\end{equation} 

\medskip

\noindent\underline{The third and fourth terms
on the right-hand side of \eqref{isolated-h-0}.}

\medskip

\noindent To address the final two non-stochastic terms, we 
aggregate them. The combined symbol becomes 
$\frac{-1}{4\pi^2}\sum_{J=K-1}^{K+1} \Theta_J$, where
\begin{align}
	\Theta_{J}(\mxi,\lambda)
	& :=\frac{\Psi_K(\mxi) \Psi_J(\mxi)}{|\mxi|}
	\label{eq:ThetaJ-def}
	\pa_\lambda \Biggl(\, \frac{\rho(\lambda) 
	\tilde{\Phi}_\delta(\mxi,\lambda)}
	{\sum_{L=K-1}^{K+1}\Psi_L(\mxi) 
	\, {\cal L}_L(\mxi,\lambda)}
	\\ & \qquad\quad \times 
	\Bigl(|\mxi|^{-r} - 2\pi i \,\Phi_J(\mxi,\lambda) 
	\, \big(\xi_0' +\bigl\langle f(\lambda)
	\, | \, \tilde{\mxi}' \bigr\rangle
	\big)\Bigr) \Biggr).
	\notag
\end{align} 
Here, we utilized the fact that $\Psi_K$ and $\Psi_J$, where 
$J\notin \{K-1,K,K+1\}$, have disjoint support sets. 
After carrying out the differentiation in $\lambda$, we obtain the 
following result (note that we do not 
write arguments of functions, while $'$ denotes the 
derivative of functions; with the exception of 
$\mxi'=\mxi/|\mxi|$):
\begin{align}
	\label{Mcz-1}
	\Theta_{J} &=\frac{\Psi_K\,\Psi_J}{|\mxi|}
	\Biggl(\frac{\rho' \,\tilde{\Phi}_\delta}
	{\sum_{L}\Psi_L\, {\cal L}_L}
	\Bigl(|\mxi|^{-r} - 2\pi i \,\Phi_J \, 
	\big(\xi_0' +\bigl\langle f
	\, | \,\tilde{\mxi}'\bigr\rangle \big)\Bigr) 
	\\ &\qquad 
	+ \frac{\rho \,\frac{1}{\delta}(\tilde{\Phi}')_\delta
	\, \sum_L \Psi_L \pa_\lambda\mathcal{L}_L }
	{\sum_{L}\Psi_L \, {\cal L}_L} 
	\Bigl(|\mxi|^{-r} - 2\pi i \,\Phi_J \, 
	\big(\xi_0' +\bigl\langle f
	\, | \, \tilde{\mxi}'\bigr\rangle \big)\Bigr)
	\notag
	\\ & \qquad 
	- \frac{\rho \,\tilde{\Phi}_\delta\, 
	\sum_L \Psi_L \pa_\lambda\mathcal{L}_L}
	{\bigl(\sum_{L}\Psi_L \, {\cal L}_L\bigr)^2}
	\Bigl(|\mxi|^{-r} - 2\pi i \,\Phi_J \, 
	\big(\xi_0' +\bigl\langle f\, | 
	\, \tilde{\mxi}'\bigr\rangle \big)\Bigr) 
	\notag
	\\ & \qquad 
	- \frac{2\pi i\,\rho \,\tilde{\Phi}_\delta}
	{\sum_{L}\Psi_L\, {\cal L}_L}
	\Bigl( 2^{\epsilon J}(\Phi')_J \, 
	\bigl \langle a'\,\tilde\mxi'\, | \,\tilde\mxi' \bigr\rangle 
	\, \bigl(\xi_0'+\bigl\langle f(\lambda)
	\, | \, \tilde{\mxi}' \bigr\rangle \bigr)
	+ \Phi_J \, \bigl\langle f'\, | \, \tilde\mxi' \bigr\rangle
	\Bigr)\Biggr),
	\notag
\end{align}
where $(\Phi')_J$ and $(\tilde\Phi')_\delta$ are defined 
as in \eqref{eq:PsiJPhiJ} and \eqref{eq:Phidelta}, respectively, 
with $\Phi$ and $\tilde\Phi$ replaced by their corresponding derivatives.
As per Lemma \ref{l-meas} and Remark \ref{Lp-bound}, 
we have almost surely that
\begin{equation}\label{g-term}
	\begin{split}
		\left\| \mathcal{F}^{-1}\biggl(\int_\R 
		\Theta_J(\cdot,\lambda) \, d\widehat\zeta(\cdot,\lambda)
		\biggr)\right\|_{\mathcal{M}(\R^{d+1})}
		& = 
		\norm{\mathcal{A}_{\Theta_J}
		\zeta}_{L^1(\R_\lambda;\mathcal{M}(\R^{d+1}))}
		\\ & \lesssim_{\rho} 
		\norm{\Theta_{J}}_{C(\R; C^{d+2}(\R^{d+1}))} 
		\norm{\zeta}_{\mathcal{M}(\R^{d+2})}.
	\end{split}
\end{equation} 
Thus, we need to estimate the summands in \eqref{Mcz-1} 
with respect to the norm in $C(\R; C^{d+2}(\R^{d+1}))$, noting 
that $\Theta_J$ maintains compact support independent 
of $J$, thanks to $\rho$ and $\Psi_K$.

For any $\malpha\in\N_0^d$ where $\N_0=\N\cup \seq{0}$ and 
$0\leq |\malpha|\leq d+2$, it holds that
\begin{equation}\label{eq:deriv-symb-tmp}
	\left|\partial^\malpha_\mxi 
	\left(\sum\limits_{L=K-1}^{K+1}\Psi_L \, 
	\pa_\lambda {\cal L}_L\right)(\mxi,\lambda) \right| 
	\lesssim_{a,f} 2^{(1-r) K} .
\end{equation} 
Indeed, one only needs to recall condition \eqref{eq:r-eps-cond} 
and observe that the worst-case scenario occurs 
when $|\malpha|=0$. Finally, using this estimate in 
conjunction with the properties of the 
functions $\Psi_J$, $\tilde{\Phi}_\delta$, 
and $\Phi_J$ (notably, the estimates \eqref{eq:first_alpha_big}, 
\eqref{eq:first_alpha_zero} and \eqref{eq:first_tilde}), we 
derive from \eqref{Mcz-1} and \eqref{g-term} 
that a.s.~we have
\begin{equation}\label{g-term-1}
	\begin{split}
		&\sum_{J=K-1}^{K+1}
		\norm{\mathcal{A}_{\Theta_J}
		\zeta}_{L^1(\R_\lambda;\mathcal{M}(\R^{d+1}))}
		\\ & \qquad\quad
		\lesssim_{a,f,\rho} \frac{1}{\delta^2} 2^{-r K}
		\left(1+\frac{1}{\delta^{d+2}}2^{-r(d+2)K}\right) 
		\norm{\zeta}_{\mathcal{M}(\R^{d+2})}
		\\ & \qquad\quad
		\lesssim \frac{1}{\delta^{d+4}} 2^{-r K}
		\norm{\zeta}_{\mathcal{M}(\R^{d+2})},
	\end{split}
\end{equation} 
where we note that the expression \eqref{Mcz-1} 
for $\Theta_J$ contains a $\frac{1}{|\mxi|}$ term, 
leading to a $2^{-K}$ factor that 
effectively neutralizes the $2^K$ term 
from \eqref{eq:deriv-symb-tmp}.

\medskip

\noindent\underline{The fifth (stochastic) term on the right-hand 
side of \eqref{isolated-h-0}.} 

\medskip

\noindent From \eqref{diffusive1-final-ver} and 
\eqref{diffusive2-final-ver}, we will now consider 
the divergence form of $\cI(B\partial_\lambda h)$, 
which is $\partial_\lambda \cI(Bh) -\cI(\partial_\lambda Bh)$. 
This causes the final term in \eqref{isolated-h-0} 
to take the form
\begin{align*}
 	&\sum_{J=K-1}^{K+1}\int_{\R}
	\widehat{\cI(B \pa_\lambda h)}
	\, {\cal B}_{K,J,\delta}\, d\lambda
	\\ & \qquad 
	= \sum_{J=K-1}^{K+1}\int_{\R}
	\partial_\lambda \widehat{\cI(Bh)}
	\, {\cal B}_{K,J,\delta}\, d\lambda
	-\sum_{J=K-1}^{K+1}\int_{\R}
	\widehat{\cI(\partial_\lambda Bh)}
	\, {\cal B}_{K,J,\delta}\, d\lambda,
\end{align*}
where ${\cal B}_{K,J,\delta}$ is defined 
in \eqref{eq:calB-KJdelta}. 
Hence, we need to estimate 
\begin{equation}\label{eq:S1-S2}
	\begin{split}
		&{\cal S}_1:=E\int_{\R}\norm{{\cal F}^{-1}\left(
		\widehat{\cI(Bh)}
		\, \partial_\lambda {\cal B}_{K,J,\delta} 
		\right)}_{L^1(\R^{d+1})}\, d\lambda,
		\\ & 
		{\cal S}_2:=E\int_{\R}\norm{{\cal F}^{-1}\left(
		\widehat{\cI(\pa_\lambda Bh)}
		\, {\cal B}_{K,J,\delta}
		\right)}_{L^1(\R^{d+1})}\, d\lambda.
	\end{split}
\end{equation}

Before we begin, let us 
mention that that estimating the stochastic integral 
terms \eqref{eq:S1-S2} becomes more intricate due 
to the unbounded domain $\R^d$, primarily because it 
necessitates the use of a fractional chain rule formula in 
the style of Kato-Ponce. This analysis would be simpler 
if conducted on the torus $\mathbb{T}^d$ 
(as done in \cite{Gess:2018ab}).

Let us take a look at the most challenging term, 
${\cal S}_1$, which we divide into two parts: 
one where $\xi_0^2$ is close to zero (using $\Psi_0$), 
and another where $\xi_0^2$ is away from zero 
(using $1-\Psi_0$). This 
gives us ${\cal S}_1 = {\cal S}_{1,1} 
+ {\cal S}_{1,2}$, where
\begin{align*}
	& {\cal S}_{1,1}:=E\int_{\R}\norm{{\cal F}^{-1}\left(
	\widehat{\cI(Bh)}
	\, \Psi_0(\xi_0^2)
	\, \partial_\lambda {\cal B}_{K,J,\delta} 
	\right)}_{L^1(\R^{d+1})}\, d\lambda,
	\\ & 
	{\cal S}_{1,2}:=E\int_{\R}\norm{{\cal F}^{-1}\left(
	\widehat{\cI(Bh)}
	\, (1-\Psi_0(\xi_0^2))
	\, \partial_\lambda {\cal B}_{K,J,\delta} 
	\right)}_{L^1(\R^{d+1})}\, d\lambda.
\end{align*}
Note that the function $\xi_0\mapsto \Psi_0(\xi_0^2)$ 
(see \eqref{functions}) is supported on $\abs{\xi_0}<\sqrt{2}$. 
Moreover, the presence of the function $\Psi_K$ in 
${\cal B}_{K,J,\delta}$ implies 
that $\abs{\xi_0} \le \abs{\mxi} 
\lesssim 2^K$. The function 
$\xi_0\mapsto 1-\Psi_0(\xi_0^2)$ 
is supported on $\abs{\xi_0}>1$, 
recalling that $\Psi_0\equiv 1$ on $(0,1)$ (which 
follows from \eqref{functions}).

Define $\psi_{1,1}(\mxi,\lambda):=
\Psi_0(\xi_0^2)\pa_\lambda {\cal B}_{K,J,\delta}(\mxi,\lambda)$.
Given Lemma \ref{L-ocjena-L1} 
and Remark \ref{Lp-bound}, we obtain
\begin{equation}\label{eq:S11-est1}
	\begin{split}
		{\cal S}_{1,1}
		&=E\int_{\R}\norm{\cA_{\psi_{1,1}(\cdot,\lambda)}
		\left(\cI(Bh)\right)}_{L^1(\R^{d+1})}
		\, d\lambda
		\\ &
		\lesssim \sup_{\lambda}
		\norm{\psi_{1,1}(\cdot,\lambda)}_{C^{d+2}}
		E\int_{\R}\norm{\cI(Bh)}_{L^1(\R^{d+1})}
		\, d\lambda.
	\end{split}
\end{equation}
The compact support of the mapping $(t,\mx)\mapsto \cI(Bh)$ 
implies that the norm of $\cI(Bh)$ in $L^1(\R^{d+1})$ is 
bounded by a constant times its norm 
in $L^2(\R^{d+1})$. 
Applying the BDG inequality \eqref{eq:BDG-Phi} (with $q=2$), 
keeping in mind that $\cI(Bh)$ has 
compact support in $t$, 
this $L^2$ term can be bounded as follows:
\begin{equation}\label{eq:S11-est1-L2}
	\begin{split}
		& E\int_{\R}\norm{\cI(Bh)}_{L^2(\R^{d+1})}
		\, d\lambda
		\lesssim \left( \int_{\R} 
		E \int_{\R}\norm{\cI(Bh)
		(t,\cdot,\lambda)}_{L^2(\R^d)}^2
		\,dt\, d\lambda \right)^{1/2}
		\\ & \qquad 
		\overset{\eqref{eq:BDG-Phi}}{\lesssim} 
		\left( \int_{\R}E\int_{\R} 
		\norm{h(t,\cdot,\lambda)}_{L^2(\R^d)}^2
		\,dt\, d\lambda \right)^{1/2}
		=\norm{h}_{L^2_{\omega,t,\mx,\lambda}}.			
	\end{split}
\end{equation}

We need to estimate the $C_\lambda C^{d+2}_\mxi 
:= C(\R;C^{d+2}(\R^{d+1}))$ norm of the symbol 
$\psi_{1,1}$ in \eqref{eq:S11-est1}. 
This symbol trivially takes the form 
$$\psi_{1,1}(\mxi,\lambda)
=\left(\xi_0'\Psi_0(\xi_0^2)\right)
\left(\frac{1}{\xi_0'}\pa_\lambda 
{\cal B}_{K,J,\delta}(\mxi,\lambda)\right),
$$ 
see \eqref{eq:xi-prime-def} 
for the definition of $\xi_0'$. 
By the Leibniz product rule,
\begin{align*}
	\norm{\psi_{1,1}}_{C_\lambda C^{d+2}_\mxi}	
	\lesssim 
	\norm{\xi_0'\Psi_0(\xi_0^2)}_{C^{d+2}_\mxi}
	\norm{\frac{1}{\xi_0'}\pa_\lambda
	{\cal B}_{K,J,\delta}
	(\mxi,\lambda)}_{C_\lambda C^{d+2}_\mxi}.
\end{align*}
We have
$$
\frac{1}{\xi_0'}\pa_\lambda
{\cal B}_{K,J,\delta}(\mxi,\lambda)
=\frac{i}{2\pi}\abs{\mxi}
\Theta_{J}(\mxi,\lambda),
$$
where $\Theta_J$ is defined in \eqref{eq:ThetaJ-def} 
(see also \eqref{eq:calB-KJdelta}, 
\eqref{Mcz-1}). Consequently, by 
reiterating our previous arguments 
for estimating $\Theta_J$ (leading to \eqref{g-term-1}), 
we arrive at
\begin{equation}\label{eq:S11-symb1}
	\begin{split}
		& \norm{\frac{1}{\xi_0'}\pa_\lambda
		{\cal B}_{K,J,\delta}
		(\mxi,\lambda)}_{C_\lambda C^{d+2}_\mxi}
		\lesssim 2^K 
		\norm{\Theta_J}_{C_\lambda C^{d+2}_\mxi}
		\\ & \qquad 
		\lesssim
		\frac{1}{\delta}\left( 2^{\epsilon K}
		+\frac{1}{\delta}2^{(1-r)K}\right)
		\left( 1+\left(\frac{2^{-rK}}{\delta}
		\right)^{d+2}\right)
		\lesssim 
		\frac{1}{\delta^{d+4}}2^{(1-r)K},
	\end{split}
\end{equation}
where the final inequality follows from 
the conditions $\epsilon\leq 1-r$ (see \eqref{eq:r-eps-cond}) 
and $\delta < 1$. Next, we observe that
\begin{equation}\label{eq:S11-symb2}	
	\norm{\xi_0'\Psi_0(\xi_0^2)}_{C^{d+2}_\mxi}
	\lesssim 2^{-K}.
\end{equation}
This results from the following easily verifiable facts:
\begin{align*}
	&\abs{\xi_0'\Psi_0(\xi_0^2)}
	=\abs{\frac{1}{\abs{\mxi}}\xi_0\Psi_0(\xi_0^2)}
	\le \frac{1}{\abs{\mxi}}\abs{\xi_0}
	\lesssim 2^{-K},
	\\ &
	\abs{\pa_{\mxi}^\malpha
	\! \left(\frac{1}{\abs{\mxi}}\right)}
	\lesssim 2^{-(\abs{\malpha}+1)K},
	\quad 
	\abs{\pa_{\mxi_0}^k
	\! \left(\xi_0\Psi_0(\xi_0^2)\right)}
	\lesssim 1, 
	\\ & \quad
	\Longrightarrow 
	\abs{\pa_{\mxi_0}^k\pa_{\tilde \mxi}^\malpha
	\! \left(\xi_0'\Psi_0(\xi_0^2)\right)}
	\lesssim 2^{-(\abs{\malpha}+1)K}\le 2^{-K},
	\quad k\in \N_0, \,\, \malpha\in \N_0^d,
\end{align*}
where we have used that $\abs{\xi_0}\lesssim 1$ 
and $\frac{1}{\abs{\mxi}}\lesssim 2^{-K}$,
as well as the definitions \eqref{eq:xi-prime-def-tmp}, 
\eqref{eq:xi-prime-def} of $\mxi_0'$, $\tilde \mxi$. 
The Leibniz rule, along with this, 
implies \eqref{eq:S11-symb2}. Combining \eqref{eq:S11-symb1} 
and \eqref{eq:S11-symb2} leads to the following:
\begin{align*}
	\norm{\psi_{1,1}}_{C_\lambda C^{d+2}_\mxi}	
	& \lesssim
	2^{-K}\frac{1}{\delta^{d+4}}2^{(1-r)K}
	=\frac{1}{\delta^{d+4}}2^{-rK},
\end{align*}
which---via \eqref{eq:S11-est1} and 
\eqref{eq:S11-est1-L2}---yields
\begin{equation}\label{eq:S11-est1-final}
	{\cal S}_{1,1}
	\lesssim \frac{1}{\delta^{d+4}}2^{-rK}
	\norm{h}_{L^2_{\omega,t,\mx,\lambda}}.
\end{equation}

Following that, let us 
estimate ${\cal S}_{1,2}$, which 
corresponds to the case where $\abs{\xi_0}>1$ 
and $\abs{\mxi}\sim 2^K$. Define 
\begin{align*}
	\psi_{1,2}(\mxi,\lambda)
	& :=(1-\Psi_0(\xi_0^2))
	\,\partial_\lambda 
	{\cal B}_{K,J,\delta}(\mxi,\lambda)
	= \xi_0'\,(1-\Psi_0(\xi_0^2))\,
	\frac{1}{\xi_0'}\pa_\lambda 
	{\cal B}_{K,J,\delta}(\mxi,\lambda).
\end{align*}
Let $\nu$ be any number from $(0,1/2)$, and note that
$$
\xi_0'=\frac{\xi_0}{\abs{\mxi}}
=\xi_0^\nu\, \frac{1}{\abs{\mxi}^\nu}
\left(\frac{\xi_0}{\abs{\mxi}}\right)^{1-\nu}
= \abs{\xi_0}^\nu \, \frac{1}{\abs{\mxi}^\nu}
\left(\frac{\abs{\xi_0}}{\abs{\mxi}}\right)^{1-\nu}
\sgn(\xi_0),
$$
where $\xi_0^\nu$ denotes the 
principal root of $\xi_0$ (i.e.,
if $\xi_0 = \abs{\xi_0} e^{i\theta}$, 
$\theta = \arg(\xi_0) \in (-\pi, \pi]$, then 
$\xi_0^\nu := \abs{\xi_0}^\nu e^{i \nu \theta}$).  
We can use these two expressions for $\xi_0'$ interchangeably. 
Additionally, the factor $\sgn(\xi_0)$ does not affect the 
calculations below. The reason is that 
the relevant $\xi_0$ values are bounded away from zero, 
with $\abs{\xi_0}>1$. Thus, we can 
rewrite $\psi_{1,2}$ as
\begin{align*}
	\psi_{1,2}(\mxi,\lambda)
	& =\left[\frac{1}{\abs{\mxi}^\nu}
	\left(\frac{\abs{\xi_0}}{\abs{\mxi}}\right)^{1-\nu}
	\!\!\!\sgn(\xi_0) \, (1-\Psi_0(\xi_0^2))\,
	\frac{1}{\xi_0'}\pa_\lambda 
	{\cal B}_{K,J,\delta}(\mxi,\lambda)\right]
	\, \abs{\xi_0}^\nu
	\\ & =:\tilde{\psi}_{1,2}(\mxi,\lambda)
	\, \abs{\xi_0}^\nu.
\end{align*}
By Lemma \ref{L-ocjena-L1} and Remark \ref{Lp-bound}, 
\begin{equation}\label{eq:S12-est1}
	\begin{split}
		{\cal S}_{1,2}&=E\int_{\R}
		\norm{\cA_{\tilde{\psi}_{1,2}(\cdot,\lambda)}
		\circ \cA_{\abs{\xi_0}^\nu}
		\left(\cI(Bh)\right)}_{L^1(\R^{d+1})}
		\, d\lambda
		\\ & \lesssim
		\sup_{\lambda}
		\norm{\tilde{\psi}_{1,2}(\cdot,\lambda)}_{C^{d+2}}
		E\int_{\R}\norm{\cA_{\abs{\xi_0}^{\nu}}
		\left(\cI(Bh)\right)}_{L^1(\R^{d+1})}\, d\lambda,
	\end{split}
\end{equation}
where we have utilized the fact that the 
norm of a multiplier operator corresponding 
to the product of two multipliers can be computed 
separately for each multiplier and then combined 
multiplicatively (cf.~\cite[Proposition 2.5.13]{Grafakos:2014aa}).

We need to estimate the $C_\lambda C^{d+2}_\mxi$ 
norm of $\tilde{\psi}_{1,2}$. By the Leibniz rule,
\begin{equation}\label{eq:tpsi-12-tmp}
	\begin{split}
		\norm{\tilde{\psi}_{1,2}}_{C_\lambda C^{d+2}_\mxi}
		&\lesssim 
		\norm{\frac{1}{\abs{\mxi}^\nu}
		\left(\frac{\xi_0}{\abs{\mxi}}\right)^{1-\nu}
		\!\! (1-\Psi_0(\xi_0^2))}_{C^{d+2}_\mxi}
		\\ & \qquad\quad \times
		\norm{\frac{1}{\xi_0'}\pa_\lambda
		{\cal B}_{K,J,\delta}
		(\mxi,\lambda)}_{C_\lambda C^{d+2}_\mxi},			
	\end{split}
\end{equation}
where \eqref{eq:S11-symb1} provides a bound on the 
second norm on the right-hand side. 
We have here removed the absolute value sign 
from $\xi_0$ and the sign function 
since the norm is effectively 
restricted to the set $\seq{\abs{\xi_0}>1}$. 

We assert that the first norm on the right-hand side 
of \eqref{eq:tpsi-12-tmp} satisfies $\lesssim 2^{-\nu K}$, 
taking into account that we are 
in the regime where $\abs{\xi_0}>1$ 
and $\abs{\mxi} \sim 2^K$. 
To verify this, one can check that
$$
\abs{\pa_{\mxi}^\malpha
\! \left(\frac{1}{\abs{\mxi}^\nu}\right)}
\lesssim 2^{-(\abs{\malpha}+\nu)K}\le 2^{-\nu K},
\quad \alpha\in \N_0^d. 
$$
In addition,
$\abs{\left(\frac{\xi_0}{\abs{\mxi}}\right)^{1-\nu}}
\lesssim 1$,
$$
\abs{\pa_{\xi_0}
\! \left(\frac{\xi_0}{\abs{\mxi}}\right)^{1-\nu}}
=\abs{(1-\nu) \frac{\abs{\mxi}^\nu}{\xi_0^\nu}
\left(\frac{1}{\abs{\mxi}}
-\frac{\xi_0^2}{\abs{\mxi}^3}\right)}
\lesssim 2^{\nu K} 2^{-K}=2^{-(1-\nu)K},
$$
and likewise  
$\abs{\pa_{\xi_i}
\! \left(\frac{\xi_0}{\abs{\mxi}}\right)^{1-\nu}}
\lesssim 2^{-(1-\nu)K}$ for any $i=1,\ldots,d$. 
In a similar manner, evaluating higher-order 
derivatives results in (as $\nu \in (0,1/2)$)
$$
\abs{\pa_{\mxi}^\malpha
\! \left(\frac{\xi_0}{\abs{\mxi}}\right)^{1-\nu}}
\lesssim 2^{-(1-\nu)\abs{\alpha}K}\lesssim 1, 
\quad \alpha\in \N_0^d.
$$
Furthermore, 
$\abs{(1-\Psi_0(\xi_0^2))}\lesssim 1$ 
and $\abs{\pa_{\xi_0}(1-\Psi_0(\xi_0^2))}
=\abs{\Psi_0'(\xi_0^2)2\xi_0}\lesssim 1$ 
(since $\Psi_0'(\cdot)$ is supported on $[1,2]$). 
Repeated differentiation leads to
$$
\abs{\pa_{\xi_0}^k(1-\Psi_0(\xi_0^2))}\lesssim 1, 
\quad k\in \N_0.
$$
In summary, while also utilizing 
the Leibniz rule,
$$
\norm{\frac{1}{\abs{\mxi}^\nu}
\left(\frac{\xi_0}{\abs{\mxi}}\right)^{1-\nu}
\!\! (1-\Psi_0(\xi_0^2))}_{C^{d+2}_\mxi}
\lesssim 2^{-\nu K}.
$$
Combining this with \eqref{eq:tpsi-12-tmp} 
and \eqref{eq:S11-symb1}, we arrive at
\begin{equation}\label{eq:tpsi-12-final}
	\norm{\tilde{\psi}_{1,2}}_{C_\lambda C^{d+2}_\mxi}
	\lesssim 2^{-\nu K}\frac{1}{\delta^{d+4}}2^{(1-r)K}
	=\frac{1}{\delta^{d+4}}2^{-(\nu+r-1)K}.
\end{equation}

Returning to \eqref{eq:S12-est1}, to proceed we must 
replace the $L^1$ norm (over an unbounded set) 
with the $L^2$ norm. Recall that the function 
$(t, \mx, \lambda) \mapsto \cI(Bh)$ is compactly supported, 
and thus $\cA_{\abs{\xi_0}^{\nu}} \left(\cI(Bh)\right)$ 
is compactly supported in $\mx \in \R^d$ (and in 
$\lambda$), though not in the time variable $t \in \R$. 
Choose a smooth, compactly supported function $g$ 
such that $g \equiv 1$ on the support of $\cI(Bh)$, 
and therefore
$$
\cA_{\abs{\xi_0}^{\nu}}
\left(\cI(Bh)\right)=
\cA_{\abs{\xi_0}^{\nu}}
\left(g\, \cI(Bh)\right).
$$
To handle the issue of non-compact support in $t$, we 
combine the compact support in $\mx$ 
and $\lambda$ with the fractional Leibniz rule 
from \cite[Theorem 1]{Grafakos:2014ab} 
in $t$, applied with $r = 1$, $p_1 = p_2 = q_1 = q_2 = 2$, 
$s = \nu$, $f = \cI(Bh)$, and $g = g$, 
to conclude that
\begin{align*}
	&\norm{\cA_{\abs{\xi_0}^{\nu}}
	\left(g\,\cI(Bh)\right)}_{L^1(\R^{d+1})}
	\\ & \qquad 
	\lesssim
	\int_{\R^d}\norm{\cA_{\abs{\xi_0}^{\nu}}
	\left(\cI(Bh)\right)}_{L^2(\R)}
	\norm{g}_{L^2(\R)}\, d\mx
	\\ & \qquad \qquad 
	+
	\int_{\R^d}\norm{\cI(Bh)}_{L^2(\R)}
	\norm{\cA_{\abs{\xi_0}^{\nu}}(g)}_{L^2(\R)}\, d\mx
	\\ & \qquad 
	\lesssim_g 
	\norm{\cA_{\abs{\xi_0}^{\nu}}
	\left(\cI(Bh)\right)}_{L^2(\R^{d+1})}
	+
	\norm{\cI(Bh)}_{L^2(\R^{d+1})},
\end{align*}
where the last term can be handled 
as in \eqref{eq:S11-est1-L2}. 
Regarding the first term, we proceed as follows:
\begin{align*}
	& E\int_{\R}\norm{\cA_{\abs{\xi_0}^{\nu}}
	\left(\cI(Bh)\right)}_{L^2(\R^{d+1})}
	\, d\lambda
	\\ & \quad 
	\lesssim
	\left( \int_{\R}
	E \int_{\R}\norm{\cA_{\abs{\xi_0}^{\nu}}
	\left(\cI(Bh)\right)
	(t,\lambda)}_{L^2(\R^d)}^2
	\,dt\, d\lambda \right)^{1/2}
	\\ & \quad 
	=\left( \int_{\R}
	E \int_{\R} \int_{\R^d}\abs{\xi_0}^{2\nu}
	\abs{\widehat{\cI(Bh)}
	(\xi_0,\tilde\mxi,\lambda)}^2
	\, d\tilde \mxi\,d\xi_0
	\, d\lambda \right)^{1/2}
	\\ & \quad 
	=:\left( \int_{\R} A(\lambda)
	\, d\lambda \right)^{1/2},
\end{align*}
where
\begin{align*}
	A(\lambda)&=E \int_{\R^d}\int_{\R}
	\abs{\cF_\mx\Bigl(\abs{\xi_0}^{\nu} 
	\cF_t\bigl(\cI(Bh)\bigr)\Bigr)
	(\xi_0,\tilde\mxi,\lambda)}^2
	\, d\xi_0\, d\tilde \mxi
	\\ & =
	E\int_{\R^d}\int_{\R}
	\abs{\xi_0}^{2\nu} 
	\abs{\cF_t\bigl(\cI(Bh)\bigr)
	(\xi_0,\mx,\lambda)}^2
	\, d\xi_0\, d\mx
	\\ & \le
	E\int_{\R^d}\norm{\cI(Bh)
	(\cdot,\mx,\lambda)}_{H^{\nu,2}(\R)}^2\, d\mx
	\\ & 
	= E\left[
	\norm{\cI(Bh)(\cdot,\cdot,
	\lambda)}_{H^{\nu,2}(\R;L^2(\R^d))}^2\right]
	\\ & 
	\overset{\eqref{eq:frac-Sob-Ito-ver1}}{\lesssim}
	E\int_\R\norm{h(t,\cdot,\lambda)}_{L^2(\R^d)}^2\,dt,
	\qquad \forall \lambda \in \R.
\end{align*}
Consequently, 
\begin{align*}
	&E\int_{\R}\norm{\cA_{\abs{\xi_0}^{\nu}}
	\left(\cI(Bh)\right)}_{L^2(\R^{d+1})}\, d\lambda
	\lesssim\left( \int_{\R} A(\lambda)
	\, d\lambda \right)^{1/2}
	\lesssim 
	\norm{h}_{L^2_{\omega,t,\mx,\lambda}}.
\end{align*}
Gathering our findings for \eqref{eq:S12-est1}, we 
can use this along with \eqref{eq:tpsi-12-final} 
to arrive at
\begin{equation*}
	{\cal S}_{1,2}\lesssim
	\frac{1}{\delta^{d+4}} 2^{-(\nu+r-1)K}
	\norm{h}_{L^2(\Omega\times \R^{d+2})}.
\end{equation*}
Combining \eqref{eq:S11-est1-final} with the 
previous result, we obtain the final estimate
\begin{equation}\label{eq:S1-finalest}
	{\cal S}_1 = {\cal S}_{1,1}+{\cal S}_{1,2}
	\lesssim \frac{1}{\delta^{d+4}}\left(2^{-rK}
	+2^{-(\nu+r-1)K}\right)
	\norm{h}_{L^2_{\omega,t,\mx,\lambda}}.
\end{equation}

The term ${\cal S}_2$ (see \eqref{eq:S1-S2}) 
can be estimated in the same manner, with the difference 
that $\cI(Bh)$ is substituted by $\cI(\partial_\lambda Bh)$. 
However, $\partial_\lambda B$ satisfies a condition 
similar to $B$, see \eqref{eq:dp-def-noise-pa-B}. 
In this scenario, there is no $\lambda$-derivative 
applied to the symbol ${\cal B}_{K,J,\delta}$, and
arguing as before we find that 
(compare with \eqref{eq:S11-symb1})
$$
\norm{\frac{1}{\xi_0'}{\cal B}_{K,J,\delta}
(\mxi,\lambda)}_{C_\lambda C^{d+2}_\mxi}
\lesssim 
\frac{1}{\delta} 
\left(1+\frac{1}{\delta^{d+2}}
2^{-r(d+2)K}\right)
\lesssim \frac{1}{\delta^{d+3}}.
$$
This eventually leads to 
\begin{equation}\label{eq:S2-finalest}
	{\cal S}_2 = {\cal S}_{2,1}+{\cal S}_{2,2}
	\lesssim 
	\frac{1}{\delta^{d+3}}2^{-K}
	\norm{h}_{L^2_{\omega,t,\mx,\lambda}}
	+
	\frac{1}{\delta^{d+3}} 2^{-\nu K}
	\norm{h}_{L^2_{\omega,t,\mx,\lambda}}.
\end{equation}

For future reference, let us summarize the findings 
\eqref{eq:S1-finalest} and \eqref{eq:S2-finalest} 
for the stochastic component of \eqref{isolated-h-0}, 
specifically for the terms \eqref{eq:S1-S2}:
\begin{equation}\label{stoch-integral-1}
	{\cal S}_1+{\cal S}_2\lesssim
	\frac{1}{\delta^{d+4}}
	\left(2^{-\nu K}+2^{-(\nu+r-1)K}\right)
	\norm{h}_{L^2(\Omega\times\R^{d+2})}.
\end{equation}

\medskip

\noindent\underline{Optimization of parameters.}

\medskip

\noindent We now aim to optimize the parameters 
$r$ and $\epsilon$ to minimize the exponent of $K$ in 
the estimates \eqref{beta}, \eqref{2}, \eqref{g-term-1} 
(deterministic parts), and 
\eqref{stoch-integral-1} (stochastic part). 
This optimization will yield the best possible regularity 
result, as will be explained below. Therefore, for 
$r, \epsilon \in (0,1)$ satisfying \eqref{eq:r-eps-cond}, 
our goal is to maximize
\begin{equation*}
	\min\Bigl\{1-\epsilon, \frac{\epsilon}{2},r,\nu+r-1\Bigr\} .
\end{equation*}
Since $1-\epsilon \geq r$ (see 
\eqref{eq:r-eps-cond}) and given that $r \geq \nu+r-1$, 
where $\nu < \frac{1}{2}$, the optimal solution is reached 
when $\epsilon = 1 - r$ and 
$\frac{1-r}{2} = \nu+r- 1$. 
This results in
\begin{equation}\label{eq:r-eps-chosen}
	r=1-\frac{2}{3}\nu , \qquad
	\epsilon=\frac{2}{3}\nu.
\end{equation} 
These values are permissible, with the optimal value 
being $\frac{\nu}{3}$. Therefore, in each of the estimates
\eqref{beta}, \eqref{2}, \eqref{g-term-1}, 
and \eqref{stoch-integral-1}, the 
exponent can be increased by up to $\frac{\nu}{3}$. 
Given that $\nu$ may assume any value within
$(0,\frac{1}{2})$, we deduce that the exponent can be 
increased by any $\gamma\in (0,\frac{1}{6})$.

Therefore, from \eqref{isolated-h-0}, by combining \eqref{beta}, 
\eqref{2}, \eqref{g-term-1}, 
and \eqref{stoch-integral-1}, 
for any $\gamma\in (0,\frac{1}{6})$
by choosing $\nu= \frac{3}{2}\gamma 
+\frac{1}{4}\in (3\gamma,\frac{1}{2})$, which fixes 
$r$ and $\epsilon$ by \eqref{eq:r-eps-chosen},
we get the following estimate in the Bessel potential 
space $H^{\gamma,1}(\R^{d+1})$:
\begin{align}
	& E\left[ \norm{\int_\R \mathcal{A}_{(1-\Psi_0)
	\tilde\Phi_\delta(\cdot,\lambda)}
	\bigl(\rho(\lambda)h(\cdot,\lambda)\bigr) 
	\, d\lambda}_{{H}^{\gamma,1}(\R^{d+1})}\right] 
	\notag 
	\\ & \quad
	\overset{\eqref{functions}}{=} 
	E\left[\norm{\sum_{K\geq 1} \int_{\R} 
	{\cal A}_{(1+|\mxi|^2)^{\frac{\gamma}{2}}
	\Psi_K \tilde{\Phi}_{\delta}(\cdot,\lambda)}
	\bigl( \rho(\lambda)h(\cdot,\lambda)\bigr) 
	\, d\lambda}_{L^1(\R^{d+1})}\right] 
	\notag \\ & \quad
	\lesssim 
	\frac{1}{\delta^{d+4}}
	\sum_{K\ge 1}\Biggl( 
	2^{-\left(1-\epsilon-\gamma\right) K}
	+2^{-\left(\frac{1}{2}\epsilon-\gamma\right)K}
	+2^{-\left(r-\gamma\right) K} 
	\notag\\ & \qquad\qquad\qquad\qquad \qquad
	+2^{-(\nu-\gamma) K}+2^{-(\nu+r-1-\gamma)K}
	\Biggr) A
	\notag \\ & \quad 
	\lesssim \frac{1}{\delta^{d+4}} A,
	\label{final-regular}
\end{align}
where
\begin{align*}
	A=A(h,\beta,\zeta) & :=
	\norm{h}_{L^1(\Omega\times\R^{d+2})}
	+\norm{\beta}_{L^1(\Omega\times \R^{d+1};\cM(\R)^d)}
	\\ & \qquad\qquad 
	+\norm{\zeta}_{L^1(\Omega;\cM(\R^{d+2}))} 
	+ \norm{h}_{L^2(\Omega\times\R^{d+2})}.
\end{align*}
Here, we used that $1-\epsilon=r\geq\frac{1}{3}\nu$, 
$\frac12\epsilon=\frac{1}{3}\nu$, 
and $\nu + r - 1 = \frac{\nu}{3}$, and 
noticed that the exponents of the five
$2^{-\left( \, \cdots\right)}$-terms are strictly negative because of the 
restriction $\nu > 3\gamma$. 
The implicit constant in the inequality $\lesssim$ 
depends on $f$, $a$, $\rho$, and $\gamma$. 

The function $1-\Psi_0$, which is supported 
away from the origin, is introduced into \eqref{final-regular} 
to account for the fact that the equation in \eqref{isolated-h-0} 
holds for $K\geq 1$ but not $K=0$. Of course, the choice of 
the cut-off does not affect estimate \eqref{final-regular}.
In particular, when applying this estimate in the upcoming 
Step V (interpolation), we will use $1-\Psi_0-\Psi_1$ 
in place of $1-\Psi_0$.

\medskip

\noindent\textbf{Step IV.} 

\medskip

\noindent Now, let us consider the situation where 
the modified symbol $\sum_{J\in \N} {\cal L}_J \Psi_J$ 
is close to zero. In this case, we will estimate 
$\cA_{1-\tilde\Phi_\delta}$ using the non-degeneracy 
condition \eqref{non-deg-stand},
where $1-\tilde\Phi_\delta$ is compactly supported 
(cf.~\eqref{eq:Phidelta})

First, observe that a multiplier operator with a smooth, 
compactly supported symbol $\psi$ generates an 
infinitely differentiable function, i.e., ${\cal A}_\psi:
L^1(\mathbb{R}^{d+1})\to C^\infty(\mathbb{R}^{d+1})$. 
Therefore, it suffices to examine the behavior of 
${\cal A}_{1-\tilde{\Phi}_\delta}$ outside an arbitrary 
neighborhood of the origin (with respect to $\mxi$). Hence, we 
will include the function $\tilde{\Psi}:=1-\Psi_0-\Psi_1$ 
in the symbol of the operator. 
It is only for simplifying technical reasons that we work 
with $1-\Psi_0-\Psi_1$ instead of $1-\Psi_0$ 
as in \eqref{final-regular}.

Let us recall that in the previous step, we chose and fixed the 
parameters $\gamma$, $\nu$, $r$, and $\epsilon$. Using the 
Plancherel formula and the H{\"o}lder inequality, 
we can easily obtain
\begin{equation}\label{aroundzeroA}
	\begin{split}
		&E\left[\, \norm{\int_{\R} {\cal A}_{\tilde\Psi
		(1-\tilde{\Phi}_{\delta}(\cdot,\lambda))}
		\bigl(\rho(\lambda)h(\cdot,\lambda)\bigr) 
		\, d\lambda}^2_{L^2(\R^{d+1})}\, \right]
		\\ & \qquad 
		=E\left[ \, \norm{\tilde\Psi \, \int_{\R}
		(1-\tilde{\Phi}_\delta(\cdot,\lambda))
		\rho(\lambda)\widehat{h}(\cdot,\lambda)
		\, d\lambda}^2_{L^2(\R^{d+1})}\, \right]
		\\ & \quad\quad 
		\lesssim_\rho \sup_{|\mxi|\geq 2}
		\meas\widetilde\Lambda_{2\delta}(\mxi) 
		\, E\norm{h}^2_{L^2(\R^{d+2})} ,
	\end{split}
\end{equation} 
where
\begin{equation}\label{eq:def-tilde-Lambda}
	\widetilde\Lambda_{2\delta}(\mxi)
	:=\seq{\lambda\in \supp(\rho) \, :\, 
	\sum\limits_{J\in \N} \Psi_J(\mxi) 
	{\cal L}_J(\mxi,\lambda) \leq 2\delta}.
\end{equation}
We will demonstrate that the measure of this set 
can be controlled using the non-degeneracy 
condition \eqref{non-deg-stand}. 

Let us fix $\mxi\in\R^{d+1}$ such that $|\mxi|\geq 2$. 
Then there exists $K\geq 2$ such that $\Psi_J(\mxi)=0$ 
for any $J\in\N_0\setminus\{K-1,K,K+1\}$ 
(see \eqref{functions}). In particular, 
$|\mxi|\geq 2^{K-2}$. Hence, for any 
$\lambda\in\supp(\rho)$, we have (recall the 
notations in \eqref{eq:xi-prime-def})
\begin{equation}\label{eq:nondeg-estimate}
	\begin{aligned}
		&\sum_{J\in \N} \Psi_J(\mxi){\cal L}_J(\mxi,\lambda)
		\\ & \qquad 
		= \big(\xi_0'+\bigl\langle f(\lambda)
		\,|\,\tilde\mxi'\bigr\rangle\big)^2
		\sum_{J=K-1}^{K+1} \Psi_J(\mxi)\Phi_J(\mxi,\lambda) 
		+ \bigl\langle a(\lambda){\tilde{\mxi}'}
		\,|\, \tilde{\mxi}' \bigr\rangle
		|\mxi|^{\frac{2}{3}\nu} 
		\\ & \qquad 
		\geq \big(\xi_0'+\bigl\langle f(\lambda)\,|\,\tilde\mxi'
		\bigr\rangle\big)^2 \Phi_{K+1}(\mxi,\lambda)
		\sum_{J=K-1}^{K+1} \Psi_J(\mxi) 
		+ 2^{\frac{2}{3}\nu(K-2)}
		\bigl\langle a(\lambda){\tilde{\mxi}'} 
		\,|\, {\tilde{\mxi}'} \bigr\rangle
		\\ & \qquad 
		\geq \big(\xi_0'+\bigl\langle f(\lambda)
		\,|\,\tilde{\mxi}'\bigr\rangle\big)^2 
		\Phi_{K+1}(\mxi,\lambda)
		+2^{\frac{2}{3}\nu(K-2)}
		\bigl\langle a(\lambda){\tilde{\mxi}'} 
		\,|\, {\tilde{\mxi}'}\bigr\rangle.
	\end{aligned}
\end{equation}
Now, we divide the analysis into two parts, depending 
on the value of $\lambda$. 

If $\bigl\langle a(\lambda){\tilde{\mxi}'} 
\,|\, {\tilde{\mxi}'}\bigr\rangle
>2^{-\frac{2}{3}\nu(K+1)}$, then 
$\sum_{J\in \N} \Psi_J(\mxi){\cal L}_J(\mxi,\lambda) 
\geq 4^{-\nu}>1/2$ (recall that $\nu<1/2$). 
Therefore, for $\delta<1/8$, 
we have 
\begin{equation}\label{eq:Lambda1}
	\widetilde\Lambda_{2\delta}(\mxi)\cap 
	\seq{\lambda\in\supp(\rho)\, :\, 
	\bigl\langle a(\lambda){\tilde{\mxi}'} 
	\,|\, {\tilde{\mxi}'} \bigr\rangle
	>2^{-\frac{2}{3}\nu(K+1)}}
	=\emptyset.
\end{equation}

On the other hand, if 
$\bigl\langle a(\lambda){\tilde{\mxi}'} 
\,|\, {\tilde{\mxi}'}\bigr\rangle
\leq 2^{-\frac{2}{3}\nu(K+1)}$, we have 
$\Phi_{K+1}(\mxi,\lambda)=1$.
Thus, using \eqref{eq:nondeg-estimate}, we deduce that
\begin{equation}\label{eq:Lambda2}
	\begin{aligned}
		&\widetilde\Lambda_{2\delta}(\mxi)
		\cap \seq{\lambda\in\supp(\rho) \, :\, 
		\bigl\langle a(\lambda){\tilde{\mxi}'} 
		\,|\, {\tilde{\mxi}'} \bigr\rangle
		\leq 2^{-\frac{2}{3}\nu(K+1)}}
		\\ & \quad \subseteq 
		\seq{\lambda\in\supp(\rho) \,:\, 
		\big(\xi_0'+\bigl\langle f(\lambda)\,|\,\tilde{\mxi}'
		\bigr\rangle\big)^2
		+\bigl\langle a(\lambda){\tilde{\mxi}'} 
		\,|\, {\tilde{\mxi}'}\bigr\rangle 
		\leq 2\delta}
		\\ & \quad =:\Lambda_{2\delta}(\mxi').
	\end{aligned}
\end{equation}
Therefore, in view of 
\eqref{eq:Lambda1} and \eqref{eq:Lambda2}, we obtain
$$
\sup_{|\mxi|\geq 2}\meas
\widetilde\Lambda_{2\delta}(\mxi)
\leq \sup_{|\mxi|=1}\meas
\Lambda_{2\delta}(\mxi)
\overset{\eqref{non-deg-stand}}{\lesssim}
\delta^\alpha.
$$

Referring back to \eqref{aroundzeroA}, 
when $\delta< 1/8$, we obtain
\begin{equation}\label{aroundzeroB}
	\left(E\left[ \, \norm{\int_{\R} 
	{\cal A}_{\tilde\Psi (1-\tilde{\Phi}_{\delta}(\cdot,\lambda))}
	\bigl(\rho(\lambda)h(\cdot,\lambda)\bigr) 
	\, d\lambda}^2_{L^2(\R^{d+1})} \, \right]\right)^{1/2}
	\lesssim_\rho \delta^{\alpha/2} \tilde{A}(h), 
\end{equation} 
where $\tilde{A}=\tilde{A}(h):=
\norm{h}_{L^2(\Omega\times\R^{d+2})}$.

\medskip

\noindent\textbf{Step V.} 

\medskip

\noindent Given \eqref{final-regular} and \eqref{aroundzeroB}, 
we are ready to apply the $K$-interpolation method 
(see, for instance, \cite[Appendix C]{Hytonen:2016aa}). 
This method constructs intermediate spaces between 
two Banach spaces. This is done via 
the Peetre $K$-functional.  The procedure is 
quite standard (it is already used in this 
context, for example, in \cite[p.~1496]{Tadmor:2006vn}; 
see also \cite[pp.~2516-2518]{Gess:2018ab}), but for 
completeness and to ensure the correct order of 
the smoothing effect, we present the 
argument in some detail. 

The relevant functions for decomposition 
in our case are the velocity averages. 
$$
H(\omega,t,\mx)
:=\int_{\R} {\cal A}_{\tilde\Psi}\bigl(\rho(\lambda) 
h(\omega,\cdot,\cdot,\lambda)\bigr)(t,\mx) 
\,d\lambda,
$$
where we recall that 
$\tilde\Psi=1-\Psi_0-\Psi_1$ 
(see \eqref{functions}). The so-called 
$K$-functional of $H$ is then defined by
$$
\mathcal{K}(\tau,H) := 
\inf\limits_{\substack{H_0\in L^2(\Omega\times\R^{d+1}),
\\ 
\, H_1\in L^1(\Omega; B^{\eta}_{1,1}(\R^{d+1})),
\\ 
H_0+H_1=H}}\Bigl( \|H_0\|_{L^2(\Omega\times\R^{d+1})}
+\tau\|H_1\|_{L^1(\Omega; B^{\eta}_{1,1}(\R^{d+1}))}\Bigr),
$$
where $\tau>0$, $\eta\in (0,\frac{1}{6})$, 
and $B^{\eta}_{1,1}(\mathbb{R}^{d+1})$ 
represents the $L^1$-based Besov space of functions 
whose smoothness is characterized by the parameter $\eta > 0$ 
(see \cite[Subsection 14.4]{Hytonen:2016ac}).
For a given fixed $\eta$, select $\gamma\in 
(\eta, 1/6)$. To effectively apply the estimate 
\eqref{final-regular}, it is crucial to recall 
the embedding $H^{\gamma,1}(\R^{d+1}) 
\hookrightarrow B^{\eta}_{1,1}(\R^{d+1})$. 
This result can be found in 
\cite[Proposition 14.4.18 and 
Theorem 14.4.19]{Hytonen:2016ac}.

By choosing $H_1=0$ (and $H_0=H$), we have a trivial 
estimate of $\mathcal{K}(\tau, H)$ 
that is independent of $\tau$:
\begin{equation*}
	\mathcal{K}(\tau,H) 
	\leq \norm{H}_{L^2(\Omega\times\R^{d+1})}
	\lesssim_\rho \norm{h}_{L^2(\Omega\times\R^{d+2})} 
	=\tilde{A} .
\end{equation*}
Thus, for any $\theta>0$,
\begin{equation}\label{eq:calK1}
	\sup_{\tau\in \left(C\frac{\tilde{A}}{A},\infty\right)} 
	\abs{\tau^{-\theta}\mathcal{K}(\tau,H)}
	\lesssim \left(\frac{A}{\tilde{A}}\right)^\theta 
	\tilde{A} = A^\theta \tilde{A}^{1-\theta},
\end{equation}
where $A$ and $\tilde A$ are provided at the end 
of steps III and IV, respectively, and $C>0$ is 
a fixed constant that we specify below.

On the other hand, for 
$\tau\in \left(0, C\frac{\tilde{A}}{A}\right)$, 
we take
$$
\delta^{\frac{\alpha}{2}+d+4}
:= \frac{\tau A}{\tilde{A}},
$$
where $C>0$ is specified such that $\delta< 1/8$, as 
it is required in Step IV (note that $C$ 
depends only on $\alpha$ and $d$). 
Thus, by \eqref{final-regular} 
and \eqref{aroundzeroB}, for
\begin{equation*}
	\theta_*=\frac{\alpha}{\alpha+2(d+4)} 
	\in (0,1),
\end{equation*}
we have
\begin{align*}
	\abs{\tau^{-\theta_*} \mathcal{K}(\tau,H)}
	& \leq 
	\tau^{-\theta_*}\norm{\int_{\R} 
	{\cal A}_{\tilde\Psi(1-\tilde{\Phi}_{\delta}(\cdot,\lambda))}
	\bigl(\rho(\lambda)h(\cdot\lambda)\bigr) 
	\, d\lambda}_{L^2(\Omega\times\R^{d+1})}
	\\ & \qquad 
	+ \tau^{1-\theta_*}
	\norm{\int_\R {\cal A}_{\tilde\Psi
	\tilde\Phi_\delta(\cdot,\lambda)}
	\bigl(\rho(\lambda)h(\cdot,\lambda)\bigr)
	\, d\lambda}_{L^1(\Omega;B^{\eta}_{1,1}(\R^{d+1}))}
	\\ & 
	\lesssim_{a,f,\rho,\eta}
	\tau^{-\theta_*}\delta^{\frac{\alpha}{2}} \tilde{A} 
	+ \tau^{1-\theta_*}\frac{A}{\delta^{d+4}}
	=2 A^{\theta_*}\tilde{A}^{1-\theta_*}.
\end{align*}
Since the right-hand side does not depend on $\tau$, we get
\begin{equation*}
	\sup_{\tau\in \left(0,C\frac{\tilde{A}}{A}\right)}
	\abs{\tau^{-\theta_*} \mathcal{K}(\tau,H)} 
	\lesssim_{a,f,\rho,\eta} A^{\theta_*}
	\tilde{A}^{1-\theta_*}.
\end{equation*}
This, combined with \eqref{eq:calK1}, yields 
\begin{equation*}
	\sup_{\tau>0}
	\abs{\tau^{-\theta_*}\mathcal{K}(\tau,H)} 
	\lesssim_{a,f,\rho,\eta} 
	A^{\theta_*}\tilde{A}^{1-\theta_*}.
\end{equation*}

Since $L^2(\Omega\times\R^{d+1})=L^2(\Omega; L^2(\R^{d+1}))
=L^2(\Omega; B^0_{2,2}(\R^{d+1}))$, we can apply 
\cite[Theorem 2.2.10]{Hytonen:2016aa} and 
\cite[Theorem 6.4.5(3)]{Bergh:1976aa} (see also 
\cite[p.~410]{Hytonen:2016ac}), which implies that 
$$
H\in L^{q_*}\bigl(\Omega; W^{s,q_*}(\R^{d+1})\bigr),
$$
where
$$
\frac{1}{q_*}:=
\frac{\theta_*}{1} + \frac{1-\theta_*}{2} 
= \frac{\alpha+d+4}{\alpha+2(d+4)}, 
\qquad s=\eta\theta_*.
$$ 
Since $\eta \in (0,1/6)$, by the local nature of 
the spaces (recall Step I where we have localized 
$h$ with respect to $\mx$) and the smoothing effect 
of Fourier multipliers with compactly supported and 
smooth symbols (see the discussion at the 
beginning of Step IV), we arrive at
$$
\int_\R \rho(\lambda) h(\omega,t,\mx,\lambda) 
\,d\lambda \in L^{q_*}
\bigl(\Omega;W^{s,q_*}_{\loc}(\R^{d+1})\bigr),
$$
for any $s\in (0,s_*)$, 
$s_*:=\frac{\alpha}{6\alpha+12(d+4)}$. 
This concludes the proof of 
Theorem \ref{t-regularity}.
\end{proof}

\section{Discussion}
\label{sec:discussions}

In this final section, we will discuss the regularity result 
and its underlying assumptions in the deterministic case 
(where $B \equiv 0$), and also show that it applies to 
a scenario not covered by \cite{Tadmor:2006vn}.

First, if $B\equiv 0$, in the final part of 
Step III of the proof of Theorem \ref{t-regularity}, it 
is optimal to select $r=\frac{1}{3}$ and 
$\varepsilon=\frac{2}{3}$. This modification improves 
the regularity exponent by a factor of 2. Specifically, 
in Theorem \ref{t-regularity}, instead of $s_*$, 
we would achieve $2s_*$.

\subsection*{(i) Non-degeneracy condition and Step IV} 
In Step IV, we demonstrated that the non-degeneracy 
condition \eqref{non-deg-stand} is 
sufficient to guarantee that (see 
\eqref{eq:def-tilde-Lambda} for the definition 
of $\widetilde\Lambda_{\delta}$)
\begin{equation}\label{eq:tilde-lambda-delta}	
	\sup_{|\mxi|\geq 2}\meas 
	\widetilde\Lambda_{\delta}(\mxi)
	\lesssim \delta^\alpha,
\end{equation}
which is essential for the argument. 
To demonstrate that the reverse implication is 
valid as well, we begin by assuming the 
existence of $\alpha>0$ 
and $\delta_0>0$ such that \eqref{eq:tilde-lambda-delta} 
is satisfied for $\delta\in (0,\delta_0)$. 
Consider $\meta\in\Rd$ where 
$|\meta|=1$, and let $\delta\in 
\bigl(0,\min\seqb{2^{-\frac{2}{3}}\delta_0,
\frac{1}{4}}\bigr)$. Then 
(see \eqref{eq:Lambda2} for the 
definition of $\Lambda_\delta$)
$$
\Lambda_\delta(\meta) 
= \Lambda_\delta(\meta) 
\cap \seq{\lambda\in\supp(\rho) 
:\bigl\langle a(\lambda)\tilde\meta
\,|\,\tilde\meta \bigr \rangle 
\leq \frac{1}{4}},
$$
i.e., for any $\lambda\in\Lambda_\delta(\meta)$ it holds
$\langle a(\lambda)\tilde\meta\,|\,\tilde\meta \bigr \rangle 
\leq \frac{1}{4}$.

Let us take $\lambda\in\Lambda_\delta(\meta)$ 
and define $\mxi:=2\meta$. Hence, $\mxi'=\meta$.
We have
\begin{align*}
	&\sum_{J\in \N} \Psi_J(\mxi) 
	{\cal L}_J(\mxi,\lambda) 
	\\ & \quad =\big(\xi_0'+ \bigl\langle f(\lambda)
	\,|\,\mxi' \bigr\rangle\big)^2 
	\sum_{J=1}^{3} \Psi_J(\mxi)\Phi_J(\mxi,\lambda) 
	+ \bigl\langle a(\lambda){\tilde{\mxi}'} 
	\,|\, {\tilde{\mxi}'} \bigr\rangle 2^{\frac{2}{3}\nu} 
	\\ & \quad = \big(\xi_0'+\bigl\langle f(\lambda)
	\,|\,\tilde{\mxi}'\bigr\rangle\big)^2 
	\sum_{J=1}^{3} \Psi_J(\mxi) 
	+ \bigl\langle a(\lambda){\tilde{\mxi}'} 
	\,|\, {\tilde{\mxi}'} \bigr\rangle 2^{\frac{2}{3}\nu} 
	\\ & \quad \leq 2^{\frac{2}{3}} 
	\Bigl(\big(\xi_0'+\bigl\langle f(\lambda)
	\,|\,\tilde{\mxi}'\bigr\rangle\big)^2
	+\bigl\langle a(\lambda){\tilde{\mxi}'} 
	\,|\, {\tilde{\mxi}'} \rangle \Bigr),
\end{align*}
where we used that $\Phi_J(\mxi,\lambda)=1$, $J\in \{1,2,3\}$, 
since $\langle a(\lambda)\tilde\mxi'\,|\,\tilde\mxi' \bigr \rangle=\langle a(\lambda)\tilde\meta\,|\,\tilde\meta \bigr \rangle 
\leq \frac{1}{4}$, and $\nu<1$.
Therefore, we get 
$$
\meas\Lambda_\delta(\meta) 
\leq \meas\widetilde\Lambda_{2^{2/3}\delta}(2\meta) 
\leq \sup_{|\mzeta|\geq 2}\meas
\widetilde\Lambda_{2^{2/3}\delta}(\mzeta)
\lesssim \delta^\alpha.
$$
Since the constant on the right-hand side does 
not depend on $\meta$, by taking the 
supremum over $|\meta|=1$ we get the claim. 

\subsection*{(ii) Homogenous versus nonhomogenous symbols} 

The symbol in \eqref{non-deg-stand} is homogeneous 
with respect to $\mxi$ (of order $2$). We chose to 
present the result with this form of the non-degeneracy 
condition because, with our method---specifically, using 
the additional condition \eqref{regularity}---we can 
transform an inhomogeneous second-order 
equation into a homogeneous one. In our view, this 
is an important point to emphasize, as techniques for 
studying homogeneous and inhomogeneous equations, 
such as those in \cite{Tadmor:2006vn}, 
often differ significantly.

However, as is evident from Corollary \ref{TT-ex} below, 
the form of the symbol in \eqref{non-deg-stand} is not optimal. 
Specifically, replacing 
$\langle f(\lambda)\,|\,\mxi\rangle^2$ 
with $\abs{\langle f(\lambda)\,|\,\mxi\rangle}$ 
would be preferable---and indeed, this modification is 
possible. More precisely, Theorem \ref{t-regularity} 
remains valid even with this adjustment to the 
non-degeneracy condition \eqref{non-deg-stand}. 

To see this, one only needs to remove 
$i\bigl(\xi_0'+\bigl\langle f(\lambda)
\,|\,\tilde{\mxi}' \bigr\rangle\bigr)$ 
in \eqref{eq:second_eg_mult}, and change 
$\mathcal{L}_J$, $\tilde\Phi_\delta$, 
and $\widetilde\Lambda_\delta(\mxi)$ to 
\begin{align*}
	& {\cal L}_J(\mxi,\lambda) 
	=i\bigl(\xi_0'+\bigl\langle f(\lambda)
	\,|\,\tilde{\mxi}'\bigr\rangle\bigr) 
	\, \Phi_J(\mxi,\lambda) 
	+\bigl\langle a(\lambda)\tilde{\mxi}'
	\,|\,\tilde{\mxi}'\bigr\rangle|\mxi|^{1-r},
	\\
	& \tilde\Phi_\delta(\mxi,\lambda)
	= \tilde\Phi\left(\frac{\abs{\sum_{J\in\N}
	\Psi_J(\mxi) \mathcal{L}_J(\mxi,\lambda)}^2}
	{\delta^2}\right),
	\\ 
	& \widetilde\Lambda_{\delta}(\mxi)
	=\seq{\lambda\in \supp(\rho)
	\,: \, \abs{\sum_{J\in \N} \Psi_J(\mxi)
	{\cal L}_J(\mxi,\lambda)}^2\leq \delta^2} ,
\end{align*}
and follow the proof of 
Theorem \ref{t-regularity} after that.

\subsection*{(iii) Our non-degeneracy 
condition versus \cite{Tadmor:2006vn}} 
In the regularity result for general symbols presented 
in \cite{Tadmor:2006vn}, the non-degeneracy conditions 
are composed of two parts: formulas (2.19) and (2.20) from the 
aforementioned paper. Our condition \eqref{non-deg-stand} 
can be related to the first one, which states that there 
exist positive constants $\alpha$ and $\beta$ and a finite 
interval $I\subseteq\R$ such that for every integer 
$J\ge 1$ and any positive number $\delta>0$, it holds that
\begin{equation}\label{c-TT}
	\sup\limits_{|\mxi|\sim J} 
	\meas\seq{\lambda \in I \, : \, 
	\abs{\xi_0+\bigl\langle f(\lambda) \,|\, \tilde{\mxi}
	\bigr\rangle}^2
	+\bigl\langle a(\lambda)\tilde{\mxi}
	\,|\, \tilde{\mxi}\bigr\rangle^2 \leq\delta^2}
	\lesssim \left(\frac{\delta}{J^\beta}
	\right)^\alpha. 
\end{equation} 

It is straightforward to see that \eqref{c-TT} 
implies \eqref{non-deg-stand}. To demonstrate this, consider 
any $\delta\in (0,1)$ and let $\mxi\in\S^d$ be 
arbitrary. We select $J\in\N$ such that \eqref{c-TT} 
holds. Given that $\delta<1$, it follows from
$\abs{\xi_0+\bigl\langle f(\lambda) \,|\, \tilde{\mxi}
\bigr\rangle}^2
+\bigl\langle a(\lambda)\tilde{\mxi}
\,|\, \tilde{\mxi}\bigr\rangle^2 \leq\delta^2$ that
\begin{align*}
	&\abs{\xi_0+\bigl\langle f(\lambda) 
	\,|\, \tilde{\mxi}\bigr\rangle}^2
	+\bigl\langle a(\lambda)\tilde{\mxi}
	\,|\, \tilde{\mxi}\bigr\rangle
	\geq \abs{\bigl\langle (1,f(\lambda)) 
	\,|\, \tilde{\mxi}\bigr\rangle}^2
	+\bigl\langle a(\lambda)\tilde{\mxi}
	\,|\, \tilde{\mxi}\bigr\rangle^2 
	\\ & \qquad 
	\geq \frac{1}{J^4}\left(\abs{\bigl\langle (1,f(\lambda)) 
	\,|\, (J\mxi) \bigr\rangle}^2
	+\bigl\langle a(\lambda)(J\tilde{\mxi})
	\,|\, (J\tilde{\mxi})\bigr\rangle^2\right).
\end{align*}
Thus,
\begin{align*}
	& \meas\seq{\lambda \in I \, :\, 
	\abs{\xi_0+\bigl\langle f(\lambda) 
	\,|\, \tilde{\mxi}\bigr\rangle}^2
	+\bigl\langle a(\lambda)\tilde{\mxi}
	\,|\, \tilde{\mxi}\bigr\rangle 
	\leq\delta}
	\\ & \quad \leq 
	\meas\seq{\lambda \in I \, :\, 
	\abs{\bigl\langle (1,f(\lambda)) 
	\,|\, (J\mxi)\bigr\rangle}^2
	+\bigl\langle a(\lambda)(J\tilde{\mxi})
	\,|\, (J\tilde{\mxi})\bigr\rangle^2 
	\leq J^4\delta} \lesssim \delta^{\alpha/2}.
\end{align*} 
This ensures that \eqref{non-deg-stand} is satisfied 
with coefficient $\alpha/2$, where the division by 
2 is a consequence of the homogeneous form of the symbol 
in \eqref{non-deg-stand} (see part (ii) of this section).

On the other hand, when $\delta$ is small, one can 
observe that \eqref{non-deg-stand} 
implies \eqref{c-TT} with the same 
$\alpha$ and $\beta = 2$. It is preferable for 
$\beta$ to match the order of the equation 
(here $\beta=2$ is achieved precisely because of 
the homogeneous nature of the non-degeneracy 
condition \eqref{non-deg-stand}). 
However, in \cite[Averaging Lemma 2.3]{Tadmor:2006vn}, it becomes 
necessary to also consider larger values of $\delta$ 
in \eqref{c-TT}, which is in contrast to the 
homogeneous setting discussed in Averaging Lemma 2.1 
of the same paper. Consequently, for a general $\delta > 0$, 
deriving from \eqref{non-deg-stand}, we find that 
on the right-hand side of \eqref{c-TT} we obtain 
$\bigl(\frac{\delta}{J^2}\bigr)^\alpha(1 + \delta^\alpha)$. 
This expression, however, proves insufficient for 
achieving the regularity result based 
on \cite[Averaging Lemma 2.3]{Tadmor:2006vn}.

To summarize, Theorem \ref{t-regularity} enables 
us to obtain a regularity result with more 
relaxed assumptions compared to 
\cite[Averaging lemma 2.3]{Tadmor:2006vn} (see 
also \cite[Theorem 3.1]{Gess:2018ab}). This is illustrated 
in Corollary \ref{TT-ex} below. Apart from the 
discussion above on the connection between 
\eqref{non-deg-stand} and \eqref{c-TT}, we do not 
need any information about the behavior of the 
symbol’s derivative with respect to the velocity 
variable $\lambda$, as needed in \cite[(2.20)]{Tadmor:2006vn}. 
We note that this last condition is 
not an insignificant restriction. While it can always be 
satisfied with suitable parameter choices (as 
discussed in \cite[Remark 2.5]{Tadmor:2006vn}), it is 
possible that the resulting regularity is zero. 
In our kinetic equations, this usually 
occurs when we must set $\beta=1$ in \eqref{c-TT}.  

\subsection*{(iv) The regularity exponents are not optimal.} 
The regularity order provided by 
Theorem \ref{t-regularity} is clearly not optimal and is inferior to 
\cite[Averaging lemma 2.3]{Tadmor:2006vn} 
and \cite[Theorem 3.1]{Gess:2018ab} (when the latter results 
are applicable; see part (iii) of this section). 
Our regularity exponent $s$ is 
significantly lower and dimension-dependent, compared to 
the results presented in the aforementioned papers.
The reason for this is that Step III of the proof of 
Theorem \ref{t-regularity} relies on more conventional 
``$L^1$ theory" of Fourier multiplier operators 
(as seen in Lemmas \ref{L-ocjena-L1} and \ref{l-meas}). It does not 
utilize the ``truncation property" introduced 
in \cite[Section 2.1]{Tadmor:2006vn}.  
Since our main objective was to achieve some 
degree of regularity in cases that were not addressed 
in previous studies, we chose to follow this method.

\subsection*{(v) An application} 
Let us conclude this section by illustrating the 
applicability of Theorem \ref{t-regularity} with the 
example provided in \cite[Corollary 4.5]{Tadmor:2006vn}. 
In this case, the result was not entirely satisfactory 
as the regularizing effect was demonstrated only 
for the parameters where $n \geq 2l$. However, we 
are able to prove the following (see 
also \cite{Nariyoshi:aa}).

\begin{corollary}\label{TT-ex} 
Let $n,l\in\N$ and consider the two dimensional 
convection-diffusion equation 
\begin{equation}\label{TT-1}
	\begin{split}
		\pa_t u &+\left( \frac{\pa}{\pa x_1}
		+\frac{\pa}{\pa x_2} \right) 
		\left(\frac{1}{l+1} u^{l+1}\right)
		\\ & 
		-\left(\frac{\pa^2}{\pa x_1^2}
		-2\frac{\pa^2}{\pa x_1 \pa x_2}
		+\frac{\pa^2}{\pa x_2^2} \right) 
		\left( \frac{1}{n+1}|u|^{n}u \right)=0,
	\end{split}
\end{equation} 
where $u=u(t,\mx)=u(t,x_1,x_2)$. 
This equation, augmented with the initial data 
$u_0$ from the space 
$L^1(\R^2)\cap L^\infty(\R^2)$, exhibits 
a regularization effect. Specifically, for any 
$q\in (1,q_*)$ and $s\in (0,2s_*)$, it is true that 
$$
u_0 \mapsto u\in  W_{\loc}^{s,q}(\R_+\times\R^2),
$$ 
where $u$ is the entropy solution to \eqref{TT-1} 
with initial data $u|_{t=0}=u_0$, and $q_*, s_*$ 
are the exponents given in Theorem \ref{t-regularity} for 
$\alpha=\min\seq{\frac{1}{2l},\frac{1}{n}}$ and $d=2$. 
\end{corollary}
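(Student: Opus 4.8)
The plan is to verify that the convection-diffusion equation \eqref{TT-1} falls within the scope of Theorem \ref{t-regularity} by checking the quantitative non-degeneracy condition \eqref{non-deg-stand} (in its optimal homogeneous form, or equivalently the unsquared form from part (ii) of the discussion) with the claimed exponent $\alpha=\min\{\frac{1}{2l},\frac{1}{n}\}$, and then invoking the deterministic improvement noted at the start of Section \ref{sec:discussions} — namely, since $B\equiv 0$, one may take $r=\frac13$, $\epsilon=\frac23$, which doubles the regularity exponent from $s_*$ to $2s_*$ (and leaves $q_*$ unchanged). Here $\mff(\lambda)=\bigl(\tfrac{1}{l+1}\lambda^{l+1},\tfrac{1}{l+1}\lambda^{l+1}\bigr)$, so $f(\lambda)=\mff'(\lambda)=(\lambda^l,\lambda^l)$ (or $|\lambda|^{l-1}\lambda$ when $l$ is even, but the key point is $f(\lambda)=(g(\lambda),g(\lambda))$ for a scalar $g$ with $g'$ not vanishing except possibly at $\lambda=0$), and $a(\lambda)=|\lambda|^n\,\begin{psmallmatrix}1&-1\\-1&1\end{psmallmatrix}$, which is symmetric positive semidefinite with square root $\sigma(\lambda)=|\lambda|^{n/2}\tfrac{1}{\sqrt2}\begin{psmallmatrix}1&-1\\-1&1\end{psmallmatrix}$ — note $\sigma$ is merely (locally) Hölder/Lipschitz for small $n$, but by the remark after Theorem \ref{thm:main-intro} only local Lipschitz continuity of $a$ is needed, which holds here.

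Next I would carry out the explicit computation of the symbol on $\S^2$. Writing $\mxi=(\xi_0,\xi_1,\xi_2)$ with $|\mxi|=1$, we have $\langle f(\lambda)\mid\tilde\mxi\rangle = g(\lambda)(\xi_1+\xi_2)$ and $\langle a(\lambda)\tilde\mxi\mid\tilde\mxi\rangle = |\lambda|^n(\xi_1-\xi_2)^2$. So the set to measure is
\begin{equation*}
	\Lambda_\delta(\mxi)=\bigl\{\lambda\in I : |\xi_0+g(\lambda)(\xi_1+\xi_2)|^2 + |\lambda|^n(\xi_1-\xi_2)^2 \le \delta\bigr\}.
\end{equation*}
The structure here is that the convection direction "sees" $\xi_1+\xi_2=:p$ while the diffusion sees $\xi_1-\xi_2=:m$, and these are linearly independent coordinates; on $\S^2$ at least one of $|\xi_0|$, $|p|$, $|m|$ is bounded below by a dimensional constant (in fact $\xi_0^2+\tfrac12 p^2+\tfrac12 m^2 = 1$). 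The argument splits according to which is large. If $|m|\gtrsim 1$, then $|\lambda|^n \lesssim \delta$ forces $|\lambda|\lesssim \delta^{1/n}$, giving $\meas\Lambda_\delta(\mxi)\lesssim \delta^{1/n}$. If $|m|$ is small so that the diffusion term gives no control, then necessarily $\xi_0^2+p^2\gtrsim 1$; if $|p|\gtrsim 1$ the convection term behaves like $|\xi_0/p + g(\lambda)|^2\lesssim \delta/p^2$, and since $g'(\lambda)=l|\lambda|^{l-1}$ (so $g$ is a monomial-type function), the sublevel set of $\lambda\mapsto|\xi_0/p+g(\lambda)|$ below $\delta^{1/2}$ has measure $\lesssim \delta^{1/(2l)}$ — this is the standard "polynomial non-degeneracy" estimate: a $C^1$ function whose derivative vanishes to order $l-1$ at one point has sublevel sets of size $\lesssim \delta^{1/(2l)}$ (one may cite Lemma-type bounds for sublevel sets of monomials, or argue directly by splitting near and far from the critical point $\lambda=0$). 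If instead $|p|$ is small too, then $|\xi_0|\gtrsim 1$ and $|\xi_0+g(\lambda)p|\gtrsim 1-o(1)>\sqrt\delta$ for $\delta$ small, so $\Lambda_\delta(\mxi)=\emptyset$. Combining the cases and taking the supremum over $\mxi\in\S^2$ yields $\sup_{|\mxi|=1}\meas\Lambda_\delta(\mxi)\lesssim \delta^{\min\{1/(2l),1/n\}}$, which is exactly \eqref{non-deg-stand} with $\alpha=\min\{\frac1{2l},\frac1n\}$.

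With \eqref{non-deg-stand} established, Theorem \ref{t-regularity} applies directly: the entropy solution $u$ with initial data $u_0\in L^1\cap L^\infty(\R^2)$ is also a kinetic solution (well-posedness on $\R^d$ in the deterministic case is covered by \cite{Bendahmane:2004go,Chen:2003td} as recalled after Definition \ref{def:kinetic-sol}), and $h=\En_{\lambda<u}$ satisfies the kinetic equation; taking $\rho\in C^1_c(I)$ with $\rho\equiv 1$ on the (compact, by $L^\infty$ bound) range of $u$, the velocity average $\overline{h}=\int h\rho\,d\lambda$ recovers $u$ up to a constant (more precisely $\int(\En_{\lambda<u}-\En_{\lambda<0})\rho\,d\lambda = u$ on the support after normalization), so the $W^{s,q}_{\loc}$ regularity of $\overline h$ transfers to $u$. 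Applying the deterministic $B\equiv 0$ improvement replaces $s_*$ by $2s_*$, and with $d=2$ and $\alpha=\min\{\frac1{2l},\frac1n\}$ we obtain $u\in W^{s,q}_{\loc}(\R_+\times\R^2)$ for all $q\in(1,q_*)$ and $s\in(0,2s_*)$, which is the assertion. The main obstacle is the case analysis for the non-degeneracy estimate — specifically, being careful that the two "directions" $\xi_1+\xi_2$ and $\xi_1-\xi_2$ are genuinely independent so the bad sets do not reinforce each other, and handling the sublevel-set bound for the monomial-type convection flux uniformly in the ratio $\xi_0/p$ (including the degenerate point $\lambda=0$ where $g'$ vanishes and $a$ vanishes simultaneously, which is precisely where the worst exponent $\min\{\frac1{2l},\frac1n\}$ comes from). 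Everything else is bookkeeping: the reduction from entropy to kinetic solution, the choice of cutoff $\rho$, and the deterministic exponent doubling are all either standard or explicitly stated in the excerpt.
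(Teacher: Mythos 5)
Your proposal is correct and follows essentially the same approach as the paper: verify the non-degeneracy condition \eqref{non-deg-stand} with $\alpha=\min\{\tfrac{1}{2l},\tfrac{1}{n}\}$ by a case analysis on the size of the components of $\mxi\in\S^2$, using the $\delta^{1/n}$ bound from the diffusion term in one region and a monomial sublevel-set estimate giving $\delta^{1/(2l)}$ from the convection term in the complementary region, then invoke Theorem \ref{t-regularity} together with the deterministic doubling $s_*\mapsto 2s_*$. The only presentational difference is that the paper performs the explicit change of variables $y_1=x_1+x_2$, $y_2=x_1-x_2$ in physical space to reduce to the symbol $(\xi_0+\lambda^l\xi_1)^2+|\lambda|^n\xi_2^2$ and then cites \cite[(3.7)]{Tadmor:2006vn} for the flux sublevel-set bound, whereas you work with $p=\xi_1+\xi_2$, $m=\xi_1-\xi_2$ directly in Fourier space and argue the sublevel-set bound from first principles — these are the same computation.
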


\begin{proof}
We first note that by introducing 
the linear change of variables 
$$
y_1=x_1+x_2, \quad y_2=x_1-x_2,
$$ 
we can rewrite \eqref{TT-1} in the form
\begin{equation*}
	\begin{split}
		\pa_t u +\frac{\pa}{\pa y_1}
		\left(\frac{1}{l+1} u^{l+1}\right)
		-\frac{\pa^2}{\pa y^2_2}\left(
		\frac{1}{n+1}|u|^{n}u 
		\right)=0,
	\end{split}
\end{equation*}
where $u=u(t,\my)=u(t,y_1,y_2)$. 
It is evident that demonstrating that the 
solution exhibits a regularizing 
effect in these new variables is sufficient. 

To establish the regularizing effect, we need to verify 
that the equation satisfies the non-degeneracy 
condition \eqref{non-deg-stand}, which is the only 
requirement of Theorem \ref{thm:main-intro} 
(via Theorem \ref{t-regularity}).
In the notation of our kinetic 
equation \eqref{diffusive-intro} 
(with $B\equiv 0$), we have
$$
f(\lambda)=\left(\lambda^l,0\right), 
\quad 
a(\lambda)=\diag\left(0,\abs{\lambda}^n\right).
$$
It is sufficient to show that the 
non-degeneracy condition is satisfied on 
$I:=[-M,M]$, $M:=\norm{u_0}_{L^\infty}$; by
the maximum principle a bounded 
entropy solution $u$ takes values in the interval 
$\bigl[-\norm{u_0}_{L^\infty},\norm{u_0}_{L^\infty}\bigr]$.
For $\delta>0$ and $\mxi=(\xi_0,\xi_1,\xi_2)
\in \S^2$, i.e., 
$\xi_0^2+\xi_1^2+\xi_2^2=1$, we study
\begin{align*}
	\Lambda_\delta(\mxi) &:=
	\seq{\lambda\in I \, : \, 
	\abs{\xi_0+\bigl\langle f(\lambda)
	\,|\,\mxi\bigr\rangle}^2
	+\bigl\langle a(\lambda)\mxi
	\,|\,\mxi \bigr\rangle \leq\delta} 
	\\ & 
	=\seq{\lambda\in I \, : \, 
	\left(\xi_0+\lambda^l\xi_1\right)^2
	+\abs{\lambda}^n\xi_2^2\leq\delta}.
\end{align*}

The aim is to prove that there is an exponent 
$\alpha>0$ such that $\meas\Lambda_\delta(\mxi)
\lesssim\delta^\alpha$ 
for small values of $\delta$, where the implicit 
constant in $\lesssim$ is independent of $\mxi$. 
We will consider two cases:
$$
({\rm a}) \quad 
{\xi_2^2} < \frac{1}{4}, 
\qquad 
({\rm b}) \quad 
\xi_2^2 \geq \frac{1}{4}.
$$ 
In the case (b), we clearly have 
\begin{equation}\label{(a)}
	\begin{split}
		\meas\Lambda_\delta(\mxi) 
		&\leq \meas\seq{\lambda \in I \, :\, 
		\abs{\lambda}^n {\xi_2^2}
		\leq \delta}
		\\ &
		\leq \meas\seq{\lambda \in I 
		\, :\, \abs{\lambda}^n\leq 4\delta}
		\lesssim \delta^{\frac{1}{n}}.
	\end{split} 
\end{equation} 	
	
In the case (a), we have
$$
\xi_0^2+\xi_1^2 > 3/4 .
$$ 
Neglecting the second order term we get
\begin{equation}\label{eq:cor-flux}
\meas\Lambda_\delta(\mxi) 
\leq
\meas\seq{\lambda \in I \, :\, 
	\abs{\xi_0+\lambda^l\xi_1} 
	\leq \sqrt{\delta}} .
\end{equation}
The term on the right hand side corresponds 
to the symbol of the one-dimensional 
scalar conservation law
studied in \cite[(3.7)]{Tadmor:2006vn}. 
Hence, we have 
\begin{equation}\label{(b)}
	\meas\Lambda_\delta(\mxi)
	\lesssim \bigl(\sqrt{\delta}\bigr)^{\frac 1l}
	= \delta^{\frac{1}{2l}}.
\end{equation}

Therefore, from \eqref{(a)} and \eqref{(b)}, 
we conclude that \eqref{non-deg-stand} holds with 
$\alpha=\min\seq{\frac{1}{2l},\frac{1}{n}}$. 
Using Theorem \ref{t-regularity} and the discussion
at the beginning of the section, we conclude 
the proof of the corollary.
\end{proof}

\begin{remark}
Let us comment on Corollary \ref{TT-ex}  
in a specific case where $n=l=1$ (which 
is not covered by \cite[Corollary 4.5]{Tadmor:2006vn}). 
Here, we have $\alpha = 1/2$, so the critical 
values for the parameters are given by 
$$
q_*=\frac{25}{13}, 
\qquad 
2s_*= \frac{1}{75}.
$$
On the other hand, following the approach described in {\bf (ii)}, 
one obtains a slightly better $\alpha$, namely, 
$\alpha=\min\seq{\frac{1}{l},\frac{1}{n}}$. This is due to the fact
that then in \eqref{eq:cor-flux} the parameter 
$\delta$ appears with power $1$ (i.e., without 
a square root). Thus, for this choice 
of parameters, we have $\alpha=1$, which leads to 
the new critical values
$q_*=\frac{13}{7}$ and $2s_*=\frac{1}{39}$.
\end{remark}

\section*{Acknowledgement}

This work of Darko Mitrovi\'{c} was supported 
in part by the project P 35508 of the Austrian 
Science Fund (FWF). The current address of Darko 
Mitrovi\'{c} is University of Vienna, 
Oskar Morgenstern Platz-1, 1090 Vienna, Austria. 
The work of Kenneth H. Karlsen was 
funded by the Research Council of Norway under 
project 351123 (NASTRAN).



\end{document}